\def\url@leostyle{%
  \@ifundefined{selectfont}{\def\UrlFont{\sf}}{\def\UrlFont{\small\ttfamily}}}
\newtheorem{theorem}{Theorem}
\newtheorem{proposition}{Proposition}
\newtheorem{lemma}{Lemma}
\newtheorem{cor}{Corollary}
\newtheorem{conjecture}{Conjecture}
\def\metrictwo#1{\langle\!\langle #1 \rangle\!\rangle}
\def\normtwo#1{|| #1 ||}
\def\conj#1{\overline{#1}}
\def\R{\mathbf{R}}
\def\N{\mathbf{N}}
\def\RP{\mathbf{RP}}
\def\C{\mathbf{C}}
\def\CP{\mathbf{CP}}
\def\l{\lambda}
\def\a{\alpha}
\def\b{\beta}
\def\e{\epsilon}
\def\g{\gamma}
\def\d{\delta}
\def\k{\kappa}
\def\t{\theta}
\def\w{\omega}
\def\into{\rightarrow}
\def\intersection{\cap}
\def\union{\cup}
\def\nbhd{{ neighborhood } }
\DeclareMathOperator{\im}{im}
\def\smallskip{\par\vspace{1mm}}
\def\medskip{\par\vspace{2mm}}
\def\bigskip{\par\vspace{3mm}}
\def\fr#1#2{\frac{#1}{#2}}
\def\smfr#1#2{\tfrac{#1}{#2}}
\def\m#1{\begin{bmatrix}#1\end{bmatrix}}
\def\thenumber{0}
\def\eq#1{\global\advance\equationcount by 1
   \def\thenumber{\number\equationcount}
                        {$$#1\eqno(\thenumber)$$}}
\def\nbhd{{neighborhood}} 
\def\Ri{Riemannian }
\def\R{I\!\! R}
 \newcommand{\dsty}{\displaystyle}
\begin{document}

\title[Brake-Syzygy]{From Brake to Syzygy}

\author{Richard Moeckel}
\author{Richard Montgomery}
\author{Andrea Venturelli}
\address{School of Mathematics\\ University of Minnesota\\ Minneapolis MN 55455}
\address{Dept. of Mathematics\\ University of California, Santa Cruz\\ Santa Cruz CA}
\address{Laboratoire d'Analyse non lin\'eaire et g\'em\'etrie \\ Universit\'e d'Avignon \\ Avignon (FR) }

\email{rick@math.umn.edu}
\email{rmont@count.ucsc.edu}
\email{andrea.venturelli@univ-avignon.fr}

\date{June 1, 2011 (Preliminary Version)}

\keywords{Celestial mechanics, three-body problem, brake orbits, syzygy}

\subjclass[2000]{70F10, 70F15, 37N05, 70G40, 70G60, 70H12}

\thanks{}

\begin{abstract}  
In the planar three-body problem, we study solutions with zero initial velocity (brake orbits).  Following such a solution until the three masses become collinear (syzygy), we obtain a continuous, flow-induced Poincar\'e map.  We study the image of the map in the set of collinear configurations and define a continuous extension to the Lagrange triple collision orbit.  In addition we provide a variational characterization of some of the resulting brake-to-syzygy orbits and find simple examples of periodic brake orbits.
\end{abstract}

\maketitle

\section{Introduction and Main Results.}\label{SecIntro}
This paper concerns the interplay between brake orbits and syzygies in the
 Newtonian three body problem.   
A {\em brake orbit}  is a   solution, not necessarily periodic, 
for which  the velocities of all three  bodies are zero  at some instant, the `brake instant'.  
Brake orbits have zero angular momentum and negative energy.
A  {\em syzygy} occurs when the three bodies
become collinear.  We will count binary collisions as  syzygies, but exclude triple collision. 

Lagrange \cite{Lagrange} discovered a  brake orbit which ends in   triple collision.  
The three bodies form an  equilateral triangle at each instant,   the  triangle shrinking homothetically to 
triple collision.   Extended over   its maximum interval of existence,   Lagrange's solution
explodes out of  triple collision, reaches   maximum size at the brake instant, and then shrinks back to triple collision. 

Lagrange's solution is the only negative energy, zero angular momentum solution without  syzygies
\cite{Mont, Mont2}.  In particular,   brake orbits have negative energy, and zero angular momentum,
and so all of them,   {\it except} Lagrange's,  suffer syzygies.    Thus we have a map taking a  
brake initial condition  to its first syzygy.  We call this map   the {\em syzygy map}.  
Upon fixing the energy and reducing by symmetries,  the domain and range of the   map are topologically  punctured 
open discs, the punctures corresponding to the  Lagrange orbit.  See  figure~\ref{fig_Hillspherical} where the map takes the top ``zero-velocity surface", or upper Hill boundary, 
of the solid Hill's region to the plane inside representing the collinear configurations.  The exceptional Lagrange orbit runs from the central point on the top surface to the origin in the plane (which corresponds to triple collision),  connecting the 
puncture  in the domain to the puncture in the range.   
\begin{theorem} 
\label{thm:cty} The syzygy map is continuous.  
 Its image  contains a neighborhood of the binary collision locus.  
  For a large open set of mass parameters, including equal masses,  
the map extends continuously to the puncture, taking the equilateral triangle  of Lagrange 
  to   triple collision. 
\end{theorem}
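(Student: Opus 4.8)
The plan is to treat the three assertions in turn, building on the standard reduction (center of mass, zero angular momentum, fixed negative energy) and on the cited fact \cite{Mont, Mont2} that Lagrange's homothetic solution is the unique syzygy-free orbit in this class. Throughout, let $z$ denote the signed area of the triangle spanned by the three bodies, so that the collinear (syzygy) set is exactly $\{z=0\}$ and the binary-collision locus sits inside it. Since brake orbits are time-symmetric about the brake instant, I pass freely between the forward flow from a brake configuration and the backward flow from a prescribed syzygy.

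\textbf{Continuity.} Away from triple collision the Newtonian vector field is smooth, and binary collisions are regularizable, so solutions depend continuously on the brake initial condition throughout the punctured disc. The only thing that can destroy continuity of the first-syzygy map is a \emph{tangential} first crossing of $\{z=0\}$, so the heart of Part~1 is a transversality lemma: along a brake orbit that is not itself collinear, every syzygy that is not a binary collision is transverse, i.e. $z=0 \Rightarrow \dot z\neq 0$. To see this, place the three distinct collinear bodies on the $x$-axis; then the three quantities $\dot z$, the center-of-mass condition $\sum m_i\dot y_i=0$, and the zero-angular-momentum condition $\sum m_i x_i\dot y_i=0$ are linear functionals of the transverse velocities $(\dot y_1,\dot y_2,\dot y_3)$, and a short computation shows their coefficient determinant is nonzero for distinct collinear positions. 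Hence $z=\dot z=0$ at a non-binary syzygy forces all $\dot y_i=0$; the configuration and velocity are then both collinear, the orbit lies in the invariant collinear manifold, and $z\equiv 0$ — impossible for a non-collinear brake orbit. Transversality plus the implicit function theorem then make the first-syzygy time and configuration continuous wherever the first syzygy is not a binary collision; the binary-collision case is absorbed into Part~2 via the regularized flow.

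\textbf{Image contains a neighborhood of binary collision.} Here I would shoot backward. Fix a collinear configuration $c$ close to a binary collision of, say, bodies $1$ and $2$. Near such $c$ the dynamics is dominated by the Kepler problem of the close pair, whose radial ejection--collision orbits are themselves brake orbits. I would therefore assign to $c$ an incoming velocity close to the radial collision velocity of the pair and integrate backward: the close pair expands to its apoapse (zero relative radial velocity) while the third body contributes a small smooth perturbation, so the backward orbit reaches a genuine brake configuration before $z$ can return to zero. This exhibits $c$ as a first syzygy. Promoting this to an open neighborhood is an open-mapping statement: the backward-shooting construction depends smoothly on $c$ and on the chosen velocity, and I expect the resulting map from syzygy data to brake configurations to be a local submersion, so the syzygy map is open near these binary-collision-ending orbits and its image swallows a full neighborhood of the collision locus.

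\textbf{Extension to the Lagrange puncture.} This is the part I expect to be the main obstacle, and where the mass hypothesis enters. The tool is the McGehee blow-up of triple collision, which replaces the singularity by an invariant \emph{collision manifold} carrying a gradient-like flow whose rest points are the central configurations: the equilateral (Lagrange) points and the collinear (Euler) points, the latter lying on $\{z=0\}$. The Lagrange homothetic brake orbit runs into a Lagrange rest point, so a brake configuration near equilateral produces an orbit that, after shrinking, shadows the collision manifold near that rest point and is then carried by the collision-manifold flow toward an Euler rest point — i.e. toward a syzygy occurring at arbitrarily small size, hence toward triple collision. Making this precise requires (a) a $\lambda$-lemma/shadowing argument along the unstable manifold of the Lagrange rest point, and (b) the global fact that the relevant branch of the collision-manifold flow actually connects Lagrange to Euler rather than escaping or returning without a syzygy. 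Condition (b) is exactly what the ``large open set of mass parameters'' guarantees: for equal masses the collision manifold and its heteroclinic structure are pinned down by symmetry, and the required connection persists on an open neighborhood in mass space. Continuity of the extension then follows by showing that the first-syzygy size tends to zero uniformly as the brake configuration tends to the equilateral triangle.
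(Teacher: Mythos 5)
Your Part 1 is sound and is essentially the paper's own argument: continuity reduces to transversality of the first crossing of $\{z=0\}$, and your determinant (which works out to $m_2m_3(x_3-x_2)^2+m_1m_3(x_1-x_3)^2+m_1m_2(x_2-x_1)^2>0$) is a fine substitute for the paper's shorter observation that $\{z=\dot z=0\}$ is the invariant collinear phase space, so a tangency would force the whole brake orbit to be collinear. The genuine gaps are in Parts 2 and 3.

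In Part 2 the step ``the backward orbit reaches a genuine brake configuration'' fails. At the apoapse of the close pair only the pair's relative radial velocity vanishes; the pair's transverse velocity and, more importantly, the third body's velocity have no reason to vanish at that instant. A brake configuration is a point of the two-dimensional zero-velocity set inside the five-dimensional energy manifold, so a single backward orbit generically misses it altogether; finding an incoming velocity at $c$ whose backward orbit hits it exactly is a two-parameter shooting problem, and solving that problem \emph{is} the content of this part of the theorem. ``I expect the map to be a local submersion'' assumes what must be proved, and no limiting exact solution is identified from which to perturb (the Kepler ejection orbit brakes the pair, not the third body). The paper proves this part by an entirely different, variational route: for each collinear $q_0$ it minimizes the Jacobi--Maupertuis action over paths from $q_0$ to the Hill boundary (Proposition \ref{exist-minim-hill}); the minimizer, run backward from the Hill boundary, is a brake orbit whose only syzygy is $q_0$ (the reflection argument in the proof of Theorem \ref{thm:variational}); and the key point --- that for $q_0$ near binary collision the minimizer is \emph{not} collinear --- comes from the strict inequality $d_{JM}(q_0,\partial Q_h)<d^{c}_{JM}(q_0,\partial Q_h)$ at collision points (rotating a hypothetical collinear minimizer about the collision ray strictly decreases the action, via Lemma \ref{mutual-distances}), propagated to a neighborhood by continuity of both distance functions (Lemma \ref{continuity-JM-dist}).

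In Part 3 your mechanism for producing the syzygy is backwards, and with it your reading of the mass hypothesis. The Euler rest points lie \emph{on} the collinear set; an orbit of the collision-manifold flow asymptotic to $E_{j+}$ approaches collinearity only as rescaled time tends to infinity and need never actually cross it. So ``carried toward an Euler rest point, i.e.\ toward a syzygy'' is precisely the scenario that threatens the theorem, not the mechanism that proves it, and a Lagrange-to-Euler heteroclinic connection (your condition (b)), were it to exist, would be the problem rather than the cure. The paper's mass restriction is not a global statement about heteroclinic structure but the local spectral ``spiraling assumption'': the eigenvalues of each $E_{j+}$ tangent to the collision manifold are non-real. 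This forces any orbit asymptotic to $E_{j+}$ to wind around the collinear manifold and hence cross $\{z=0\}$ again and again (Proposition \ref{prop_spiralsyzygy}), which is what guarantees that \emph{every} orbit in the unstable disk of $L_-$ reaches syzygy transversally at $r=0$ (Proposition \ref{prop_monotonecollisionsyzygy}); only after that does your shadowing/degree argument near $L_-$ run as described. Without replacing your condition (b) by the spiraling condition (or by something like Conjecture \ref{conj1}, which would rule out convergence to $E_{j+}$ entirely), the extension argument has a hole exactly at the orbits asymptotic to the Euler points.
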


{\bf Remark on the Range.}  Numerical evidence suggests that the syzygy map is not onto (see figure~\ref{fig_syzygyimage}).  The closure of its range lies strictly inside the collinear Hill's region.  A heuristic explanation for this is as follows.  The boundary of the domain of the syzygy map is the collinear zero velocity curve, i.e., the collinear Hill boundary.  Orbits starting on this curve remain collinear for all time and so are in a permanent state of syzygy.   Nearby, non-collinear orbits oscillate around the collinear invariant manifold and take a certain time to reach syzygy, which need not approach zero as the initial point approaches the boundary of the domain.  Thus the nearby orbits have time to move away from the boundary before reaching syzygy.  It may be that the syzygy map extends 
continuously to the boundary but we do not pursue this question here.

Collision-free syzygies come in three types, 1, 2, and 3
depending on which mass lies between the other two at syzygy.  Listing the  syzygy types   in temporal order
yields the  syzygy sequence of a solution. (The syzygy sequence of a  periodic  collision-free solution encodes
its free homotopy type, or  braid type.)    
In (\cite{Mont3}, \cite{Tanikawa})  the notion of syzygy sequence was used   as a topological sorting  tool for  the three-body problem.
(See also  \cite{Moore}  and \cite{Mikkola}.)   
  A ``stuttering orbit'' is a solution  whose syzygy sequence has a stutter, meaning that   
  the same
symbol  occurs twice in a row, as   in ``11'' ``22'' or `33''.   For topological and variational reasons,
one of us had believed that stuttering sequences were rare.  The theorem easily proves the contrary to be true.

\begin{cor} Within the   negative energy,  zero angular momentum phase space for the  
three body problem there is an open  and unbounded set corresponding to  
stuttering orbits. 
\end{cor}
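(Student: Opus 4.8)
The plan is to exploit the time-reversal symmetry of brake orbits. If $q(t)$ is a brake orbit with brake instant $t=0$, then $\dot q(0)=0$, so by uniqueness of solutions $q(-t)=q(t)$ and the entire orbit is symmetric about the brake. Consequently the oriented area $A(t)$ of the triangle spanned by the three bodies is an \emph{even} function of $t$. Suppose the first syzygy after the brake occurs at time $t_0>0$. Then $A(\pm t_0)=0$, there is no syzygy in the open interval $(-t_0,t_0)$, and, crucially, the configurations $q(t_0)=q(-t_0)$ coincide, so the syzygies at $\pm t_0$ are of the \emph{same} type $s\in\{1,2,3\}$. Hence every brake orbit other than Lagrange's (which, as recalled in the introduction, has no syzygy) automatically produces a stutter: its syzygy sequence contains the block ``$ss$''. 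This already proves that stuttering orbits exist in abundance; the real content of the corollary is that they fill an open, unbounded set.

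To obtain openness I need a robust, transverse version of this stutter. By Theorem~\ref{thm:cty} the image of the syzygy map contains a neighborhood of the binary collision locus, so I may select a brake orbit $\gamma_0$ whose first syzygy is a collision-free configuration lying close to a binary collision. Near a binary collision the collinear locus is crossed transversally, i.e.\ $A'(t_0)\neq 0$, and the syzygy type is locally constant; since $A$ is even it also crosses transversally at $-t_0$. Thus $\gamma_0$ exhibits two consecutive transverse syzygies of a single common type, say ``$11$'', with $A>0$ throughout $(-t_0,t_0)$.

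Next I thicken this to an open set. The brake datum $(q_0,0)$ lies in the zero-angular-momentum, negative-energy phase space $\{J=0,\ H<0\}$, since zero velocity forces $J=0$ and a bounded noncollision configuration forces $H<0$. For initial data $x$ in $\{J=0\}$ near $(q_0,0)$, the solution through $x$ remains $C^1$-close to $\gamma_0$ on the compact interval $[-t_0-\epsilon,\,t_0+\epsilon]$, away from collision, by continuous dependence on initial data. By the transversality of the two crossings and the implicit function theorem, the perturbed area function has exactly one transverse zero near each of $\pm t_0$ and stays positive in between, so the perturbed orbit again displays a consecutive ``$11$''. Therefore a full neighborhood of $(q_0,0)$ in $\{J=0,\ H<0\}$ consists of stuttering orbits, and this neighborhood is open in the zero-angular-momentum phase space. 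Note that these perturbed orbits need not be brakes at all; the stutter, free on the thin set of brakes by symmetry, persists to genuinely non-symmetric neighbors by transversality.

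Finally, unboundedness follows from the scaling symmetry $(q,p)\mapsto(\lambda q,\ \lambda^{-1/2}p)$, which sends solutions to solutions, preserves $J=0$, keeps $H<0$, and carries syzygies to syzygies of the same type. Letting $\lambda\to\infty$ spreads the open set of stutterers to arbitrarily large configurations, so the set is unbounded. The main obstacle is the second step: securing a transverse first syzygy of a definite type to anchor the whole argument, and ruling out a spurious extra syzygy slipping into $(-t_0,t_0)$ under perturbation. Both are dispatched by invoking Theorem~\ref{thm:cty} to land the first syzygy near a binary collision, where transversality and a locally constant type are automatic, together with $C^1$-continuous dependence on a fixed compact time interval.
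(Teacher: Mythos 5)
Your proposal is correct and takes essentially the same approach as the paper: the paper's proof also rests on the symmetry of a brake orbit about its brake instant (phrased there as running the orbit backwards from its first syzygy $q$ with velocity $-v$, so that $q$ is visited at two consecutive syzygies, giving a stutter whenever $q$ is collision-free), and it likewise gets openness from transversality of the crossing with the syzygy plane plus continuous dependence. The only point of divergence is unboundedness, which you obtain cleanly from the scaling symmetry $(q,p)\mapsto(\lambda q,\lambda^{-1/2}p)$, while the paper leaves it implicit in the unboundedness of the neighborhood of the binary collision locus supplied by Theorem~\ref{thm:cty}.
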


\begin{proof}[Proof of the corollary.]
 If  a collinear configuration $q$ is in the image of the syzygy map, and if $v$ is the velocity of the
  brake orbit segment at $q$, then by running this   orbit backwards,
  which is to say, considering  the solution with initial condition $(q, -v)$,
we obtain a   brake orbit whose next syzygy is $q$, with velocity $+v$.  This  brake orbit is a  stuttering orbit
as long as $q$ is not a collision point.    Perturbing initial conditions slightly cannot destroy    
stutters, due to transversality of the
orbit with the syzygy plane.  
\end{proof}

{\bf Periodic Brake Orbits.} In 1893  a mathematician named Meissel 
conjectured  that if    masses
in the ratio 3, 4, 5 are place at the vertices of a 3-4-5 triangle and let   go from rest
then the corresponding brake orbit is periodic.  Burrau \cite{Burrau}   reported the conversation  with
Meissel and performed a   pen-and-paper numerical tour-de-force which suggested the conjecture may be false. This
``Pythagorean three-body problem''  became a test case for numerical integration methods. 
Szehebely   \cite{Sz1} carried  the  integration    further and found the motion ends  (and begins) in an elliptic-hyperbolic escape
Peters  and  Szehebeley \cite{Sz2}  perturbed away
 from the Pythagorean initial conditions and with the help of Newton iteration found  a 
 periodic brake orbit.
  
Modern   investigations into periodic brake orbits in general Hamiltonian systems began with Seifert's \cite{Seifert} 1948  topological existence 
proof for 
the existence of such orbits 
for harmonic-oscillator type potentials.  (Otto Raul Ruiz coined the term ``brake orbits'' in 
 \cite{Ruiz}.)  
We will establish    existence of  periodic brake orbits in the three-body problem   
by  looking for  brake orbits which  hit the syzygy plane $C$ orthogonally.
Reflecting such an orbit yields a periodic brake orbit.   Assume the masses are $m_1=m_2=1, m_3>0$.  
Then there is an invariant isosceles subsystem of the three-body problem and we will prove:

\begin{theorem}\label{theorem_isosbrake}
For $m_3$ in an open set of mass parameters, including $m_3=1$, there is a periodic isosceles brake orbit which hits 
the syzygy plane $C$ orthogonally upon its 2nd hit (see figure~\ref{fig_isosHill}).
\end{theorem}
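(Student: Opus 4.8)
The plan is to combine the reversibility of Newton's equations with the reflection symmetry of the isosceles subsystem, reducing the theorem to a one-dimensional shooting problem. First I would set up reduced coordinates on the invariant isosceles submanifold: writing the two equal masses at $(\pm a, b)$ and $m_3$ at $(0,c)$ with $2b + m_3 c = 0$, the kinetic metric becomes diagonal in $(a,b)$, the type-$3$ syzygy plane is $C = \{b = 0\}$, binary collision of the equal pair is $\{a = 0\}$, and triple collision is the origin. The reflection $R:(a,b,\dot a,\dot b)\mapsto(a,-b,\dot a,-\dot b)$ is a symmetry of the flow fixing $C$, and because the metric is diagonal, ``hitting $C$ orthogonally'' means precisely $\dot a = 0$ at the crossing. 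I would also regularize the binary collisions (Levi-Civita type) so the flow is smooth across $\{a=0\}$, and keep the McGehee blow-up near the origin available to control triple-collision limits.

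Next I would establish the reflection lemma that converts orthogonal crossings into periodicity. A brake orbit has zero velocity at the brake instant, so reversibility forces $q(-t) = q(t)$. If in addition the orbit meets $C$ orthogonally at time $T$, then $q(T)$ is fixed by $R$ while $\dot q(T)$ is reversed by $R$, so the solution $R\,q(2T - t)$ shares the data of $q$ at $t=T$ and hence equals it; thus $q(t) = R\,q(2T - t)$. The two relations $q(t) = q(-t)$ and $q(t) = R\,q(2T-t)$ together give $q(t + 4T) = q(t)$, so the orbit is periodic of period $4T$, with brakes at $0, 2T$ and orthogonal syzygies at $T, 3T$. This reduces the theorem to producing a single isosceles brake orbit that meets $C$ orthogonally on its second crossing.

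Then comes the shooting argument. For fixed $m_3$ and fixed (by scaling) negative energy, the isosceles brake points form the zero-velocity curve, which I parametrize by the apex height $b_0 \ge 0$; the two homothetic triple-collision orbits — the collinear Euler orbit at $b_0 = 0$ and the equilateral Lagrange orbit at $b_0 = b_0^{\mathrm{Lag}}(m_3)$ — bound the relevant interval and reach no finite syzygy. For $b_0$ strictly between these values the released triangle first flattens transversally through $C$ (with $\dot b < 0$) and then crosses $C$ a second time; I would define the shooting function $F(b_0)$ to be the tangential velocity $\dot a$ at this second crossing, continuous in $b_0$ by continuous dependence together with transversality, and seek a zero. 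Producing the zero amounts to exhibiting two brake heights at which $F$ takes opposite signs and applying the intermediate value theorem; the reflection lemma then delivers the periodic orbit.

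The main obstacle is exactly this sign change, because both natural endpoints of the interval are degenerate triple-collision orbits. As $b_0 \to b_0^{\mathrm{Lag}}$ the orbit lingers arbitrarily long near triple collision before re-expanding along a nearly equilateral shape, while as $b_0 \to 0$ the second crossing becomes nearly tangential and also passes close to the origin; in both limits the sign of $\dot a$, and even the persistence of a genuine second crossing, must be extracted from the flow on the McGehee collision manifold (and, if the equal pair nearly collides between crossings, from the regularized binary collision). Showing that $F$ remains well defined and changes sign between the two ends — rather than blowing up or losing its second crossing — is the heart of the argument. Once a simple zero $b_0^\ast$ is located at $m_3 = 1$, openness follows from continuity of $F$ in the mass parameter and transversality, since a sign-changing zero persists under perturbation, yielding the asserted open set of masses containing $m_3 = 1$.
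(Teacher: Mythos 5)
Your reduction of Theorem~\ref{theorem_isosbrake} to a shooting problem---the reversibility/reflection lemma showing that a brake orbit meeting $C$ orthogonally closes up into a periodic orbit of period $4T$, followed by an intermediate-value argument along a curve of brake initial conditions---is exactly the skeleton of the paper's proof, and that part of your argument is correct. The gap is that you never establish the one fact that carries all the content: that the shooting function is defined along a suitable sub-arc of the brake curve and changes sign. You correctly identify this as ``the heart of the argument,'' but identifying the hard step is not the same as doing it. In the paper this step occupies essentially all of Section~\ref{SecPeriodicBrake}: the $w\ge 0$ part of the energy manifold is cut into regions $R_I$, $R_{II}$, $R_{III}$ by the walls $\theta=\theta^*-\pi$, $-\pi/2$, $-\theta^*$, $0$; Lemma~\ref{lemma_regioncrossing} (positive invariance of $R_I$, $R_{III}$, negative invariance of $R_{II}$, plus the estimate $|d\lambda/d\theta|\le w/2$ for $\lambda=\sqrt{2r+v^2}$ to exclude escape to infinity) pushes the brake curve across the regions; the image curve must be cut at its first intersection with the invariant surface $W^s(L'_-)$ so that the surviving sub-arc stays trapped in $R_{II}$; and the endpoints of the final curve in the wall $\{\theta=0\}$ are pinned at the points $(v_2,0)$ and $(v_3,0)$ where the unstable branches $\gamma\subset W^u(L_-)$ and $\gamma'\subset W^u(L'_-)$ of the collision-manifold flow hit that wall. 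The sign change is then precisely the admissibility condition $v_3<0<v_2$ (together with $v_1<0$), a nontrivial fact about the flow on the triple collision manifold proved for $m_3=1$ in Lemma~\ref{lemma_admissiblemasses} by explicit integral comparisons applied to (\ref{eq_dvdtheta}). Nothing in your proposal supplies this or a substitute for it; you cannot get the sign change ``for free,'' because both endpoint limits of any such family are triple-collision orbits, so the limiting values of the shooting function are exactly invariant-manifold data on the collision manifold that must be computed.

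A secondary but genuine problem is that your particular shooting family makes the missing step harder than the paper's choice. You shoot from brake points between the Euler and Lagrange shapes and count only crossings of $\{b=0\}$; the paper shoots from brake points between the Lagrange shape and binary collision, so the first syzygy is the regularized binary collision at $\theta=-\pi/2$ (which every orbit crosses automatically) and the second is the Euler crossing $\theta=0$, with the consequence that only \emph{first} crossings of $\gamma$ and $\gamma'$ ever need to be estimated. With your family: (i) at $b_0=0$ the orbit lies entirely inside $C$ (it is in permanent syzygy, not merely ``reaching no finite syzygy''), and nearby orbits shadow the Euler homothetic orbit, so the limit of your $F$ is governed by $W^u(E_-)$, i.e.\ by the possibly spiraling Euler restpoint---exactly the delicate situation the paper's setup avoids; (ii) at the Lagrange end your $F$ is controlled by the \emph{second} crossing of $\gamma'$ with $\{\theta=0\}$, which requires following that branch much longer on the collision manifold than the first-crossing estimates of Lemma~\ref{lemma_admissiblemasses}; (iii) your $F$ may fail to be defined on the whole interval, since after its first crossing an orbit can tend to Lagrangian triple collision and never return to $C$---the paper handles the analogous difficulty by cutting its curve at $W^s(L'_-)$, and you would need a similar device; and (iv) if the orbit you produce passes through a binary collision between its two Euler crossings (as the collision-manifold picture suggests happens near the Lagrange end), then---since binary collisions count as syzygies---the orthogonal crossing is the third hit of the syzygy set, and you would be proving a different statement than the one claimed. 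The natural repair is to adopt the paper's family and then carry out the region-crossing, trapping, and admissibility analysis described above.
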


Do their exist brake orbits, besides Lagrange's  whose 1st intersection with $C$ is orthogonal?
We conjecture not.  Let $I(t)$ be the total moment of inertia of the three bodies at time $t$.  
The metric on shape space  is such that away from triple collision, a curve orthogonal to $C$ must
have $\dot I  = 0$ at   intersection.    This non-existence conjecture 
would then follow  from  the validity of 
\begin{conjecture}\label{conj1}
$\dot I  (t)< 0$ holds along any brake orbit segment, from the brake time up to and including the time of   
1st syzygy.
\end{conjecture}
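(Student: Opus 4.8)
\medskip
\noindent\textbf{A strategy toward Conjecture~\ref{conj1}.}
The plan is to reduce the statement to strict monotonicity of the \emph{size} and then to fight for that monotonicity against the shape oscillation. Recall the Lagrange--Jacobi identity $\ddot I = 2U + 4H$, where $U = \sum_{i<j} m_i m_j / r_{ij} > 0$ is the Newtonian force function and $H < 0$ is the fixed energy. Writing $I = r^2$ with $r$ the mass-metric size of the configuration, we have $\dot I = 2 r \dot r$, so away from triple collision the conjecture is equivalent to $\dot r < 0$ on the segment from the brake instant $t_0$ to first syzygy $t_1$. At $t_0$ all velocities vanish, so $T=0$, hence $H = -U(t_0)$ and $\dot I(t_0)=0$ while $\ddot I(t_0) = 2U(t_0)+4H = -2U(t_0) < 0$. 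Thus $\dot I$ leaves $0$ strictly downward and is negative on a right-neighborhood of $t_0$; the entire problem is to prevent $\dot I$ from returning to $0$ before $t_1$.

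Suppose, for contradiction, that $\dot I$ has a first zero $t_* \in (t_0, t_1]$, so $\dot I < 0$ on $(t_0,t_*)$ and $\ddot I(t_*)\ge 0$. The identity then forces $U(t_*) \ge -2H = 2|H|$: a turnaround of the size can occur only at a configuration whose potential is at least twice the brake potential. Passing to McGehee-type coordinates $q = r\,s$, with $U = \tilde V(s)/r$ and $\tilde V > 0$ the shape potential on the shape sphere (whose only singularities $\tilde V \to +\infty$ sit at the three binary-collision points on the collinear equator $C$), the energy relation $\tfrac12(\dot r^2 + r^2\|\dot s\|^2) = H + \tilde V(s)/r$ together with Lagrange--Jacobi yields the reduced radial equation
\[
\ddot r = r\,\|\dot s\|^2 - \tilde V(s)/r^2 .
\]
At $t_*$ one has $\dot r = 0$ and $\ddot r \ge 0$, i.e. $r^3\|\dot s\|^2 \ge \tilde V(s)$, so a turnaround also demands that the shape move fast relative to the gravitational term. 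The task is to show this cannot happen while the shape still lies strictly in the open hemisphere bounded by $C$, i.e. before any syzygy.

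The heart of the matter is to rule out such a premature radial ``bounce.'' The condition $U(t_*)=\tilde V(s_*)/r_* \ge 2|H|$ splits into two regimes: either (a) the shape $s_*$ is close to a binary-collision point, so $\tilde V$ is large and $s_*$ lies near the equator $C$; or (b) the size $r_*$ is small, a deep collapse toward near--triple collision. In case (a) I would argue by transversality: a shape near $C$ is about to produce a syzygy, and one should show, using control of $\|\dot s\|$ from the energy relation and the transversality of brake orbits with $C$ (the same transversality used in the proof of the Corollary), that the equatorial crossing always precedes the radial turnaround, contradicting $t_* \le t_1$. In case (b) I would invoke the cited fact that Lagrange's homothetic orbit is the \emph{only} negative-energy, zero angular momentum, syzygy-free solution, and try to show that any non-Lagrange orbit collapsing deeply enough to meet $U \ge 2|H|$ must cross $C$ first.

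The decisive obstacle is precisely the coupling displayed in $\ddot r = r\|\dot s\|^2 - \tilde V/r^2$: a priori a large shape speed $\|\dot s\|$ can overcome the gravitational term and force a local minimum of $r$ while the shape is still off the equator, and the sign of $\ddot I$ (equivalently the size of $U$) is \emph{not} controlled by Lagrange--Jacobi alone. A complete proof would most likely require a genuine monotone (Lyapunov-type) quantity on the coupled size--shape system, or a sharp quantitative comparison guaranteeing that the shape's approach to $C$ always outraces any radial turnaround, uniformly over an open set of masses and all brake data. Producing such a quantity is exactly what I do not see how to do cleanly, which is why the statement is offered here as a conjecture rather than a theorem.
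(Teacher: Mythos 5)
You should know at the outset that the paper offers no proof of this statement: it is posed as Conjecture~\ref{conj1}, the authors state explicitly that their evidence for it is ``primarily numerical,'' and its only role in the paper is conditional (if true, it would remove the mass restriction in Theorem~\ref{thm:cty} and rule out non-Lagrange brake orbits meeting $C$ orthogonally at first syzygy). So there is no proof to compare yours against, and your closing judgment --- that the statement should remain a conjecture --- is precisely the authors' own position. Your preliminary reductions are correct and consistent with the paper's formalism: $\ddot I = 2U + 4H$ is the right Lagrange--Jacobi identity for this normalization of $I$; at the brake instant $\dot I(t_0) = 0$ and $\ddot I(t_0) = -2U(t_0) < 0$; a first zero $t_*$ of $\dot I$ in $(t_0,t_1]$ forces $U(t_*) \ge 2|H| = 2U(t_0)$; and away from triple collision the turnaround condition $\dot r = 0$, $\ddot r \ge 0$ reads $r^3\|\dot s\|^2 \ge \tilde V(s)$, in agreement with the radial equation in (\ref{eq_ELrxy}). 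Indeed the paper records the equivalent formulation, in the remark following Proposition~\ref{prop_collisionsyzygy}, that the conjecture asserts every orbit reaches syzygy with $v<0$.

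That said, your proposal is a strategy, not a proof, and the two arguments you sketch would fail as stated. In case (a), the transversality used to prove the Corollary on stuttering orbits concerns an orbit at a syzygy it has \emph{already} reached; it yields no lower bound on the speed at which a shape that is merely near the equator moves toward it, and nothing in the energy relation prevents the shape velocity there from being tangent to $C$ or even directed away from it. What is really needed is a comparison of two times --- time to first syzygy versus time to radial turnaround --- and while the paper's quantitative estimates (Proposition~\ref{prop_bounded}) bound the first, nothing bounds the second from below. In case (b), the theorem that Lagrange's solution is the only negative-energy, zero-angular-momentum solution without syzygies is purely qualitative: it guarantees a syzygy eventually but carries no timing information, so it cannot by itself give the ordering that is the entire content of the conjecture. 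A smaller point: your dichotomy is not exhaustive as stated, since the set $\{U \ge 2|H|\}$ is the region below half the Hill height and contains configurations of moderate size whose shape is far from any collision point, so a rigorous attack would have to handle more than the two extreme regimes you name. In short, you have a correct reformulation and a correct diagnosis of the obstruction (the absence of a monotone quantity coupling size and shape), but the gap you acknowledge is genuine --- and it is the same gap that led the authors to state this as a conjecture rather than a theorem.
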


Our evidence for conjecture \ref{conj1} is primarily numerical.   
If this conjecture is true then we can eliminate the restriction on the masses in theorem~\ref{thm:cty}.
See the remark following proposition \ref{prop_collisionsyzygy}.


{\bf Variational Methods.} We are   interested in the interplay between variational methods, brake orbits, and syzygies.
If the energy is fixed to be $-h$ then the natural (and oldest) variational principle to use 
is that often called the  Jacobi-Maupertuis action principle, described below in section
\ref{SecVariational}.   (See also  \cite{Birkhoff}, p. 37 eq. (2).)  The associated action functional
will be denoted $A_{JM}$ (see eq. (\ref{Jac-Maup})).  
Non-collision critical points $\gamma$ for $A_{JM}$ which lie in the Hill region are solutions to Newton's equations
with energy $-h$.  Curves inside the Hill region which minimize $A_{JM}$  
among all compenting curves in the Hill region connecting two fixed points, or two fixed subsets
will be called {\it JM minimizers}.

We gain understanding of
the  syzygy map by considering JM-minimizers connecting a   fixed syzygy   configuration $q$  to the Hill boundary.
\begin{theorem}
\label{thm:variational} (i).JM minimizers exist  from any  chosen  point $q_0$  in the interior of the Hill
boundary to the  Hill boundary.   These minimizers
are solutions.  When not  collinear,  a minimizer has   at most one syzygy:  $q_0$.   

(ii).There exists a neighborhood $\mathcal U$ of the binary  collision locus,
such that if  $q_0 \in \mathcal U$, then  the  minimizers  are not collinear. 

(iii).If $q_0$ is triple collision then the minimizer is unique up to reflection and is   
one half of the Lagrange homothetic brake solution. 
\end{theorem}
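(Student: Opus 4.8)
The plan is to work throughout with the Jacobi--Maupertuis metric $ds^2=2(U-h)\,\langle dq,dq\rangle$ on the Hill region $\{U\ge h\}$, where $\langle\,,\rangle$ is the mass metric and $U$ the Newtonian force function; away from collisions and from the Hill boundary $B=\{U=h\}$ its geodesics are exactly the energy $-h$ solutions, and $A_{JM}$ is its length. For existence in (i) I would run the direct method: take a constant-speed minimizing sequence of paths from $q_0$ to $B$; since the conformal factor is bounded on compacta the infimum of $A_{JM}$ is finite, and it is realized in the limit by Arzel\`a--Ascoli together with lower semicontinuity of length. Two analytic points need care. The metric degenerates on $B$ (where $U=h$), which I handle by noting that the limiting endpoint stays on the closed set $B$ and that the free-endpoint (orthogonality) condition forces the physical velocity to vanish there, so the minimizer reaches $B$ as a genuine brake. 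Interior collisions I would exclude by a Marchal-type local surgery (deflecting a colliding pair strictly shortens $A_{JM}$), so that the minimizer is collision-free in its interior and hence a smooth geodesic, i.e.\ a solution.

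For the ``at most one syzygy'' clause I would exploit the reflection $R$ that reverses the orientation of the triangle. Since $R$ preserves all mutual distances it is an isometry of the JM metric, its fixed locus is exactly the collinear set $C$, and therefore $C$ is totally geodesic; consequently a minimizer not contained in $C$ can meet $C$ only transversally. Suppose such a minimizer $\gamma$, from $q_0\in C$ to an endpoint $p\in B$, met $C$ again at an interior point $q_1$. Reflect the arc of $\gamma$ beyond $q_1$ by $R$: because $R$ fixes $q_1$ and preserves $B$, this produces a competitor $\gamma'$ from $q_0$ to $R(p)\in B$ of the \emph{same} length but with a genuine corner at $q_1$ (the normal component of the velocity is reversed, the tangential part is not). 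Rounding that corner strictly shortens $\gamma'$, contradicting minimality of $\gamma$. Hence $q_0$ is the only syzygy, which is exactly the statement that the minimizer, run backwards, is a brake orbit whose first syzygy is $q_0$.

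For (iii) I would pass to cone coordinates $q=r\,\sigma$ with $r=\sqrt{I}$ and $\sigma$ on the shape sphere, in which the JM metric reads $2\big(U(\sigma)/r-h\big)\big(dr^2+r^2\,d\sigma^2\big)$ and triple collision is the cone point $r=0$. Using the classical fact that the equilateral (Lagrange) shape is the global minimum $U_{\min}$ of the shape potential $U|_{I=1}$, I would bound the length of any path from $r=0$ to $B$ from below: discard the nonnegative $r^2\,d\sigma^2$ term, replace $U(\sigma)$ by $U_{\min}$, and integrate the resulting radial integrand over $r\in[0,r_*]$, $r_*=U_{\min}/h$, an interval every such path must traverse. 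This gives $A_{JM}(\gamma)\ge A_{JM}$ of the Lagrange homothetic orbit, with equality forcing $\sigma\equiv$ Lagrange and $r$ monotone. Thus the minimizer is the homothetic Lagrange brake, unique up to the reflection $R$ interchanging the two equilateral poles; the same lower bound simultaneously yields existence at the cone point.

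Part (ii) is where I expect the real work. Since a minimizer is either collinear or meets $C$ only at $q_0$, it suffices to show that for $q_0\in C$ close to a binary collision of bodies $i,j$ the collinear minimizer is beaten by an explicit non-collinear competitor. The collinear minimizer is trapped on $C$ and must cross the region where $U\sim m_im_j/r_{ij}$ is large, paying length proportional to $\int\sqrt{U}$. The plan is to deflect the pair $i,j$ off the line so as to keep $r_{ij}$ bounded below, thereby cutting off the $r_{ij}^{-1/2}$ growth of the conformal factor, and to show that the resulting decrease in $\int\sqrt{2(U-h)}$ strictly outweighs the extra base length as $q_0\to$ collision. Making this comparison quantitative and uniform on a neighborhood $\mathcal U$ of the binary collision locus is the main obstacle; the delicate point is that both the collinear and the competing lengths stay bounded, so one must control their \emph{difference} rather than a blow-up, which I would do by a careful local estimate in regularized (Levi--Civita) coordinates near the collision.
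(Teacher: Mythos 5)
Your reflection argument for ``at most one syzygy'' and your radial comparison for part (iii) are essentially the paper's own proofs, and your appeal to a Marchal-type exclusion of interior collisions matches Lemma \ref{lemma:JM_Marchal} (with one caveat: Marchal's theorem concerns the fixed-time Lagrangian action, so the paper must first reparametrize the JM minimizer to energy $-h$ and show it minimizes ${\mathcal A}_{red}$ before Marchal applies; your ``local surgery directly on $A_{JM}$'' elides that transfer). The first genuine gap is in your existence argument for (i). Arzel\`a--Ascoli needs equicontinuity, hence a uniform length bound for the minimizing sequence in some fixed nondegenerate metric; a bound on $A_{JM}$ gives no such thing, precisely because the conformal factor $2(U-h)$ vanishes on the Hill boundary, where your curves must end: arcs hugging $\partial Q_h$ can have enormous $2K_0$-length and negligible JM action, and $Q_h$ is moreover unbounded. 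This is exactly what the Seifert tubular neighborhood (Proposition \ref{seifert-nbd}) is built for: it fibers a collar of $\partial Q_h$ by brake arcs, yields the lower bound $A_{JM}\ge\delta^{3/2}$ for any curve running from depth $\delta$ to the boundary, and gives $U\ge h+\alpha\delta$ outside the collar. The paper replaces the tail of each $\gamma_n$ by a brake arc (which can only decrease the action) and then converts the action bound into a genuine length bound via $A_{JM}\ge {\mathcal L}\sqrt{\alpha\epsilon}$, after which Hilbert's compactness theorem and lower semicontinuity finish the job. Without a substitute for this step, your limit curve need not exist.

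The second, larger gap is part (ii), which you yourself flag as ``the main obstacle.'' Your proposed mechanism --- deflect the binary so that $r_{ij}$ stays bounded below --- cannot work as stated, since every competitor must start at $q_0$, where $r_{ij}$ is already arbitrarily small; as you note, both actions remain bounded, and nothing in your sketch produces a sign for their difference, let alone uniformly in $q_0$. The paper avoids any uniform quantitative estimate by a two-step soft argument. First, exactly at a double-collision point $q_0$, a collinear minimizer is impossible: such a minimizer, being collision-free after $q_0$, stays in one collinear sector adjacent to $b_{12}$; rotate the entire path by a fixed angle $\delta$ about the collision ray $r_{12}=0$ (truncating at its first encounter with the Hill boundary). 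This fixes $q_0$, preserves kinetic energy (such rotations are isometries of the reduced metric), and, by the convexity and monotonicity of the shape potential on the circles $r_{12}=\mathrm{const}$ (Lemma \ref{mutual-distances}), strictly decreases $U$ pointwise, hence strictly decreases $A_{JM}$; thus $d_{JM}(q_0,\partial Q_h)<d^{c}_{JM}(q_0,\partial Q_h)$. Second, this strict inequality is propagated to a neighborhood not by estimates on minimizers but by continuity of the two value functions $q\mapsto d_{JM}(q,\partial Q_h)$ and $q\mapsto d^{c}_{JM}(q,\partial Q_h)$ (Lemma \ref{continuity-JM-dist}, whose proof requires the H\"older-type bound $d_{JM}(q_0,q_1)\le C\sqrt{|r_1-r_0|}+o(\sqrt{|r_1-r_0|})$ near partial collision): if collinear minimizers existed for a sequence $q_n\to \overline{q_0}$ in the collision locus, passing to the limit would force $d_{JM}(\overline{q_0},\partial Q_h)=d^{c}_{JM}(\overline{q_0},\partial Q_h)$, a contradiction. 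These two ideas --- the rotation trick at the collision locus and the continuity of the constrained and unconstrained JM distances --- are the missing content of part (ii).
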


{\bf Proof of the part of Theorem \ref{thm:cty} regarding the image. }    
Let $\mathcal U$ be the neighborhood of collision locus given by Theorem 3. If $q_0\in \mathcal U$, 
the minimizers of Theorem 3 realize non-collinear brake orbits whose first syzygy is $q_0$, 
therefore the image of the syzygy map contains $\mathcal U$. \qed
\vspace{2mm} \\ \noindent
An important step in the proof of theorem \ref{thm:variational} is of independent interest.

\begin{lemma} \label{lemma:JM_Marchal} [Jacobi-Maupertuis Marchal's lemma]  Given two points $q_0$ and $q_1$ 
in the Hill region, a JM minimizer  exists  for the fixed endpoint problem  of minimizing $A_{JM} (\gamma)$ among all paths
$\gamma$ lying in the Hill region and connecting $q_0$ to $q_1$. 
Any such minimizer is collision-free except possibly at its endpoint.
If a minimizer does not touch the Hill boundary (except possibly at one endpoint) then after  reparametrization 
it is   a solution with energy $-h$.  
\end{lemma}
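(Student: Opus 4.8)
The plan is to obtain the minimizer by the direct method, after disposing of the reparametrization invariance of the length functional $A_{JM}(\gamma)=\int\sqrt{2(U-h)}\,|\dot\gamma|\,dt$. By Cauchy-Schwarz on $[0,1]$ one has $A_{JM}(\gamma)^2\le \mathcal E(\gamma):=\int_0^1 2(U-h)\,|\dot\gamma|^2\,dt$, with equality exactly when $\sqrt{2(U-h)}\,|\dot\gamma|$ is constant; hence $\inf A_{JM}^2=\inf\mathcal E$ over $H^1$ paths joining $q_0$ to $q_1$ in the Hill region, and a minimizer of $\mathcal E$ is automatically a constant-speed minimizer of $A_{JM}$. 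I would then minimize $\mathcal E$: its integrand is nonnegative and convex in the velocity, so $\mathcal E$ is weakly lower semicontinuous on $H^1$, the Hill region $\{U\ge h\}$ is compact (closed and bounded, with collision configurations included at finite distance), and the embedding $H^1\hookrightarrow C^0$ keeps the endpoints fixed under weak limits. The one delicate point here is coercivity, since the conformal factor $2(U-h)$ vanishes on the Hill boundary and so $\mathcal E$ does not by itself control $\|\dot\gamma\|_{L^2}$ for paths that hug the boundary. I would circumvent this by passing to the induced length metric $d_{JM}$: the Hill region equipped with $d_{JM}$ is a compact length space --- the boundary degeneration being only of square-root order and the collision set lying at finite $d_{JM}$-distance --- so by the Hopf-Rinow theorem for length spaces it is geodesic and a minimizing path from $q_0$ to $q_1$ exists.

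The heart of the lemma, and the step I expect to be hardest, is that a minimizer has no collision at an interior time; this is the Jacobi-Maupertuis analogue of Marchal's lemma, and I would prove it by Marchal's averaging device. Suppose $\gamma$ had an interior collision. Localizing near the colliding cluster, where $U\sim c/r$ in the cluster size $r$, the integrand behaves like $\sqrt{2c/r}$, so a collision arc has finite $A_{JM}$-length and cannot be excluded on length grounds alone. Instead I would build a family of admissible comparison paths $\gamma_\xi$ that displace the collision point by $\varepsilon\,\xi$ with $\xi$ ranging over the unit sphere of the relevant factor, and show that the average over $\xi$ of $A_{JM}(\gamma_\xi)$ is strictly less than $A_{JM}(\gamma)$; then some $\gamma_\xi$ strictly lowers the action, contradicting minimality. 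The crux is precisely this averaged-decrease estimate, which rests on the strict mean-value inequality for the Newtonian kernel near collision, exactly as in Marchal's original fixed-time argument. An alternative I would keep in reserve is to transfer the known fixed-energy Marchal result through the Maupertuis correspondence together with a Levi-Civita regularization of binary collision, under which the JM length of a bounce is compared with that of a nearby rounded, collision-free arc. Endpoint collisions are not excluded, consistent with the statement, because binary-collision configurations satisfy $U=\infty\ge h$ and may well be the prescribed $q_0$ or $q_1$.

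Finally, for the regularity and the solution conclusion, I would argue locally. On any open subarc that stays in the interior of the Hill region and avoids collisions, the integrand $\sqrt{2(U-h)}$ is smooth and strictly positive, so the minimizer is a smooth geodesic of the Jacobi-Maupertuis metric $g_{JM}=2(U-h)\,g_{\mathrm{mass}}$ by the standard interior regularity of length minimizers in a smooth Riemannian metric. The Maupertuis principle then applies: a constant-speed $g_{JM}$-geodesic, after the time reparametrization $dt=ds_{\mathrm{mass}}/\sqrt{2(U-h)}$ that makes the mass-metric speed equal $\sqrt{2(U-h)}$, solves Newton's equations at energy $-h$. This yields the last assertion, the hypothesis that the minimizer avoid the Hill boundary (except at an endpoint) being exactly what keeps the conformal factor positive and the reparametrization nonsingular; a minimizer touching the boundary in its interior would have vanishing speed there, a would-be brake point at which the Maupertuis correspondence degenerates, which is why that case is set aside.
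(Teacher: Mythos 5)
Your existence argument has a genuine gap, and it occurs precisely at the point you try to wave away: compactness. The statement ``the Hill region $\{U\ge h\}$ is compact (closed and bounded)'' is false. For negative energy the three-body Hill region $Q_h=\{(r,x,y):0\le r\le V(x,y)/h\}$ is unbounded: since the shape potential $V$ blows up at the three binary-collision shapes, the region contains three infinite arms with $r\to\infty$ (the paper repeatedly refers to the ``unbounded three-armed region''). Your Hopf--Rinow fallback fails for a second, independent reason: because curves lying in the Hill boundary $\partial Q_h$ have zero JM action, $d_{JM}$ is only a pseudo-distance --- any two points of the connected boundary surface are at $d_{JM}$-distance zero --- so $(Q_h,d_{JM})$ is neither compact nor a metric space, and Hopf--Rinow for compact (or complete, locally compact) length spaces does not apply. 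The paper's proof is organized exactly around these two difficulties, using the Seifert tubular neighborhood of Proposition~\ref{seifert-nbd}: either $d_{JM}(q_0,q_1)\ge d_{JM}(q_0,\partial Q_h)+d_{JM}(q_1,\partial Q_h)$, in which case an explicit minimizer is built by concatenating two minimizers to the boundary with a free arc inside $\partial Q_h$; or the inequality is strict, in which case the Seifert estimate ($A_{JM}\ge\delta^{3/2}$ for any curve from the inner boundary of the tube to $\partial Q_h$) forces a minimizing sequence to stay in the set where $U-h\ge\alpha\epsilon>0$, so that the action bound yields a bound on the mass-metric length, ${\mathcal L}(\gamma_n)\le C/\sqrt{\alpha\epsilon}$. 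That length bound is what simultaneously defeats the boundary degeneration and the escape to infinity: curves of bounded length emanating from $q_0$ lie in a compact set, and Hilbert's compactness theorem plus lower semicontinuity of $A_{JM}$ in the Fr\'echet topology finish the argument. Nothing in your proposal substitutes for this dichotomy.

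The collision-exclusion step is also not proved in your proposal: you reduce it to ``the averaged-decrease estimate \ldots exactly as in Marchal's original fixed-time argument,'' but that estimate is the entire content of the claim, and transcribing Marchal's averaging from the fixed-time Lagrangian setting to the degenerate fixed-energy functional $A_{JM}$ (with comparison paths constrained to the Hill region) is not routine; you acknowledge this is the crux and then leave it open. The paper avoids redoing any averaging: on a subarc avoiding $\partial Q_h$ it reparametrizes the minimizer so that $K_0-U=-h$, uses the inequality $\sqrt{2}\,A_{JM}\le {\mathcal A}_{red}-h(b-a)$ (with equality exactly at energy $-h$) to conclude that the reparametrized arc is a fixed-endpoint, \emph{fixed-time} minimizer of the classical action, lifts it to a zero-angular-momentum path in $\C^2$, and then invokes Marchal's theorem as a known result to get both the absence of interior collisions and the fact that the arc is a true solution. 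Your ``alternative kept in reserve'' points in this direction, and your final Maupertuis-principle paragraph is consistent with it, but as written neither the existence half nor the collision-free half of the lemma is established.
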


\vskip .3cm 
 We can be more precise about   minimizers to binary collision  when two or all masses are equal.
 Let $r_{ij}$ denote the distance beween mass $i$ and mass $j$. 
 \begin{theorem} [Case of equal masses.] \label{thm:equal_masses}
 (a) If $m_1 = m_2$  and if the
 starting point $q_0$ is a collision point  with $r_{12} = 0$ then the minimizers of  
Theorem \ref{thm:variational}  
are isosceles brake orbits: 
 $r_{13} = r_{23}$ throughout the orbit.  
 
 (b) If $m_1 = m_2$   and if the starting collinear point $q_0$ is such that $r_{13}<r_{23}$ 
(resp. $r_{13}>r_{23}$), 
then a minimizer $\gamma$ of Theorem \ref{thm:variational} satisfies this same   inequality:  
at every point $\gamma(t)$ we have $r_{13}(t)<r_{23}(t)$ (resp. $r_{13}(t)>r_{23}(t)$).  

(c) If all  three masses are equal, and if $q_0$ is a collinear point, a minimizer $\gamma$  of Theorem 
\ref{thm:variational} satisfies the  same side length inequalities  as $q_0$ : 
if $r_{12} < r_{13} < r_{23}$
for $q_0$, then at every point  $\gamma (t)$ of $\gamma$  we have $r_{12} (t)< r_{13} (t)< r_{23}(t)$. 
\end{theorem}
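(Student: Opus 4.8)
The plan is to exploit the permutation symmetries of the equal-mass problem as isometries of the Jacobi--Maupertuis metric and to use the fact that a length-minimizer cannot be bent at a mirror. When $m_i=m_j$, the transposition $\sigma_{ij}$ interchanging bodies $i$ and $j$ preserves both the mass inner product and the force function $U$, hence the JM metric underlying $A_{JM}$, and it fixes the Hill boundary (the zero-velocity set) setwise. Its fixed-point locus is the isosceles wall $W_{ij}=\{r_{ik}=r_{jk}\}$, with $k$ the third index; being the fixed set of an isometry it is totally geodesic, and equivalently it is an invariant manifold of the Newtonian flow. I would first isolate the single structural ingredient behind all three parts.

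\textbf{Barrier Lemma.} Let $\gamma$ be a JM minimizer furnished by Theorem~\ref{thm:variational}, reparametrized as a solution of energy $-h$. Then (i) $\gamma$ cannot cross $W_{ij}$ transversally at an interior non-collision point, and (ii) if $\gamma$ meets $W_{ij}$ tangentially at an interior point, or if its Hill-boundary endpoint lies on $W_{ij}$, then $\gamma\subset W_{ij}$. For (i): if $\gamma$ crossed $W_{ij}$ transversally at an interior point $p$, I would reflect the portion after $p$ by $\sigma_{ij}$. Since $p\in W_{ij}=\mathrm{Fix}(\sigma_{ij})$, the reflected curve $\tilde\gamma$ is continuous, has the same $A_{JM}$ (an isometry preserves length), and, because the Hill boundary is $\sigma_{ij}$-invariant, is an admissible competitor for the same free-endpoint problem. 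But the incoming and outgoing velocities of $\tilde\gamma$ at $p$ differ by the sign of the $W_{ij}$-normal component, so $\tilde\gamma$ has a genuine corner at the smooth, interior, non-collision point $p$; rounding the corner strictly lowers $A_{JM}$, contradicting minimality. For (ii): a solution whose phase point lies in $TW_{ij}$ at one instant remains in the invariant manifold $W_{ij}$ for all time. Tangency at an interior instant places the phase point in $TW_{ij}$ there, and a brake (zero-velocity) endpoint on $W_{ij}$ has velocity $0\in TW_{ij}$; in either case $\gamma\subset W_{ij}$.

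Granting the lemma, parts (b) and (c) follow immediately. In (b), $q_0$ lies strictly on one side of $W_{12}$, say $r_{13}<r_{23}$. By (i) the minimizer never crosses $W_{12}$, so $r_{13}\le r_{23}$ persists; by (ii) it can neither touch $W_{12}$ tangentially at an interior time nor end on $W_{12}$, for either would force $\gamma\subset W_{12}$ and hence $r_{13}\equiv r_{23}$, contradicting strictness at $q_0$. Thus $r_{13}(t)<r_{23}(t)$ throughout. In (c), the region $\{r_{12}<r_{13}<r_{23}\}$ is exactly the open chamber cut out by the three walls $W_{23}=\{r_{12}=r_{13}\}$, $W_{12}=\{r_{13}=r_{23}\}$, $W_{13}=\{r_{12}=r_{23}\}$ of the full $S_3$ symmetry of the equal-mass system; applying the Barrier Lemma to each wall keeps $\gamma$ in the open chamber, which is precisely the asserted chain of strict inequalities.

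For part (a) the subtlety is that $q_0$ now sits on the wall itself: $r_{12}=0$ forces $r_{13}=r_{23}$, so $q_0\in W_{12}$, and I must show the minimizer does not leave $W_{12}$. By the Barrier Lemma the minimizer stays in one closed half-space, and if it ever re-meets $W_{12}$ tangentially at an interior point or at its endpoint it is trapped in $W_{12}$, which is the desired conclusion; so the only case to exclude is that $\gamma$ departs the collision and remains strictly on one side thereafter. This is where the real work lies. I would attack it through the Levi--Civita regularization of the binary collision: the regularized ejections from the equal-mass double collision form a circle of directions on which $\sigma_{12}$ acts, carrying the isosceles ejection as its distinguished fixed direction, and the claim is that the action to the Hill boundary is minimized only by that symmetric ejection. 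Two routes seem viable: show that the minimizing ejection direction is unique (whence it must be $\sigma_{12}$-fixed, the set of minimizers being $\sigma_{12}$-invariant), or produce a symmetric minimizer by a limiting argument from part (b) and then upgrade existence to the exhaustion of all minimizers. The main obstacle is exactly this selection statement at the collision, since the corner argument of the Barrier Lemma breaks down at the collision point, where the JM metric is singular; there a transversal passage through $W_{12}$ is no longer forbidden by minimality alone and must be ruled out by the finer dynamics of ejection.
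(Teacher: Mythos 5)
Your Barrier Lemma, and the way you deduce parts (b) and (c) from it, is essentially the paper's own argument: the paper also notes that an interior meeting of the minimizer with the isosceles set must be transverse (otherwise invariance of that set under the flow forces $\gamma$ to be isosceles throughout, contradicting the strict inequality at $q_0$), and then reflects the tail $\gamma|_{[\tau,T]}$ across the isosceles plane --- a symmetry of both the kinetic energy and of $U$ --- to produce a JM minimizer with a corner at $\gamma(\tau)$, contradicting the smoothness of minimizers; part (c) is obtained by applying (b) to the three isosceles planes. So for (b) and (c) you are on exactly the paper's route, and your lemma is sound.

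Part (a), however, has a genuine gap, which you have candidly flagged but not closed: when $q_0$ itself lies on the wall, symmetry and minimality only show that the minimizer either lies in $W_{12}$ or stays strictly on one side of it after ejection, and neither of your proposed repairs is an argument --- uniqueness of the minimizing ejection is not established (and nothing in Theorem \ref{thm:variational} provides it), while producing one symmetric minimizer by a limit from (b) would say nothing about the \emph{other} minimizers, which is what the theorem asserts. The paper's missing idea is a symmetrization rather than a reflection, and it kills the ``strictly one side'' case directly. Work in the spherical coordinates $(r,\phi,\theta)$ of Lemma \ref{mutual-distances}, centered at $b_{12}$, in which the Northern-hemisphere isosceles arc is $\{\theta=\pi/2\}$ and the kinetic metric is rotationally symmetric about the $b_{12}$-axis. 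For $m_1=m_2$, Lemma \ref{mutual-distances} gives $V(\phi,\pi/2)\le V(\phi,\theta)$ for each fixed $\phi$, with equality only at $\theta=\pi/2$. Given a minimizer $\gamma(t)=(r(t),\phi(t),\theta(t))$, set $\tilde\gamma(t)=(r(t),\phi(t),\pi/2)$ for $t\in[0,\tau]$, where $\tau$ is the first time $V(\phi(t),\pi/2)/r(t)=h$ (such $\tau$ exists since $U=\infty$ at $q_0$ and $V(\phi(T),\pi/2)/r(T)\le h$ at the Hill-boundary endpoint; the truncation is needed precisely because lowering $U$ can make the symmetrized path reach the Hill boundary earlier). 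Then $\tilde\gamma$ joins $q_0$ to $\partial Q_h$ inside $Q_h$, its kinetic energy is pointwise $\le$ that of $\gamma$ (the $\dot\theta$-term is discarded, the remaining metric coefficients being independent of $\theta$), and $U(\tilde\gamma(t))\le U(\gamma(t))$ pointwise; hence $A_{JM}(\tilde\gamma)\le A_{JM}(\gamma)$, with equality if and only if $\gamma$ is isosceles throughout. Minimality forces equality, so \emph{every} minimizer is isosceles --- no analysis of ejection directions, and no uniqueness statement, is needed. (One further omission: $r_{12}=0$ also includes the triple collision point, which the paper handles separately via Theorem \ref{thm:variational}(iii), the Lagrange homothetic solution being isosceles.)
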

  
  Part (c) of this theorem suggest:  
\begin{conjecture}  If three   equal masses are let go at rest, in the shape 
of a scalene triangle with side lengths $r_{12} < r_{13} < r_{23}$ and    attract each other  according to Newton's law
then these side
length inequalities $r_{12} (t) < r_{13} (t) < r_{23}(t)$ persist up to the  instant $t$ of first  syzygy,
\end{conjecture}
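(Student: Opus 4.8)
The plan is to exploit the permutation symmetries of the equal-mass problem, which act as isometries of the Jacobi–Maupertuis metric. Recall that $A_{JM}(\gamma)=\int\sqrt{2(U+h)}\,\|d\gamma\|$ is the length functional of the conformal (Jacobi) metric $g_{JM}=2(U+h)\,g$, with $g$ the mass metric. A transposition of two equal masses permutes equal terms in both $U$ and $g$, hence is an isometry of $g_{JM}$ preserving the Hill region and its boundary. For $m_1=m_2$ the relevant reflection is $\sigma=(1\,2)$, whose fixed-point set is the isosceles locus $F_{12}=\{r_{13}=r_{23}\}$; as the fixed set of an isometry (equivalently, the invariant isosceles subsystem) $F_{12}$ is totally geodesic and flow-invariant. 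The basic mechanism is a \emph{folding} argument: reflecting across $F_{12}$ everything on one side turns any path into one of exactly the same $A_{JM}$-length lying in $\bar R=\{r_{13}\le r_{23}\}$, since $\sigma$ is an isometry on each side and the identity on $F_{12}$. I will couple this length-preservation with two rigidity facts from the earlier results: interior points of a minimizer are collision-free smooth solutions (Lemma~\ref{lemma:JM_Marchal} and Theorem~\ref{thm:variational}), so a minimizer has no corners; and a solution whose position and velocity are tangent to $F_{12}$ stays in $F_{12}$.

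For part (b), suppose $r_{13}(q_0)<r_{23}(q_0)$, so $\gamma$ starts strictly inside $R^\circ=\{r_{13}<r_{23}\}$, and assume for contradiction that $\gamma$ first meets $F_{12}$ at an interior time $t_1$. There the function $r_{13}-r_{23}$ reaches $0$ from below, so either $\gamma$ is tangent to $F_{12}$, in which case the velocity lies in $TF_{12}$ and total geodesy forces $\gamma\subset F_{12}$, contradicting $\gamma(0)\in R^\circ$; or $\gamma$ crosses $F_{12}$ transversally. In the transversal case I fold the post-$t_1$ arc back across $F_{12}$: this yields a competitor of the same (minimal) $A_{JM}$-length, hence again a minimizer, but with a genuine corner at $t_1$, contradicting smoothness of minimizers. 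Thus $\gamma$ never meets $F_{12}$ and the strict inequality persists; the reflected inequality follows by the same argument.

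For part (c) all three masses are equal, so every transposition is a $g_{JM}$-isometry. The relations $r_{12}<r_{13}$ and $r_{13}<r_{23}$ say precisely that $q_0$ lies strictly on one side of the mirror of $(2\,3)$, with fixed set $\{r_{12}=r_{13}\}$, and of the mirror of $(1\,2)$, with fixed set $\{r_{13}=r_{23}\}$. Applying the part-(b) argument separately to each of these two mirrors shows $\gamma$ stays strictly inside the chamber they bound, so both inequalities persist and the third relation $r_{12}<r_{23}$ is automatic.

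Part (a) is the case where $q_0$ lies \emph{on} the mirror: $r_{12}=0$ forces $r_{13}=r_{23}$, so $q_0\in F_{12}$, and the claim is that the entire minimizer stays in $F_{12}$. After folding into $\bar R$, the totally-geodesic/uniqueness dichotomy shows the only way to leave $F_{12}$ is to eject directly from the binary collision $q_0$ into the open chamber $R^\circ$, since any later departure would be a solution tangent to $F_{12}$ escaping it, which is impossible. \textbf{The main obstacle is precisely to exclude this asymmetric ejection} and show the minimizing ejection from the $r_{12}=0$ collision is the symmetric one. I expect to settle this in one of two ways: either by a limiting argument, sending $q_0$ to the collision along collinear configurations with $r_{13}<r_{23}$ and with $r_{13}>r_{23}$ and using continuity of minimizers together with part (b) to pinch the limit to $r_{13}=r_{23}$; or by a direct analysis of binary-collision ejection via Levi-Civita/McGehee regularization, where zero angular momentum forces radial ejection of the colliding pair and one checks that the perpendicular, isosceles ejection is action-minimizing. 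The folding and regularity machinery reduces everything to this single ejection question, which is where the real work lies.
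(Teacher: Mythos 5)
The statement you were asked to prove is the paper's open \emph{conjecture}: for \emph{every} equal-mass solution released from rest in a scalene configuration, the side-length inequalities persist under the actual Newtonian flow up to the first syzygy. The paper offers no proof of this; it is stated only as being ``suggested'' by part (c) of Theorem~\ref{thm:equal_masses}. What you have sketched is instead a proof of Theorem~\ref{thm:equal_masses} itself, which concerns only the special brake orbits arising as $JM$-minimizers from a \emph{collinear} point $q_0$ to the Hill boundary. The gap between the two is not cosmetic, and your method cannot bridge it: every step of your mechanism runs on minimality. The folding step produces a competitor of equal $A_{JM}$-length with a corner, and the contradiction is ``equal action $\Rightarrow$ also a minimizer $\Rightarrow$ smooth by Lemma~\ref{lemma:JM_Marchal}.'' A general brake orbit is merely a critical point of $A_{JM}$, so an equal-action broken competitor contradicts nothing; and there is no reason the time-reversal of an arbitrary brake orbit should be a minimizer from its first syzygy to the Hill boundary --- the paper's Commentary on caustics (non-collinear brake orbits focusing onto points of collinear ones) is precisely evidence that brake orbits can fail to minimize. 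Until you can either dispense with minimality or show that all the relevant brake orbits are minimizers, the proposal says nothing about the conjectured statement; that is exactly why it remains a conjecture.

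Judged against what it actually addresses, your argument for parts (b) and (c) coincides with the paper's proof of Theorem~\ref{thm:equal_masses}: reflection through the isosceles plane $r_{13}=r_{23}$ is a symmetry of $K_0$ and $U$; tangential contact would force the whole minimizer into the invariant isosceles set, while transversal crossing allows reflection of the tail, producing an equal-action minimizer that is not differentiable at the crossing time, a contradiction; part (c) follows by applying this to the isosceles planes $r_{ij}=r_{ik}$. For part (a), however, the paper does not need the collision-ejection analysis you flag as ``where the real work lies.'' It uses Lemma~\ref{mutual-distances}: for $m_1=m_2$, on each circle $r_{12}=\mathrm{const.}$ the shape potential $V$ attains its minimum exactly on the isosceles arc $\theta=\pi/2$. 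Given a minimizer $\gamma(t)=(r(t),\phi(t),\theta(t))$ starting on the ray $r_{12}=0$, the projected path $\tilde\gamma(t)=(r(t),\phi(t),\pi/2)$, truncated at its first contact with $\partial Q_h$, has pointwise no larger kinetic energy and no larger $U$, hence $A_{JM}(\tilde\gamma)\le A_{JM}(\gamma)$ with equality only if $\gamma$ was isosceles all along; minimality then forces $\gamma$ to be isosceles. This comparison disposes in a few lines of the step your two fallback strategies were meant to handle. (A minor point: with the paper's convention that the energy is $-h$ with $h>0$, the conformal factor of the $JM$ metric is $2(U-h)$, not $2(U+h)$.)
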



{\bf Commentary.}   Our original goal in using variational methods was to construct the inverse of the syzygy map using
JM minimizers.   This approach was thwarted due to our inability
to exclude or deal with {\it caustics}:  brake orbits which cross each other
in configuration space before syzygy. Points on the  boundary of the image of the syzygy map appear to be 
 conjugate points --  points where non-collinear brake orbits ``focus''  onto a point of a collinear brake orbit.

{\bf Outline and notation.} 
  In the next section we derive the equations of motion in terms  suitable for our purposes.
 In section~\ref{SecExistSyzygy} we use these equations to rederive the theorem of \cite{Mont}, \cite{Mont2} regarding
 infinitely many syzygies.  We also set up the syzygy map.  In section~\ref{SecContinuity} we prove theorem 1
 regarding continuity of the syzygy map.    In section ~\ref{SecVariational} we investigate variational properties of the Jacobi-Maupertuis metric and  prove 
 theorems \ref{thm:variational} and \ref{thm:equal_masses} and the lemmas around them.
   In section~\ref{SecPeriodicBrake} we establish  Theorem 2 concerning a 
periodic isosceles  brake orbit.
 
\section{Equation of Motion and Reduction}\label{SecEq}
 Consider the planar three-body problem with masses $m_i >0 , i=1,2,3$.   
 Let the positions be  $q_i\in\R^2 \cong \C$ and the velocities be $v_i = \dot q_i \in\R^2$.  
Newton's laws of motion are the Euler-Lagrange equation of the Lagrangian
 \begin{equation}\label{eq_Lag}
 L =  K + U
 \end{equation}
 where 
 \begin{equation}\label{eq_KUCart}
\begin{aligned}
2K &= m_1|v_1|^2+m_2|v_2|^2+m_3|v_3|^2\\
U &= \fr{m_1m_2}{r_{12}}+\fr{m_1m_3}{r_{13}}+\fr{m_2m_3}{r_{23}}.
\end{aligned}
\end{equation}
Here $r_{ij} = |q_i-q_j|$ denotes the distance between the $i$-th and $j$-th masses.  
The total energy of the system is constant:
$$K - U = -h\qquad h>0.$$
 
 Assume without loss of generality that total momentum is zero and that the center of mass is at the origin, i.e.,
 $$m_1v_1+m_2v_2+m_3v_3 =m_1q_1+m_2q_2+m_2q_3=0.$$
 Introduce Jacobi variables 
 \begin{equation}\label{eq_Jac}
 \xi_1 =  q_2-q_1 \qquad \xi_2 =  q_3 - \fr{m_1 q_1+ m_2 q_2}{m_1+m_2}
 \end{equation}
 and their velocities $\dot \xi_i$.
 Then the equations of motion are given by a Lagrangian of the same form (\ref{eq_Lag}) where now
\begin{equation}\label{eq_KUJac}
\begin{aligned}
K &= \mu_1 |\dot \xi_1|^2 + \mu_2 |\dot \xi_2|^2\\
U &= \fr{m_1 m_2}{r_{12}}+\fr{m_1 m_3}{r_{13}}+\fr{m_2 m_3}{r_{23}}.
\end{aligned}
\end{equation}
The mass parameters are:
\begin{equation}\label{eq_muij}
\mu_1 = \fr{m_1 m_2}{m_1+m_2}\qquad \mu_2 =  \fr{(m_1+ m_2)m_3}{m_1+m_2+m_3} = \fr{(m_1+ m_2)m_3}{m}
\end{equation}
where 
$$m=m_1+m_2+m_3$$
 is the total mass.
The mutual distances are given by
\begin{equation}\label{eq_rijJac}
\begin{aligned}
r_{12} &= |\xi_1|\\
r_{13} &= |\xi_2+\nu_2 \xi_1|\\
r_{23} &= |\xi_2-\nu_1 \xi_1|
\end{aligned}
\end{equation}
where 
$$\nu_1= \fr{m_1}{m_1+m_2}\qquad \nu_2= \fr{m_2}{m_1+m_2}.$$

\subsection{Reduction}\label{sec_reduction}

 Jacobi coordinates (\ref{eq_Jac}) eliminate the translational symmetry, reducing  the number of degrees of freedom 
from 6 to 4.   The next step is the elimination of the rotational symmetry to reduce from 4 to 3 degrees of freedom. 
This reduction  is accomplished by fixing the angular momentum and working in the quotient space
 by rotations.  When the angular momentum is zero, there is a particulary elegant way to accomplish this reduction.   

Regard the Jacobi variables $\xi$ as complex numbers: $\xi = (\xi_1,\xi_2)\in\C^2$.
Introduce a Hermitian metric on $\C^2$:
\begin{equation}\label{eq_Hermitian}
\metrictwo{u,v} = \mu_1v_1\conj{w_1} + \mu_2v_2\conj{w_2}
\end{equation}
If  $\normtwo{v}^2 = \metrictwo{v,v}$ denotes the corresponding norm then the kinetic energy is given by 
$$\normtwo{\dot \xi}^2 = \mu_1 |\dot \xi_1|^2 + \mu_2 |\dot \xi_2|^2 = K$$
while 
\begin{equation} \label{massa-scalar-prod}
||\xi||^2 = \mu_1 |\xi_1|^2 + \mu_2 |\xi_2|^2 = I
\end{equation}
is the moment of inertia.   We will also use the alternative formula of Lagrange:
\begin{equation}\label{eq_xisquared}
||\xi||^2 = \fr1m\left(m_1m_2 r_{12}^2 + m_1m_3 r_{13}^2 + m_2 m_3 r_{23}^2\right)
\end{equation}
where  the distances $r_{ij}$ are given by(\ref{eq_rijJac}).  
The real part of this Hermitian metric is a Riemannian metric on $\C^2$. The imaginary part of the Hermitian metric is 
 a nondegenerate two-form on $\C^2$ with respect to which the angular momentum constant  $\omega$ of the three-body 
problem takes the form
$$\omega = \im \metrictwo{\xi,\dot \xi}.$$

The rotation group $S^1 = SO(2)$ acts on   $\C^2$  according to $(\xi_1,\xi_2)\rightarrow e^{i\t}(\xi_1,\xi_2)$.
To eliminate this symmetry introduce a new variable
$$r = ||\xi|| = \sqrt{I}$$
to measure the overall size of the configuration and let $[\xi] = [\xi_1,\xi_2]\in\CP^1$ be the point in projective space with 
homogeneous
coordinates $\xi$.  Explicity, $[\xi]$ is an equivalence class of pairs of point of $\C^2\setminus 0$ where $\xi\equiv \xi'$ 
if and only if $\xi' = k \xi$ 
for some nonzero complex constant $k$.   Thus $[\xi]$ describes the shape of the configuration up to rotation and scaling.   
The variables $(r, [\xi])$ together coordinatize the quotient space $\C^2/ S^1$.  

Recall that the one-dimensional complex projective space $\CP^1$ is essentially    the usual Riemann sphere 
$\C\union\infty$.   The formula $\alpha([\xi_1,\xi_2]) = \xi_2/ \xi_1$ gives a   map $\alpha:\CP^1\into \C\union\infty$,
the standard ``affine chart''.  Alternatively, one has the  diffeomorphism $St: \CP^1 \to S^2$ (defined in 
(\ref{hopf})) to the standard unit sphere $S^2\subset\R^3$ by composing 
$\alpha$ with the inverse of a stereographic projection map $\sigma:S^2\into \C\union\infty$.  
The space $S = \CP^1$ in  any of these three forms will be called the {\em shape sphere}.   

In the papers \cite{Mont} and \cite{ChencinerMontgomery} the sphere version of shape space  was used, and the variables 
$r, [\xi]$ were combined at times to give an isomorphism $\C^2/ S^1 \to \R^3$ sending $(r, [\xi]) \mapsto r St([\xi])$.
The projective version of the shape sphere, although less familiar, makes some of the computations below much simpler.
Triple collision corresponds to $0 \in \C^2$ and the quotient map $\C^2 \setminus 0 \to (\C^2 \setminus 0)/S^1$
is realized by the map 
\begin{equation}
\label{eq:quotientmap}
 \pi:\C^2\setminus 0\into Q = (0, \infty) \times \CP^1 ; \\ \\
\pi(\xi) = (||\xi||, [\xi]).
\end{equation}

To write down the quotient  dynamics  we need a description of the kinetic energy in  quotient variables,
and so we need a way of describing tangent vectors to  $\CP^1$.  
 Define the equivalence relation $\equiv$ by $(\xi,u)\equiv (\xi',u')$ if and only if there are complex numbers $k,l$ with $k\ne 0$ such that $(\xi',u') = (k\xi,ku+l\xi)$.  It is easy to see that two pairs are equivalent if and only if $T\pi(\xi,u) = T\pi(\xi',u')$ where $T\pi:T(\C^2\setminus 0)\into T\CP^1$ is the derivative of the quotient map.  Thus an equivalence class $[\xi,u]$ of such pairs represents an element of $T_{[\xi]}\CP^1$, i.e., a {\em shape velocity} at the shape $[\xi]$. 
 One verifies that the  expression 
\begin{equation}\label{eq_FSnorm}
||[\xi,\dot \xi]||^2 =  \fr{\mu_1\mu_2}{||\xi||^4}|\xi_1\dot \xi_2-\xi_2\dot \xi_1|^2.
\end{equation}
defines a quadratic form on tangent vectors at $[\xi]$ and as such is a metric.
It is the Fubini-Study metric, which corresponds under the diffeomorphism $St$ to the standard `round' metric on the sphere,
scaled so that the radius of the sphere is $1/2$.  We emphasize that in  
this expression, and in the subsequent ones   involving the variables $r,[\xi]$,  the variable $\xi$ is to be viewed as a homogeneous coordinate on $\CP^1$ 
so that the 
$||\xi|| = (\mu_1|\xi_1|^2+\mu_2|\xi_2|^2)^{1/2}$
occurring in the denominator is not linked to $r$, which is taken as an independent variable.  Indeed (\ref{eq_FSnorm}) is invariant under rotation and scaling of $\xi, \dot \xi$.

We have the following  nice formula for the kinetic energy:
\begin{proposition} \label{kinetic}
$\displaystyle K = \smfr12||\dot \xi||^2 = \smfr12\left(\dot r^2 + \frac{\omega^2}{r^2} + r^2 ||[\xi,\dot \xi]||^2\right).$
\end{proposition}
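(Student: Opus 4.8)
The plan is to read the right-hand side as a Pythagorean decomposition of the velocity $\dot\xi\in\C^2$ with respect to the Hermitian metric $\metrictwo{\cdot,\cdot}$, splitting it into a radial part (change of size $r$), an infinitesimal-rotation part (the vertical direction along the $S^1$-orbit, responsible for $\omega$), and a shape-changing horizontal part (the Fubini--Study term). The first equality $K=\smfr12\normtwo{\dot\xi}^2$ merely re-expresses the kinetic energy through the Hermitian norm, so all the content is in the second equality. First I would split $\dot\xi$ orthogonally along the complex line $\C\xi$ and its Hermitian-orthogonal complement: write $\dot\xi=a\xi+\eta$ with $a\in\C$ and $\metrictwo{\eta,\xi}=0$. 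Orthogonality gives $\normtwo{\dot\xi}^2=|a|^2\normtwo{\xi}^2+\normtwo{\eta}^2=|a|^2 r^2+\normtwo{\eta}^2$, which is exactly the split into the ``$\C\xi$'' directions (radial plus rotational) and the horizontal direction.

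For the first term I would compute $a=\metrictwo{\dot\xi,\xi}/r^2$ and then separate the complex number $\metrictwo{\dot\xi,\xi}$ into its real and imaginary parts. Differentiating $r^2=\normtwo{\xi}^2$ gives $r\dot r=\re\metrictwo{\dot\xi,\xi}$, while Hermitian symmetry $\metrictwo{\xi,\dot\xi}=\conj{\metrictwo{\dot\xi,\xi}}$ together with the definition $\omega=\im\metrictwo{\xi,\dot\xi}$ gives $\im\metrictwo{\dot\xi,\xi}=-\omega$. Hence $|\metrictwo{\dot\xi,\xi}|^2=r^2\dot r^2+\omega^2$, so that $|a|^2 r^2=\dot r^2+\omega^2/r^2$, accounting for the first two terms on the right.

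The horizontal term is where the Fubini--Study norm must appear, and this is the one step deserving care. From the decomposition, $\normtwo{\eta}^2\,r^2=\normtwo{\dot\xi}^2\,\normtwo{\xi}^2-|\metrictwo{\dot\xi,\xi}|^2$, which is the Gram determinant of the pair $(\xi,\dot\xi)$ for the weighted Hermitian form. The hard part will be evaluating this determinant cleanly: expanding both products and using that the real part is invariant under conjugation, the two cross terms coincide and combine to give $\mu_1\mu_2\,|\xi_1\dot\xi_2-\xi_2\dot\xi_1|^2$, a weighted Lagrange identity. Comparing with formula (\ref{eq_FSnorm}) and using $\normtwo{\xi}^4=r^4$ then yields $\normtwo{\eta}^2=r^2\,||[\xi,\dot\xi]||^2$. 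Adding this to the first two terms gives the claimed identity.

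Conceptually, I would also note that the whole computation is the statement that the quotient map $\pi$ of (\ref{eq:quotientmap}) is, off the zero-velocity locus, a Riemannian submersion onto the cone $(0,\infty)\times\CP^1$ carrying the metric built from $dr^2$ and the scaled Fubini--Study metric, with the rotational fibers contributing the centrifugal term $\omega^2/r^2$. In this reading there is no genuine obstacle; the only point requiring attention is the bookkeeping in the weighted Lagrange identity of the preceding paragraph, which is otherwise routine.
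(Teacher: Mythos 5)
Your proposal is correct, but there is nothing in the paper to compare it against: the authors explicitly decline to prove Proposition \ref{kinetic}, writing ``We leave the proof up to the reader, or refer to [Chenciner--Montgomery] for an equivalent version.'' Your argument supplies exactly the missing computation, and it is the natural one (essentially Saari's decomposition of the kinetic energy, which is also what the cited reference does in its own coordinates). All the steps check out: writing $\dot\xi = a\xi + \eta$ with $\metrictwo{\eta,\xi}=0$ gives $\normtwo{\dot\xi}^2 = |a|^2r^2 + \normtwo{\eta}^2$; differentiating $r^2=\metrictwo{\xi,\xi}$ gives $r\dot r = \re\metrictwo{\dot\xi,\xi}$, and Hermitian symmetry gives $\im\metrictwo{\dot\xi,\xi} = -\omega$, so $|a|^2r^2 = \dot r^2 + \omega^2/r^2$; and the Gram determinant identity
$$\normtwo{\dot\xi}^2\normtwo{\xi}^2 - \bigl|\metrictwo{\dot\xi,\xi}\bigr|^2 = \mu_1\mu_2\,|\xi_1\dot\xi_2 - \xi_2\dot\xi_1|^2$$
does hold (the two $\mu_1\mu_2$ cross terms agree after taking real parts under conjugation), which via (\ref{eq_FSnorm}) identifies $\normtwo{\eta}^2$ with $r^2\,||[\xi,\dot\xi]||^2$; note also that replacing $\dot\xi$ by $\eta$ does not change the determinant $\xi_1\dot\xi_2-\xi_2\dot\xi_1$, consistent with $[\xi,\dot\xi]=[\xi,\eta]$ as tangent vectors to $\CP^1$. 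One minor point worth flagging, though it is the paper's inconsistency rather than yours: the first equality $K = \smfr12\normtwo{\dot\xi}^2$ conflicts with the normalization $K=\normtwo{\dot\xi}^2$ stated just before the proposition in Section \ref{sec_reduction} (and with (\ref{eq_KUJac})); as you say, this is purely a convention issue, and the substantive second equality is what you prove.
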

We leave the proof up to the reader, or refer to \cite{ChencinerMontgomery} for an equivalent version.
Taking $\omega = 0$ gives the simple formula
 \begin{equation}\label{eq_Kred}
 K_0 = \smfr12\dot r^2 +  \smfr12  r^2  \fr{\mu_1\mu_2}{||\xi||^4}|\xi_1\dot \xi_2-\xi_2\dot \xi_1|^2
  \end{equation} 

By homogeneity, the negative  potential energy $U$  can also be expressed in terms of $r, [\xi]$.  Set
\begin{equation}\label{eq_V}
V([\xi]) =  ||\xi||\, U(\xi).
\end{equation} 
thus defining $V$.  Equivalently,  $V([\xi]) = U(\xi/\|\xi\|)$.
Since the right-hand  side is homogeneous of degree 0 with respect to $\xi$, and since $U$ is
invariant under rotations, the value of $V([\xi])$ is independent of the choice of representative for $[\xi]$.   Clearly we have $U(\xi) = \smfr1r V([\xi])$ for 
$\xi\in\C^2\setminus 0$ and $(r,[\xi])= \pi(\xi)$.  The function $V:S\into\R$ will be called the {\em shape potential}.

The function $L_{red}:TQ\into\R$ given by 
\begin{equation} \label{lagr_red}
L_{red}(r,\dot r,[\xi,\dot\xi]) =  K_0 + \fr{1}{r}V([\xi])
\end{equation}
will be called the {\em reduced Lagrangian}.  The theory of Lagrangian reduction \cite{Mars,Mont} then gives
\begin{proposition} \label{reduced-solutions}
Let $\xi(t)$ be a zero angular momentum solution of the three-body problem in Jacobi coordinates.  Then
$(r(t),[\xi(t)]) = \pi(\xi(t)) \in Q = (0, \infty) \times \CP^1$ is a solution of the Euler-Lagrange equations for the reduced Lagrangian $L_{red}$ on $TQ$.
\end{proposition}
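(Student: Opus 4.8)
\subsection*{Proof proposal}

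The plan is to give a variational proof, exploiting that solutions of the three-body problem are precisely the critical points of the action $\int L\,dt = \int (K+U)\,dt$ with fixed endpoints, and that the reduced Lagrangian was engineered to be the ``horizontal'' part of $L$ at zero angular momentum. The starting observation is the pointwise identity, valid for an \emph{arbitrary} curve $\xi(t)$ in $\C^2\setminus 0$ with projection $\gamma = \pi\circ\xi$: combining Proposition \ref{kinetic} with $U(\xi) = \tfrac1r V([\xi])$ gives
\[
L(\xi,\dot\xi) = K + U = L_{red}(\gamma,\dot\gamma) + \tfrac12\frac{\omega^2}{r^2}, \qquad \omega = \im\metrictwo{\xi,\dot\xi},
\]
because $r$, $\dot r$, $||[\xi,\dot\xi]||^2$ and $V([\xi])$ all descend to functions of $(\gamma,\dot\gamma)$, while the only piece of $K$ not captured by $L_{red}=K_0+\tfrac1r V$ is the angular term $\tfrac12\omega^2/r^2$. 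Thus the full action of any lift and the reduced action of its projection differ only by $\tfrac12\int \omega^2/r^2\,dt$.

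Next I would set up the comparison of variations. Given the zero angular momentum solution $\xi(t)$ (so $\omega\equiv 0$ along it) and its projection $\gamma(t)=\pi(\xi(t))$, take an arbitrary fixed-endpoint variation $\gamma_s$ of $\gamma$ in $Q$. Since $\pi$ is a principal $S^1$-bundle projection, $\gamma_s$ lifts to a smooth family $\hat\xi_s$ in $\C^2\setminus 0$ with $\hat\xi_0=\xi$; adjusting each lift by a time-dependent phase $e^{i\psi_s(t)}$ with $\psi_0\equiv 0$, I can arrange that $\hat\xi_s$ has the same fixed endpoints as $\xi$ for every $s$, so that the $\hat\xi_s$ form an admissible fixed-endpoint variation of $\xi$. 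Writing $\hat\omega_s = \im\metrictwo{\hat\xi_s,\dot{\hat\xi}_s}$ and applying the identity above to each $\hat\xi_s$ yields
\[
\int L(\hat\xi_s,\dot{\hat\xi}_s)\,dt = \int L_{red}(\gamma_s,\dot\gamma_s)\,dt + \tfrac12\int\frac{\hat\omega_s^{\,2}}{r_s^2}\,dt.
\]

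The crux of the argument is then to differentiate in $s$ at $s=0$. Because $\hat\omega_0 = \omega \equiv 0$, every term in $\frac{d}{ds}\big|_0\big(\hat\omega_s^{\,2}/r_s^2\big)$ carries a factor $\hat\omega_0=0$, so the derivative of the correction term vanishes; this is exactly where the zero angular momentum hypothesis is used. Hence $\frac{d}{ds}\big|_0\int L_{red}(\gamma_s)\,dt = \frac{d}{ds}\big|_0\int L(\hat\xi_s)\,dt$, and the right-hand side is zero because $\xi$ is a solution and $\hat\xi_s$ is a fixed-endpoint variation of it. As $\gamma_s$ was an arbitrary fixed-endpoint variation of $\gamma$, this shows $\gamma$ is a critical point of the reduced action, i.e. a solution of the Euler--Lagrange equations for $L_{red}$.

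I expect the only delicate step to be the construction of the lifts $\hat\xi_s$ with genuinely fixed endpoints: one must check that the phase correction bringing the moving endpoint back to $\xi(t_1)$ can be chosen smoothly in $s$ while preserving $\hat\xi_0=\xi$ (this is the Lagrangian shadow of the holonomy/reconstruction phenomenon in reduction). Everything else is bookkeeping. As an alternative to this hands-on argument, one may simply invoke the Routh reduction theorem from the cited references \cite{Mars,Mont}: $L$ is $S^1$-invariant with momentum map $\omega$, and at the value $\omega=0$ the Routhian coincides with $L_{red}$, so solutions at zero momentum project to solutions of the reduced Euler--Lagrange equations.
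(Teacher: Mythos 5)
Your proof is correct, and it does genuinely more than the paper, which offers no argument at all for this proposition: the text simply states that ``the theory of Lagrangian reduction \cite{Mars,Mont} then gives'' the result, so the paper's proof is precisely the citation you relegate to a closing remark as an alternative. Your main argument is a self-contained specialization of that theory to zero momentum, and its three ingredients all check out against the paper's formulas: the splitting $L = L_{red} + \tfrac12\,\omega^2/r^2$ follows from Proposition \ref{kinetic} together with (\ref{eq_FSnorm}), (\ref{eq_Kred}), (\ref{eq_V}) and (\ref{lagr_red}); fixed-endpoint variations $\gamma_s$ of $\gamma$ do lift to fixed-endpoint variations of $\xi$ (lift through the principal $S^1$-bundle $\pi$ using any connection, then multiply by a phase $e^{i\psi_s(t)}$ affine in $t$ that cancels the two endpoint discrepancies; this changes neither the projection nor the $s=0$ curve, so the ``delicate step'' you flag is in fact routine); and since the correction term is quadratic in $\hat\omega_s$ while $\hat\omega_0\equiv 0$, its first variation vanishes, which is exactly and only where the zero-angular-momentum hypothesis enters --- for $\omega\neq 0$ the lifted variations need not preserve $\omega$, the first variation of the correction survives, and the reduced equations acquire the extra terms that the general theory packages into the Routhian/amended potential. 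What your route buys is transparency: the reader sees precisely why $\omega=0$ makes $L_{red}$ the correct reduced Lagrangian with no correction terms. What the paper's route buys is brevity and generality across all momentum values. One cosmetic caution: the paper's kinetic-energy normalizations disagree by a factor of $2$ between (\ref{eq_KUJac}) and Proposition \ref{kinetic}; your identity uses the latter's convention, which is the one consistent with (\ref{eq_Kred}) and (\ref{lagr_red}).
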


\subsection{The Shape Sphere and the Shape Potential}
Let $C$ be the set of collinear configurations. In terms of Jacobi variables 
$\xi=(\xi_1,\xi_2)\in C$ if and only if the ratio of $\xi_1,\xi_2$ is real.   The corresponding projective point then satisfies $[\xi]\in\RP^1\subset\CP^1$.  
 Recall that $\RP^1$ can also be viewed as the extended real line $\R\union\infty$ or as the  circle $S^1$.  Thus one can say that the normalized collinear shapes form a circle in the shape sphere, $S$.  Taking the size into account one has
$C = \R^+\times S^1\subset \R^+\times S$.  Here and throughout, by abuse of notation, 
 we will write $C$ as the set of collinear states, either viewed before or after reduction
by the circle action (so that $C \subset \bar Q$), or by reduction by  the circle action and scaling (so $C \subset \CP^2$).  

The binary collision configurations and the Lagrangian equilateral triangles
 play an important role in this paper.  Viewed in $\CP^1$, these form five distinguished points, points whose    homogeneous coordinates
 are easily found.  Setting the mutual distances (\ref{eq_rijJac}) equal to zero one finds collision shapes:
$$b_{12}=[0,1]\qquad b_{13} = [1,-\nu_2]\qquad b_{23} = [1,\nu_1]$$
where, as usual, the notation $[\xi_1,\xi_2]$ means that $(\xi_1,\xi_2)$ is a representative of the projective point.  Switching to the Riemann sphere model by setting $z= \xi_2/\xi_1$ gives
$$b_{12}=\infty \qquad b_{13} =-\nu_2 \qquad b_{23} = \nu_1.$$
The equilateral triangles are found to be at $[1,l_\pm]\in\CP^1$ or at $l_\pm\in\C$ where
\begin{equation}\label{eq_lpm}
l_\pm = \fr{m_1-m_2}{2(m_1+m_2)} \pm \fr{\sqrt{3}}{2}\,i = \fr{\nu_1-\nu_2}{2}\pm \fr{\sqrt{3}}{2}\,i .
\end{equation}

We will  choose coordinates on the shape sphere such that all of these special shapes 
have simple coordinate representations \cite{Mont}.  In these coordinates, the shape potential will also have a relatively simple form. 
To carry out this coordinate change,
we use the well-known fact from complex analysis that there is a unique conformal isomorphism (i.e., a fractional linear map) of the Riemann sphere taking any triple of points to any other triple.   Thus one can move the binary collisions to any convenient locations.  We move them to  the third roots unity on the unit circle. 
Working projectively in homogeneous coordinates, a fractional linear map 
$$z = \frac{c w+d}{aw+b}$$
becomes a linear map
$$\m{\xi_1\\ \xi_2} = \m{a&b\\c&d}\m{\eta_1\\ \eta_2}$$
where $[\xi_1,\xi_2] = [1,z]$ and $[\eta_1,\eta_2] = [1,w]$.

\begin{proposition}
Let $\lambda = e^\fr{2\pi i}{3}$ and $\phi$ be the unique conformal map taking $1, \lambda,\conj\lambda$ to $b_{12}, b_{13},b_{23}$ respectively.   Then $\phi$  
 maps the unit circle  to the collinear shapes and $0$ and $\infty$ to the equilateral shapes $l_+$ and $l_-$.  
Moreover, in homogeneous coordinates
\begin{equation}\label{eq_phimatrix}
\phi([\eta]) = \m{1&-1\\l_+&-l_-}\m{\eta_1\\ \eta_2}
\end{equation}
\end{proposition}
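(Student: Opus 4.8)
The plan is to verify the explicit formula (\ref{eq_phimatrix}) by direct substitution, and then to read off the three geometric assertions as consequences. Since a fractional linear transformation of the Riemann sphere is uniquely determined by its values at three distinct points, it suffices to check that the map represented by the displayed matrix carries $1,\lambda,\bar\lambda$ to $b_{12},b_{13},b_{23}$; that map must then coincide with $\phi$ itself. Passing to the affine coordinates $z=\xi_2/\xi_1$ downstairs and $w=\eta_2/\eta_1$ upstairs, the matrix acts by $z = (l_+ - l_- w)/(1-w)$, and I would begin by noting that at $w=1$ the numerator is $l_+ - l_- = \sqrt{3}\,i\neq 0$ while the denominator vanishes, so $z=\infty = b_{12}$.

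The one real obstacle is checking the two remaining point conditions $\lambda\mapsto -\nu_2$ and $\bar\lambda\mapsto\nu_1$, and here I would exploit a cancellation rather than multiply everything out blindly. Writing $\lambda=-\frac12+\frac{\sqrt3}{2}i$ and using $l_\pm = \frac{\nu_1-\nu_2}{2}\pm\frac{\sqrt3}{2}i$, the key observation is that the denominator $1-\lambda=\frac32-\frac{\sqrt3}{2}i$ divides the numerator exactly: a short computation gives $l_+ - l_-\lambda = \frac{\nu_1-\nu_2-1}{2}\,(1-\lambda)$, so the quotient collapses to $\frac{\nu_1-\nu_2-1}{2}$. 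Using $\nu_1+\nu_2=1$ this equals $-\nu_2=b_{13}$. The conjugate case is obtained by replacing $i$ with $-i$ throughout: one finds $l_+ - l_-\bar\lambda = \frac{\nu_1-\nu_2+1}{2}\,(1-\bar\lambda)$, whence $z=\frac{\nu_1-\nu_2+1}{2}=\nu_1=b_{23}$. Spotting these factorizations is what reduces an error-prone algebraic check to a single cancellation; this is the step I expect to require the most care.

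With the matrix now identified as $\phi$ by uniqueness, the remaining claims fall out immediately. Setting $w=0$ gives $z=l_+$, and setting $w=\infty$, i.e. $[\eta_1,\eta_2]=[0,1]$, gives $z=(-l_-)/(-1)=l_-$; this is precisely the statement that $0$ and $\infty$ map to the equilateral shapes $l_+$ and $l_-$. For the collinear shapes I would invoke the fact that a fractional linear map sends circles and lines to circles and lines: $\phi$ carries the unit circle, which is the unique circle through the three distinct points $1,\lambda,\bar\lambda$, onto the unique circle or line through their images $\infty,-\nu_2,\nu_1$. Since that image locus contains the point $\infty$ together with two finite real points, it must be the extended real axis $\R\cup\infty=\RP^1$, which is exactly the set of collinear shapes. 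This establishes all three assertions.
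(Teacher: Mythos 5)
Your proof is correct, and all three of your computations check out: the factorizations $l_+ - l_-\lambda = \tfrac{\nu_1-\nu_2-1}{2}(1-\lambda)$ and $l_+ - l_-\conj\lambda = \tfrac{\nu_1-\nu_2+1}{2}(1-\conj\lambda)$ are valid, and together with $\nu_1+\nu_2=1$ they give exactly $b_{13}=-\nu_2$ and $b_{23}=\nu_1$ (note also that the matrix has determinant $l_+-l_-=\sqrt{3}\,i\neq 0$, so it does define a M\"obius map). However, your route differs from the paper's. The paper disposes of the proposition in one line, saying the proof is ``routine, aided by the fact that $\phi$ preserves cross ratios'': the intended argument is to \emph{derive} the map by solving the cross-ratio identity $(z;b_{12},b_{13},b_{23})=(w;1,\lambda,\conj\lambda)$ for $z$, which produces formula (\ref{eq_phimatrix}) and the circle/equilateral statements along the way. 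You instead take the displayed matrix as given, verify its action at the three points $1,\lambda,\conj\lambda$, and invoke uniqueness of a fractional linear map through three points; the collinear claim then follows from the circles-to-circles property rather than from cross-ratio reality. Your verification approach is cheaper here precisely because the proposition already states the answer --- there is nothing to derive, only to check --- while the cross-ratio route is what you would use if the matrix were unknown. One small caution: your phrase ``replacing $i$ with $-i$ throughout'' for the $\conj\lambda$ case is not literally complex conjugation of the first identity (conjugation would swap $l_+$ and $l_-$, yielding a statement about $l_- - l_+\conj\lambda$), so that case genuinely requires the separate computation you indicate with ``one finds''; as written this is a loose turn of phrase rather than a gap, since the claimed factorization is true.
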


The proof is routine, aided by the fact that $\phi$ preserves  cross ratios.

We will be using  $\eta = (\eta_1,\eta_2)$ as homogeneous coordinates on $\CP^1$ and setting 
$$w = \eta_2/\eta_1\in C.$$
In $w$ coordinates, we have seen that   the collinear shapes form  the unit circle, 
with the binary collisions at the third roots of unity and the Lagrange shapes at $w=0,\infty$.   
We also need   the  shape potential in $w$-variables.  It can be calculated from the formulas in the previous subsection by simply setting 
\begin{equation}\label{eq_etas}\xi_1 = \eta_1-\eta_2\qquad \xi_2 =  l_+ \eta_1-l_- \eta_2 \qquad \eta_1= 1\qquad \eta_2= x+iy.
\end{equation}
First,  (\ref{eq_rijJac}) gives the remarkably simple expressions
\begin{equation}\label{eq_rijxy}
\begin{aligned}
r_{12}^2 &= (x-1)^2+y^2\\
r_{13}^2 &=(x+\smfr12)^2+(y-\smfr{\sqrt{3}}{2})^2\\
r_{23}^2 &=(x+\smfr12)^2+(y+\smfr{\sqrt{3}}{2})^2.
\end{aligned}
\end{equation}
Using these, one can express the norm of the homogeneous coordinates and the shape potential as functions of $(x,y)$.  $||\xi||$  is given by (\ref{eq_xisquared})
and 
\begin{equation}\label{eq_Vxy}
V(x,y) =  ||\xi||\left(\fr{m_1m_2}{r_{12}}+\fr{m_1m_3}{r_{13}}+\fr{m_2m_3}{r_{23}}\right)
= \fr{m_1 m_2}{\rho_{12}}+\fr{m_1 m_3}{\rho_{13}}+\fr{m_2 m_3}{\rho_{23}}
\end{equation} 
with $\rho_{ij} =  r_{ij}/||\xi||$.

{\bf Remark.}
  It is worth saying a bit about  the meaning of   the expressions eq. (\ref{eq_rijxy})  and the variables $\rho_{ij}$
  occuring in eq (\ref{eq_Vxy}).   A  function on $\C^2$ which is  homogeneous of
degree 0  and rotationally invariant   defines a function on $\CP^1$.  But the $r_{ij}$ are   homogeneous of degree $1$,
so do not define functions on $\CP^1$ in this simple manner So what is   eq. (\ref{eq_rijxy}) saying? 
Introduce the   local section $\sigma: \CP^1 \setminus \{ \infty \} \to \C^2$, 
given by  $[1, w ] \to (1, w)$
and the linear map $\tilde \Phi: \C^2 \to \C^2$ which induces   $\phi$.   Apply $\tilde \Phi \circ \sigma$ to
the point $[1, w]$ to
  form  
   $\tilde \Phi (\sigma ([1, \eta]) = (\xi_1, \xi_2) \in \C^2$ and then    apply the distance functions $r_{ij}$ to this configuration
   in $\C^2$  to get the $r_{ij}$ of  
  eq. (\ref{eq_rijxy}).  That is,  the  functions  of  eq. (\ref{eq_rijxy})  are $r_{ij} \circ \tilde \Phi \circ \sigma$. 
  Then  $|| \xi ||$ in
the expression for $V$ is   the moment of inertia $I = r^2$ as given by 
(\ref{eq_xisquared})  with the $r_{ij}$ there being those given by  eq. (\ref{eq_rijxy}). 

Alternatively,  we can view $\CP^1$ as $S^3/S^1$ and realize the $S^3$ by setting
$\| \xi \| = 1$.  Then the $\rho_{ij}$ are $r_{ij}$ restricted to this $S^3$, and then understood
as $S^1$ invariant functions.

\vskip .3cm

Figure~\ref{fig_spherecontour} shows a spherical contour plot of $V$ for equal masses $m_1=m_2=m_3$.    The equator features the three binary collision singularities as well as three saddle points corresponding to the three collinear or Eulerian central configurations.  The equilateral points at the north and south poles of the sphere  are the Lagrangian central configurations which are minima of $V$.

\begin{figure}
\scalebox{.6}{\includegraphics{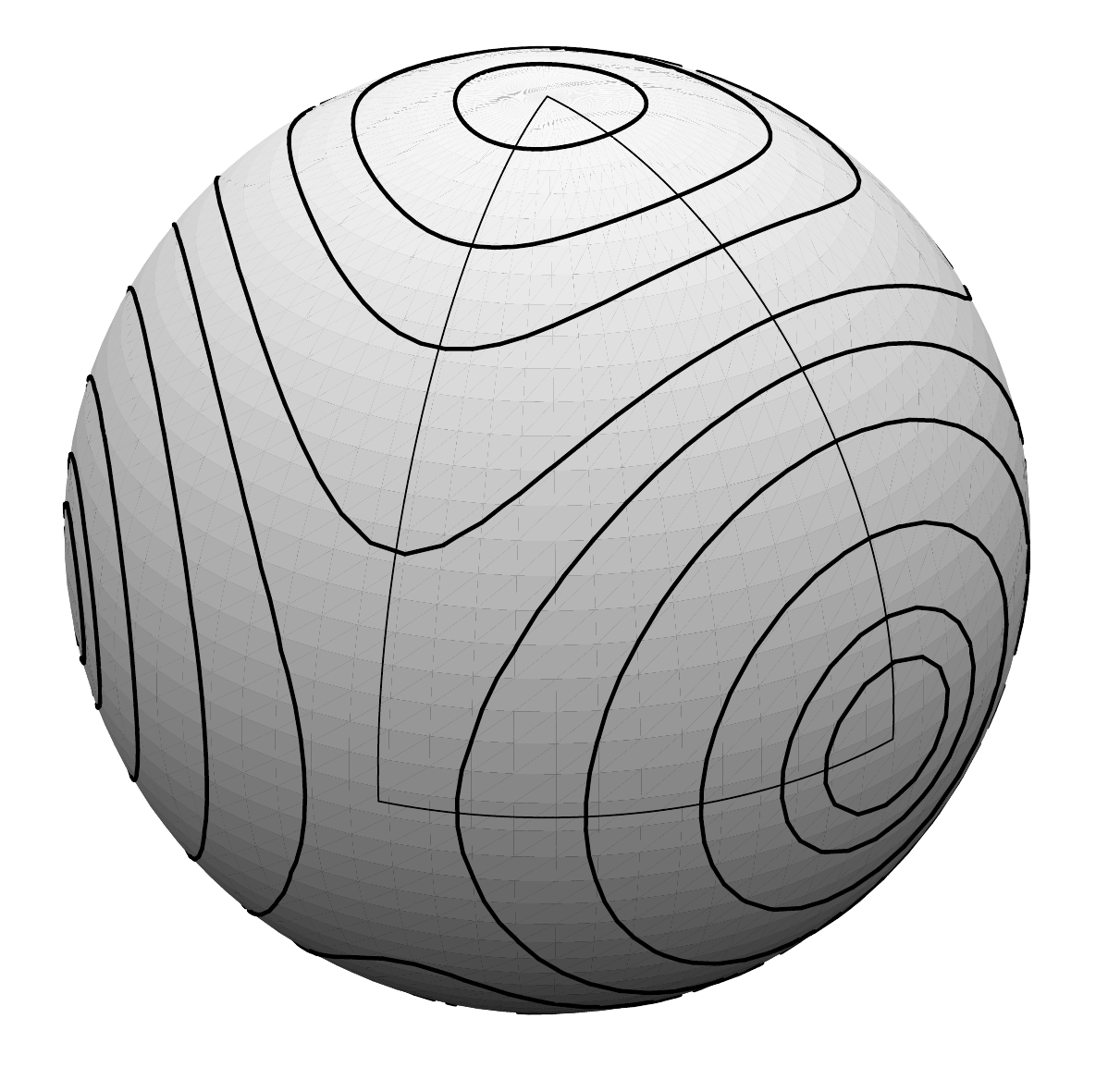}}
\caption{Contour plot of the shape potential on the unit sphere $s_1^2+s_2^2+s_3^2=1$  in the equal mass case.  There is a discrete symmetry of order twelve generated by the reflections in the sides of the indicated spherical triangle.} \label{fig_spherecontour}
\end{figure}

Figure~\ref{fig_Vcontourplots} shows contour plots of the shape potential in stereographic coordinates $(x,y)$ for the equal mass case and for $m_1=1,m_2=2,m_3=10$.   The unit disk in stereographic coordinates corresponds to the upper hemisphere in the sphere model.  When the masses are not equal, the potential is not as symmetric, but due to the choice of coordinates, the binary collisions are still at the roots of unity and the Lagrangian central configuration (which is still the minimum of $V$)  is at the origin.

The variables in figures~\ref{fig_spherecontour} and \ref{fig_Vcontourplots} are related by stereographic projection:
$$s_1= \fr{2x}{1+x^2+y^2} \qquad s_2= \fr{2y}{1+x^2+y^2} \qquad s_3= \fr{1-x^2-y^2}{1+x^2+y^2}\qquad.$$

\begin{figure}
\scalebox{.75}{\includegraphics{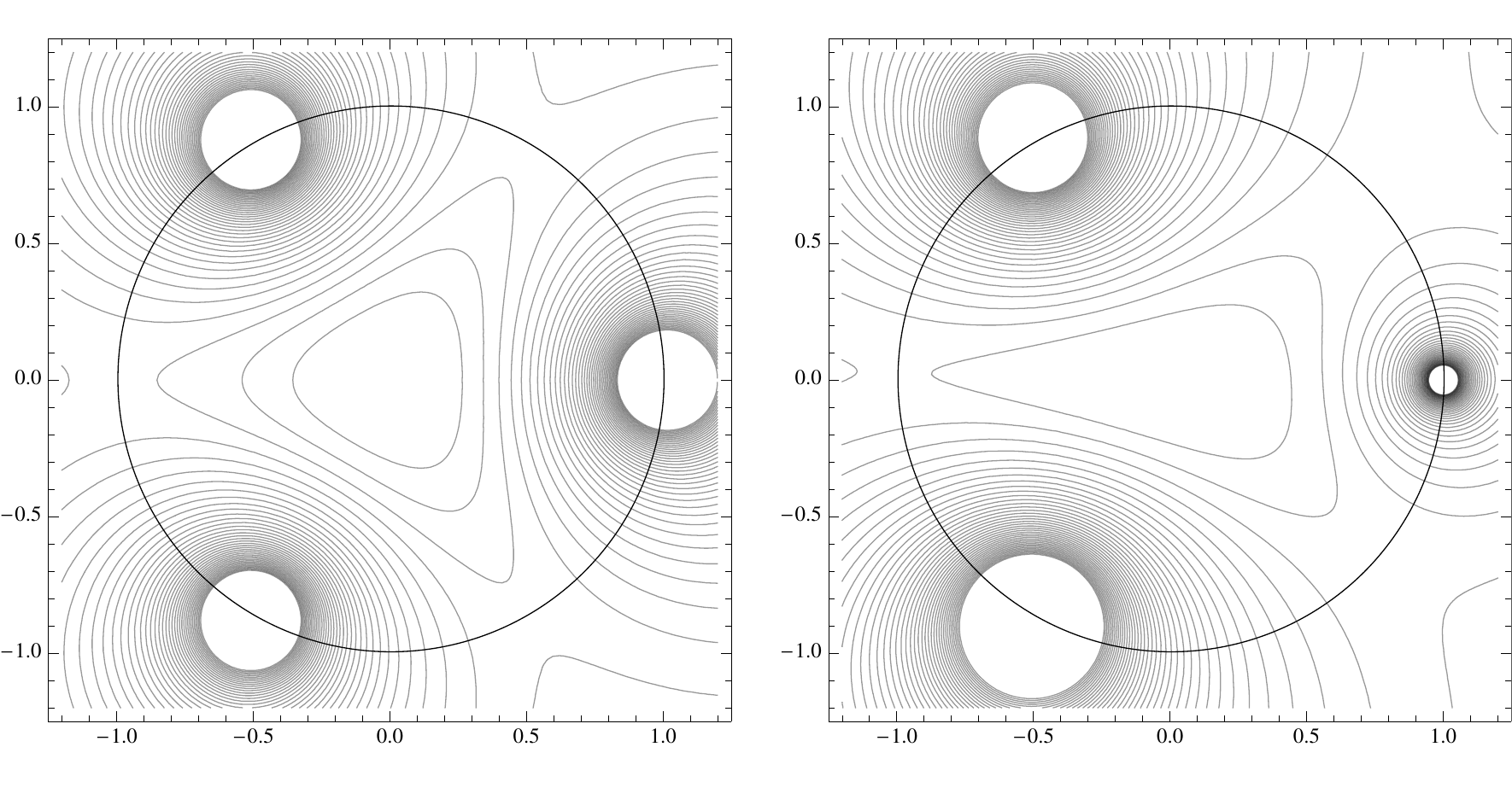}}
\caption{Contour plot of the shape potential in stereographic coordinates $(x,y)$.  The unit disk corresponds to the upper hemisphere in the sphere model.  On the left is the equal mass case as in figure~\ref{fig_spherecontour}. On the right, the masses are $m_1=1,m_2=2,m_3=10$. }\label{fig_Vcontourplots}
\end{figure}

The following result about the behavior of the shape potential will be useful \cite{Mont}.  Consider the potential in the upper hemisphere (the unit disk in stereographic coordinates).  $V(x,y)$ achieves its minimum at the origin.  It turns out that $V$ is strictly increasing along radial line segments from the origin to the equator.

\begin{proposition}\label{prop_Vderiv} (Compare with lemma 4, section 6 of \cite{Mont}.)
For all positive masses,  the shape potential $V(x,y)$ satisfies
$$x V_x + y V_y = \phi(x,y)(1-x^2-y^2)$$
where $\phi(x,y) \ge 0$ with strict inequality if $(x,y)\ne (0,0)$.
\end{proposition}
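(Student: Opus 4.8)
The plan is to compute the radial (Euler) derivative directly. Write $E = x\,\partial_x + y\,\partial_y$, so that $xV_x + yV_y = E(V)$, and set $\rho^2 = x^2+y^2$ and $\sigma = 1-x^2-y^2$. The starting observation is that each squared distance in (\ref{eq_rijxy}) has the form $r_{ij}^2 = \rho^2 + 1 + \ell_{ij}$ with $\ell_{ij}$ linear and homogeneous of degree one ($\ell_{12}=-2x$, $\ell_{13}=x-\sqrt3\,y$, $\ell_{23}=x+\sqrt3\,y$). Since $E$ multiplies a homogeneous polynomial by its degree, $E(r_{ij}^2) = 2\rho^2 + \ell_{ij} = r_{ij}^2 + \rho^2 - 1 = r_{ij}^2 - \sigma$. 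This single identity is the linchpin of the whole computation.

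From it everything follows by the chain rule. With $a_{ij}=m_im_j$, $M = \sum a_{ij}$, $I = \|\xi\|^2 = \tfrac1m\sum a_{ij}r_{ij}^2$ and $U = \sum a_{ij}r_{ij}^{-1}$, one gets $E(I) = I - \sigma M/m$, hence $E(\|\xi\|) = \tfrac{\|\xi\|}{2} - \tfrac{\sigma M}{2m\|\xi\|}$; and $E(r_{ij}) = \tfrac{r_{ij}}{2} - \tfrac{\sigma}{2r_{ij}}$, hence $E(U) = -\tfrac{U}{2} + \tfrac{\sigma}{2}\sum a_{ij}r_{ij}^{-3}$. Now use $V = \|\xi\|\,U$ (eq. (\ref{eq_V})) and the Leibniz rule, $E(V) = U\,E(\|\xi\|) + \|\xi\|\,E(U)$. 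The two terms $\pm\tfrac12 U\|\xi\|$ cancel, leaving exactly $E(V) = \sigma\,\phi$ with
$$\phi = \frac{1}{2m\|\xi\|}\left[\Bigl(\sum a_{ij}r_{ij}^2\Bigr)\Bigl(\sum a_{ij}r_{ij}^{-3}\Bigr) - \Bigl(\sum a_{ij}r_{ij}^{-1}\Bigr)\Bigl(\sum a_{ij}\Bigr)\right].$$
This already gives the claimed factorization $xV_x+yV_y = \phi\cdot(1-x^2-y^2)$.

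It remains to prove $\phi\ge 0$. Index the three pairs by $p=1,2,3$ and write $a_p>0$, $r_p>0$. Expanding the two products as double sums over $(p,q)$ and symmetrizing in $p\leftrightarrow q$, the bracket equals $\tfrac12\sum_{p,q}a_pa_q\,f(r_p,r_q)$ with $f(u,v) = u^2v^{-3}+v^2u^{-3}-u^{-1}-v^{-1}$. Clearing denominators gives the key factorization $u^3v^3 f(u,v) = u^5+v^5-u^3v^2-u^2v^3 = (u^2-v^2)(u^3-v^3) = (u-v)^2(u+v)(u^2+uv+v^2)$, which is manifestly $\ge 0$ for $u,v>0$. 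Hence every term is nonnegative and $\phi\ge 0$. Equality holds iff $f(r_p,r_q)=0$ for all $p,q$, i.e. iff $r_{12}=r_{13}=r_{23}$; by (\ref{eq_rijxy}) this equilateral locus is exactly $(x,y)=(0,0)$ in the finite chart (the other equilateral shape $l_-$ sitting at infinity). Therefore $\phi>0$ away from the origin, as asserted.

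The step requiring the only real insight, and thus the main potential obstacle, is spotting the identity $E(r_{ij}^2)=r_{ij}^2-\sigma$, which is what makes the factor $1-x^2-y^2$ appear and forces the cancellation of the $\tfrac12 U\|\xi\|$ terms; without it the expression for $E(V)$ looks intractable. The positivity of $\phi$ is then secured by the sum-of-squares factorization of $f$, and since the masses enter only as the positive weights $a_p=m_im_j$, nonnegativity holds for \emph{all} positive masses with no case analysis.
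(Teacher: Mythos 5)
Your proposal is correct and takes essentially the same route as the paper: your $\phi$, once the symmetrized double sum is expanded (the bracket equals $m_1m_2m_3(m_1g_1+m_2g_2+m_3g_3)$ with $g_k$ the paper's pairwise terms $(u^2-v^2)(v^{-3}-u^{-3})$), is literally the paper's formula, and the positivity mechanism (each pairwise term is nonnegative, vanishing only when the two distances agree, hence strict positivity away from the equilateral point at the origin) is identical. The only difference is that you supply, via the Euler-operator identity $E(r_{ij}^2)=r_{ij}^2-(1-x^2-y^2)$, the details of the step the paper compresses into ``a computation shows.''
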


\begin{proof}
Write 
$$V = \fr{m_1 m_2}{\rho_{12}}+\fr{m_1 m_3}{\rho_{13}}+\fr{m_2 m_3}{\rho_{23}}$$
with $\rho_{ij} =  r_{ij}/||\xi||$, and $r_{ij}, ||\xi||$ expressed as functions of $(x,y)$ using (\ref{eq_rijxy}) and (\ref{eq_xisquared}).  Then a computation shows that
$$x V_x + y V_y = \phi(x,y)(1-x^2-y^2)$$
where
\begin{equation}\label{eq_phi}
\phi =  \fr{\,m_1m_2m_3}{2(m_1+m_2+m_3)||\xi||}(m_1 g_1+m_2 g_2+m_3 g_3)
\end{equation}
and
$$g_1=(r_{13}^2-r_{12}^2)(\fr{1}{r_{12}^3}-\fr{1}{r_{13}^3})\quad g_2=(r_{23}^2-r_{12}^2)(\fr{1}{r_{12}^3}-\fr{1}{r_{23}^3}) \quad g_3=(r_{23}^2-r_{13}^2)(\fr{1}{r_{13}^3}-\fr{1}{r_{23}^3}).$$
Note that $g_1\ge 0$ with strict inequality except on the line where $r_{12}=r_{13}$.  Similar properties hold for $g_2$ and $g_3$ and the proposition follows.
\end{proof}

\subsection{Equations of Motion and Hill's Region}\label{SecEqsOfMotion}
 We can derive the equations of motion  on $Q = (0, \infty) \times \CP^1$
  by  calculating the reduced Lagrangian $L_{red}$ in any convenient coordinates and then   writing out the 
 resulting Euler-Lagrange equations.   We use the coordinates $r, x, y$ as  above 
 with $w = \eta_2/\eta_1 = x +iy$. (See (\ref{eq:quotientmap}, (\ref{eq_etas}) and also 
 eq. (\ref{eq_rijxy}), (\ref{eq_xisquared}) and (\ref{eq_Vxy}). )
 Then 
$$K_0 = \smfr12 {\dot r}^2 + \smfr12 \kappa(x,y) r^2 ({\dot x}^2+{\dot y}^2)\qquad\qquad \kappa = \fr{3\mu_1\mu_2}{||\xi||^4},$$
where $\| \xi\|^2$ as a function of $x,y$ is obtained by plugging the expressions (\ref{eq_rijxy}) into Lagrange's identity
(\ref{eq_xisquared}). Then
$$L_{red}(r,x,y,\dot r,\dot x,\dot y) = K_0 + \fr{1}{r} V(x,y)$$
and  so the Euler-Lagrange equations are
\begin{equation}\label{eq_ELrxy}
\begin{aligned}
\ddot r&= -\fr{1}{r^2} V + \k r ({\dot x}^2+{\dot y}^2)\\
(\k r^2 \dot x)^\cdot &=  \fr{1}{r} V_x +  \fr12 \kappa_x r^2 ({\dot x}^2+{\dot y}^2)\\
(\k r^2 \dot y)^\cdot &=  \fr{1}{r} V_y+  \fr12 \kappa_y r^2 ({\dot x}^2+{\dot y}^2).
\end{aligned}
\end{equation}
Conservation of energy gives
$$K_0 - \fr{1}{r} V(x,y) = -h.$$

{\bf Remark} The expression $\kappa(x,y) (dx^2 + dy^2)$ describes a spherically symmetric metric on the shape sphere. 
For example, when $m_1 = m_2 = m_3$ one computes that $\kappa = {{1} \over { (1 + x^2 + y^2)^2}}$
which is the standard conformal factor for expressing the metric on the sphere of radius $1/2$
in stereographic coordinates $x,y$.

These equations describe the zero angular momentum three-body problem reduced to 3 degrees of freedom by elimination of 
all the symmetries and separated into size and shape variables.   
An additional  improvement is achieved by blowing up  the triple collision singularity at $r=0$ 
by  introducing the time rescaling ${}' = r^{\smfr32}\;\dot{}$ and the variable $v= r'/r$ \cite{Mc}. 
 The result is the following system of differential equations:
\begin{equation}\label{eq_blowuprxy}
\begin{aligned}
r' &= vr\\
v' &= \smfr12 v^2 + \k ({x'}^2+{y'}^2)-V \\
(\k x')'  &= V_x - \smfr12 \k v x' + \smfr12 \k_x({x'}^2+{y'}^2)  \\
(\k y')'  &= V_y - \smfr12 \k v y' + \smfr12 \k_y({x'}^2+{y'}^2).
\end{aligned}
\end{equation}
The energy conservation equation is
\begin{equation}\label{eq_energystereo}
\smfr12 v^2 +  \smfr12 \k ({x'}^2+{y'}^2)- V(x,y) = -rh
\end{equation}
Note that $\{r=0\}$ is now an invariant set for the flow, called the {\em triple collision manifold}.  
Also, the differential equations for $(v,x,y)$ are independent of $r$.  Call the rescaled time variable $s$. Since the rescaling is such that $t'(s) = r^\frac32$, behavior near triple collision that is fast with respect to the usual time may be slow with respect to $s$.  Motion on the collision manifold could be said to occur in zero $t$-time.

\begin{figure}
\scalebox{.6}{\includegraphics{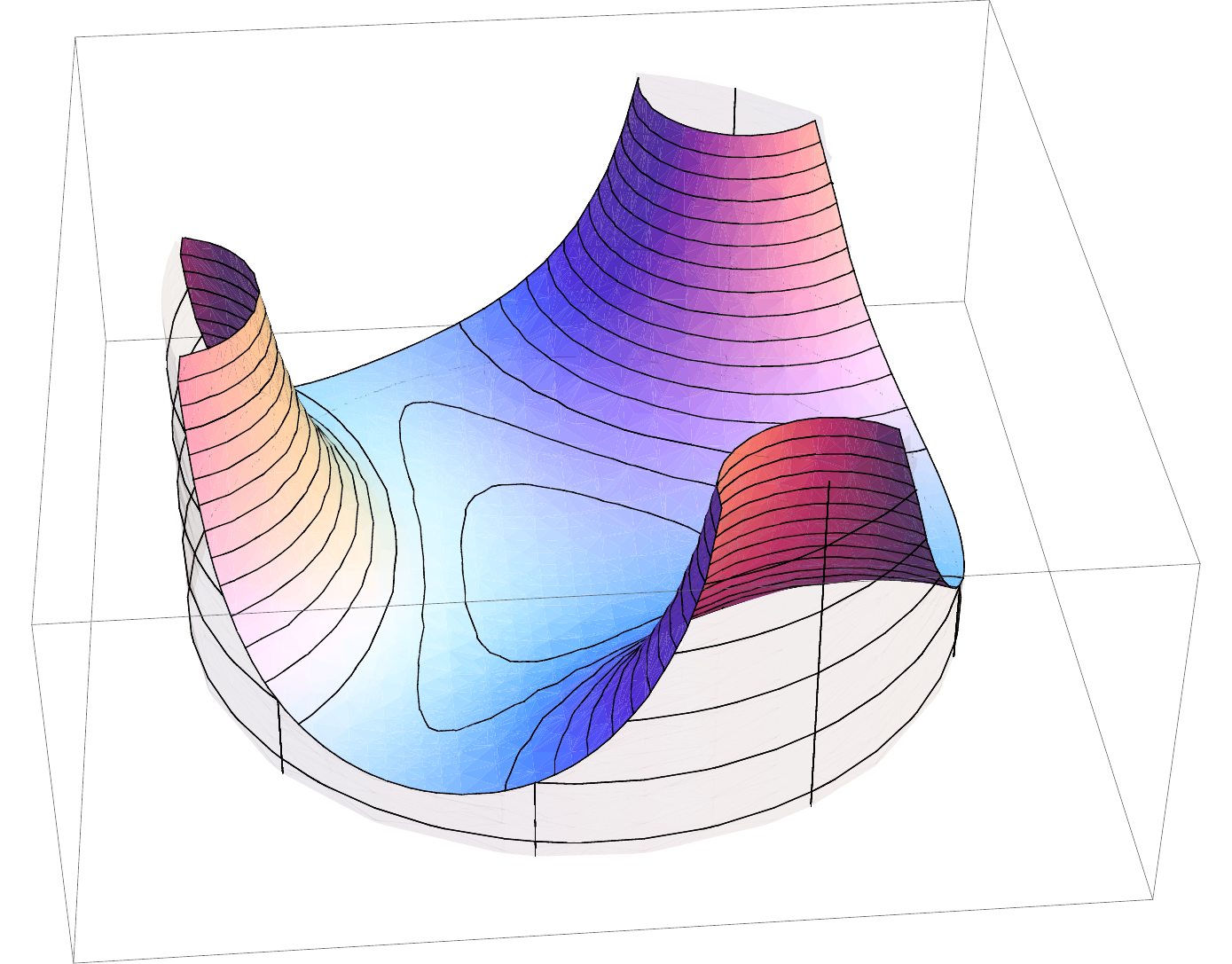}}
\caption{Half of the Hill's region for the equal mass three-body problem in stereographic coordinates $(x,y,r)$ where the unit disk corresponds to the upper half of the shape sphere.  In these coordinates, the configurations with collinear shapes are in the cylindrical over the unit circle.}\label{fig_Hillstereo}
\end{figure}

System (\ref{eq_blowuprxy}) could be written as a first-order system in the variables $(r,x,y,v,x',y')$.  Fixing an energy $-h<0$ defines a five-dimensional energy manifold
$$P_h = \{(r,x,y,v,x',y'):r\ge 0, \;(\ref{eq_energystereo})\,holds\}.$$
The projection of this manifold to configuration space is the {\em Hill's region}, $Q_h$.  Since the kinetic energy is non-negative the Hill's region is given by
$$Q_h = \{(r,x,y): 0 \le r \le V(x,y)/h  \}.$$
Figure~\ref{fig_Hillstereo} shows the part of the Hill's region over the unit disk in the $(x,y)$-plane for the equal mass case.  The resulting solid region has three boundary surfaces.  The top boundary surface $r = V(x,y)/h$ is part of the projection to configuration space of the {\em zero-velocity surface}, the bottom surface,  $r=0$, is contained in the projection of the triple collision manifold.  The side walls are part of the vertical cylinder over the unit circle which represents collinear shapes.

\subsection{Visualizing the Syzygy Map.}
Figure~\ref{fig_Hillspherical} shows a different visualization of the same Hill's region.  This time the shape is viewed as a point 
${\vec s}= (s_1,s_2,s_3)$  on the unit sphere which is then scaled by the size variable $r$ to form $r {\vec s}$ and plotted.  The region of figure~\ref{fig_Hillstereo} corresponds to the upper half of the solid in the new figure.  The collision manifold $r=0$ is collapsed the origin so that the bottom circle of the syzygy cylinder in figure ~\ref{fig_Hillstereo} has been collapsed to a point.  The collinear states within
the Hill region now form the  an unbounded ``three-armed''
 planar region homeomorphic to the interior of the unit  disk of  figure ~\ref{fig_Hillstereo}.  The syzygy map 
is the flow-induced map from the top half of the boundary surface in figure~\ref{fig_Hillspherical}  to the interior of this 
three-armed planar region.  


\begin{figure}
\scalebox{.6}{\includegraphics{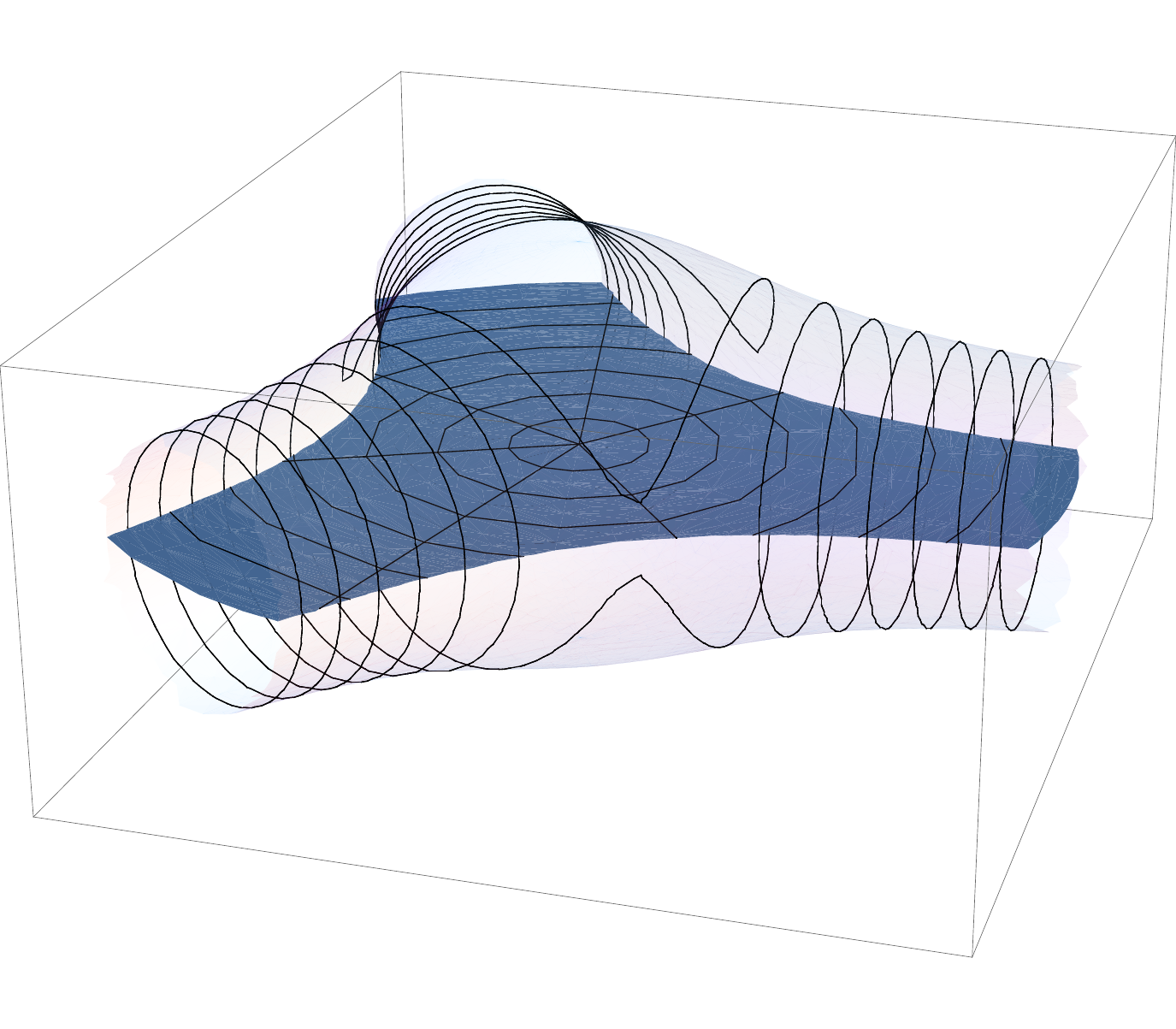}}
\caption{The entire Hill's region for the equal mass three-body problem in coordinates $r(s_1,s_2,s_3)$. The syzygy configurations $C_h$ form the planar region, homeomorphic to a disk, dividing the Hill's region in half.  The origin represents triple collision (which we count as a syzygy). The syzygy map maps from the upper half of the Hill region's boundary, $\partial Q_h^+$,   to $C_h$.} \label{fig_Hillspherical}
\end{figure}

Figure~\ref{fig_syzygyimage} illustrates the behavior of the map in the equal mass case, using coordinates which compress the unbounded region $C_h$ into a bounded one.   The open upper hemisphere of the shape sphere has been identified with the domain of the syzygy map.  The figure shows the numerically computed images of several lines of constant latitude and longitude on the shape sphere.   The figure illustrates some of the claims of theorem~\ref{thm:cty}.  Note that the image of the map seems to be strictly smaller than $C_h$ and is apparently bounded by curves connecting the binary collision points on the boundary.  It does contain a neighborhood of the binary collision rays however.  Also note that a line of high latitude, near the Lagrange homothetic initial condition (the North pole),  maps to a small curve encircling the origin.  Apparently there is a strong tendency, as yet unexplained,  to reach syzygy near binary collision and near the boundary of the image.

\begin{figure}
\scalebox{.6}{\includegraphics{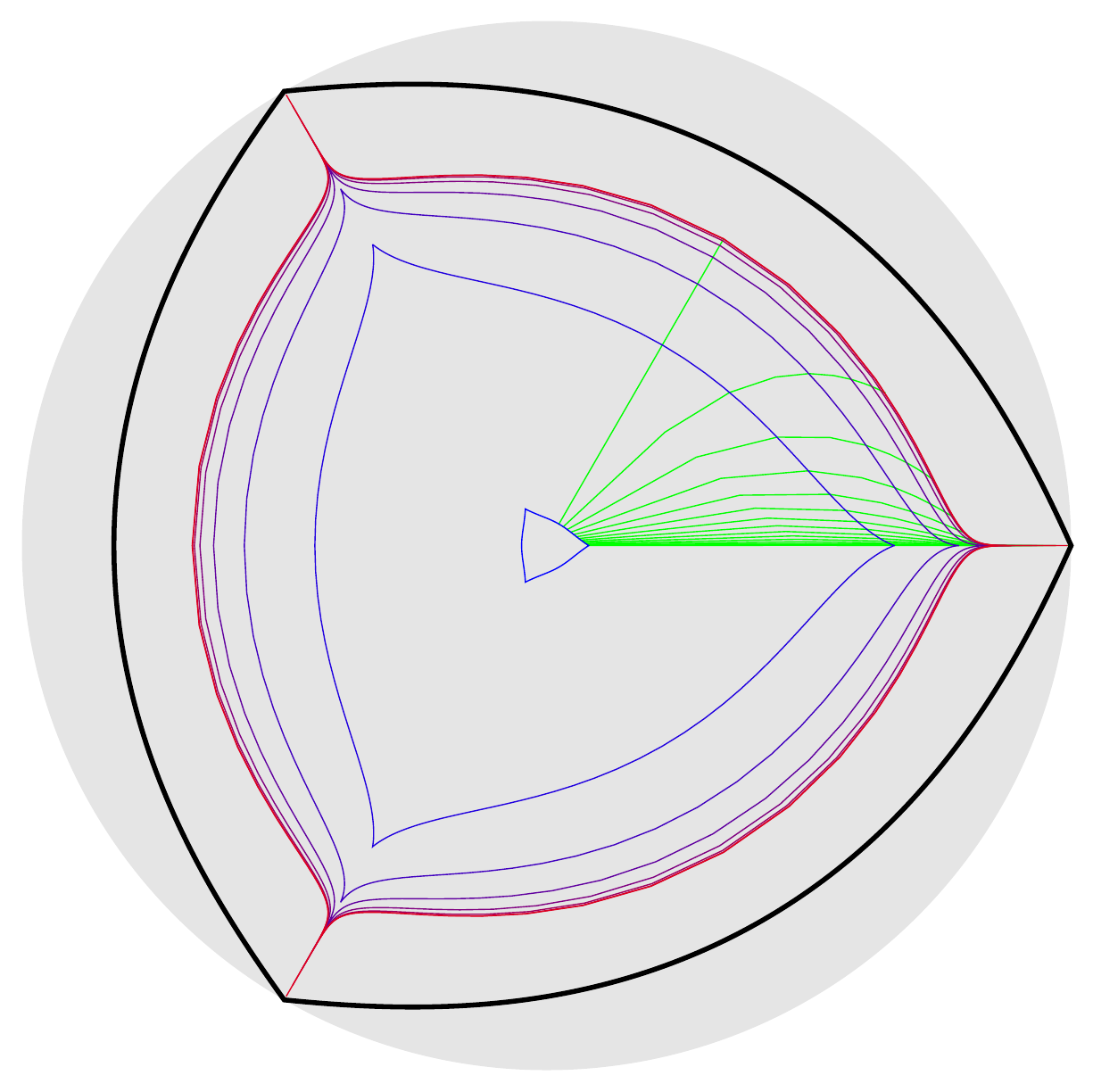}}
\caption{Image of the syzygy map in the equal mass case.  The open upper hemisphere of the shape sphere has been identified with the domain of the syzygy map.  Several circles of constant latitude and a sector of arcs of constant longitude have been followed forward in time to the first syzygy and the resulting image points plotted.  The coordinates on the image are $\arctan(r)\,(s_1,s_2)$ so that the horizontal plane in figure~\ref{fig_Hillspherical} is compressed into the shaded open disk.  The syzygy configurations $C_h$ now form the region bounded by the heavy black curve.   The image of the map, however, seems to be strictly smaller.} \label{fig_syzygyimage}
\end{figure}

\subsection{Flow on the collision manifold: linearization results}\label{sec_collisionflow}

We will need some information about the flow on the triple collision manifold $r = 0$ which we take from \cite{Moe}.  
Eq. (\ref{eq_energystereo}) expresses   the triple collision manifold as a two-sphere bundle over the shape sphere,
with  bundle projection  $(0,x,y, v, x^{\prime}, y^{\prime}) \to  (x,y)$.  
 The vector field (\ref{eq_blowuprxy}) restricted to the triple collision manifold $r = 0$ flow has $10$ critical points, 
 coming in  pairs  $\{ p_+, p_-\}$, one pair  for each central configuration $p$.   Two of these
 central configurations  correspond to the equilateral triangle 
 configurations of Lagrange and are located in our $xy$ coordinates at  the origin    and at  infinity.  $(x,y) = \infty$.
 We write $L$ for the one at the origin. The other  three
 central configurations are  collinear,  were   found by Euler, and are   located 
on the unit circle in the $xy$ plane,  alternating  between  the three binary collision points.
The two equilibria $p_{\pm} = (x_0, y_0, v_{\pm}, 0,0)$  for a given central configuration $p = (x_0, y_0)$ are  obtained by  
solving for $v$ from the $r=0$  energy equation (\ref{eq_energystereo})
to get $v=v_{\pm}=\pm\sqrt{2V(x_0,y_0)}$.  The positive square-root  
$v_+ $   corresponds to solutions `exploding out'' homothetically from that configuration, and
the negative square-root   $v_-$  with $v = v_- < 0$ corresponds to solutions  collapsing into that central  configuration.  
Associated to each central configuration we also have the corresponding homothetic solution, which lives in $r > 0$
and forms a  heteroclinic connection  connecting $p_+$ to $p_-$.

We will also need information regarding the stable and unstable manifolds of  the equilibria $p_{\pm}$. 
This information can be found in \cite{Moe}, Prop 3.3. 
\begin{proposition}
\label{prop_linearization}Each equilibrium $p_{\pm} = (0,x_0, y_0, v_{\pm}, 0, 0)$ is linearly hyperbolic. Each has
  $v_{\pm}$ as an eigenvalue with corresponding  eigenvector   tangent to the
homothetic solution and so transverse to the collision manifold.   Its remaining 4  eigenvectors  are tangent to 
the collision manifold.   

The Lagrange point $L_-$ has a 3 dimensional stable manifold  transverse  to the collision manifold
and a 2-dimensional unstable manifold  contained in
the collision manifold.  The linearized projection of the unstable eigenspace to the tangent space to the shape sphere is onto.  
At $L_+$ the dimensions and properties of the stable and unstable manifolds are reversed.
 
 The Euler points $E_-$  each  have a 2-dimensional stable manifold  transverse
 to the collision manifold 
 and contained in the collinear invariant submanifold 
 and  a  3-dimensional unstable manifold    contained in the collision manifold.  At $E_+$, 
  the dimensions and properties of the stable and unstable manifolds are reversed.
 \end{proposition}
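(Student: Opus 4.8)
The plan is to treat (\ref{eq_blowuprxy}) as a first–order system in the six variables $(r,x,y,v,a,b)$, with $a=x'$ and $b=y'$, and to linearize directly at each equilibrium, as in \cite{Moe}, Prop.~3.3. First I would record the equilibria: setting the right–hand sides to zero at $r=0$ forces $a=b=0$ and $V_x=V_y=0$, so $(x_0,y_0)$ is a central configuration, while the energy relation (\ref{eq_energystereo}) gives $v=v_\pm=\pm\sqrt{2V(x_0,y_0)}$. Since $V>0$ away from triple collision we have $v_\pm\neq0$, which will ultimately supply the transverse hyperbolic direction.

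Next I would compute the Jacobian $DF$ at such a point and exploit its block structure. Because $r$ enters only through $r'=vr$ and no other equation depends on $r$, the $r$–line decouples and contributes the eigenvalue $v_\pm$ with eigenvector $\partial_r$. Within the slice $\{r=0\}$ the variable $v$ likewise decouples (using $V_x=V_y=0$, $a=b=0$), giving a second bare eigenvalue $v_\pm$ with eigenvector $\partial_v$. The remaining block in $(x,y,a,b)$ takes the form
\[
J_4=\begin{pmatrix} 0 & I \\ \kappa_0^{-1}\,\mathrm{Hess}\,V(x_0,y_0) & -\tfrac12 v_\pm I\end{pmatrix},\qquad \kappa_0=\kappa(x_0,y_0)>0 .
\]
Diagonalizing the symmetric matrix $\kappa_0^{-1}\mathrm{Hess}\,V$ with eigenvalues $h_1,h_2$ and eigenvectors $\phi_1,\phi_2$, each $h_i$ produces a pair of eigenvalues of $J_4$ solving $\mu^2+\tfrac12 v_\pm\mu-h_i=0$, with eigenvectors $(\phi_i,\mu\phi_i)$ tangent to the collision manifold.

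I would then pass to the five–dimensional energy surface $P_h$. The crucial bookkeeping point is that, although the full linearization carries the eigenvalue $v_\pm$ with a two–dimensional eigenspace $\mathrm{span}(\partial_r,\partial_v)$, the differential of the conserved energy at the equilibrium is $h\,dr+v_\pm\,dv$, so exactly one combination $w=v_\pm\partial_r-h\partial_v$ is tangent to $P_h$. This $w$ is the promised eigenvector: it lies in the $(r,v)$–plane, hence is tangent to the homothetic orbit and transverse to the collision manifold $\{r=0\}$, while the four eigenvectors of $J_4$ are tangent to it. Linear hyperbolicity then reduces to $h_i\neq0$, i.e. nondegeneracy of the central configuration, which holds for both Lagrange and Euler points.

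Finally I would read off the stable/unstable dimensions from the signs of $h_1,h_2$ and of $v_\pm$. At the Lagrange point $V$ has a nondegenerate minimum, so $h_1,h_2>0$; each quadratic $\mu^2+\tfrac12 v_\pm\mu-h_i=0$ has roots of opposite sign, giving a $2$–$2$ split inside the collision manifold, and the transverse eigenvalue $v_\pm$ supplies the extra stable (for $L_-$) or unstable (for $L_+$) direction, yielding the stated $3/2$ splitting; since the two unstable shape directions project to $\phi_1,\phi_2$, which span $T_{(x_0,y_0)}S$, one gets the surjectivity claim. At an Euler point $V$ has a saddle, with the positive Hessian direction $\phi_1$ tangent to the collinear equator (where $V\to+\infty$ at the flanking binary collisions) and the negative direction $\phi_2$ normal to it (pointing toward the Lagrange minima at the poles); for $h_2<0$ both roots share the sign of $-v_\pm$, tipping the collision–manifold count to $1$–$3$ and, with the transverse direction, producing the $2/3$ splitting. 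Because $w$ and the $\phi_1$–directions are collinear while the $\phi_2$–directions are not, the two–dimensional stable manifold of $E_-$ (resp. unstable of $E_+$) lies in the collinear invariant submanifold. I expect the main obstacle to be these two intertwined sign analyses: cleanly separating the single surviving transverse eigenvalue on $P_h$ from the spurious duplicate $v_\pm$ in the full space, and pinning down the eigendirections of $\mathrm{Hess}\,V$ at the Euler points relative to the collinear circle — the latter being exactly what makes the ``contained in the collinear submanifold'' assertion true, and which can be extracted from Proposition \ref{prop_Vderiv} together with the known saddle structure.
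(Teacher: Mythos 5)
Your linearization is correct, and it is worth noting that the paper itself offers no proof of this proposition at all: the text simply cites \cite{Moe}, Prop.~3.3, so your direct computation in the paper's own coordinates is a genuinely different, self-contained route. I verified the key mechanisms and they hold: in (\ref{eq_blowuprxy}) no equation besides $r'=vr$ involves $r$, and at an equilibrium ($V_x=V_y=0$, $x'=y'=0$) all the terms coming from differentiating $\kappa^{-1}$ and the quadratic velocity terms drop out, so the full Jacobian is block-diagonal, $\mathrm{diag}\bigl(v_\pm,\,v_\pm,\,J_4\bigr)$ with $J_4$ exactly as you wrote it; the eigenvalue quadratic $\mu^2+\tfrac12 v_\pm\mu-h_i=0$ is right; and since $dE=h\,dr+v_\pm\,dv$ at the equilibrium, precisely the line spanned by $w=v_\pm\partial_r-h\,\partial_v$ survives on $TP_h$, and $w$ is indeed tangent to the homothetic orbit $r=\bigl(V(x_0,y_0)-\tfrac12 v^2\bigr)/h$. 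Your sign bookkeeping (a $2$--$2$ split inside the collision manifold at Lagrange for either sign of $v_\pm$; a $1$--$3$ split at Euler tipped by the sign of $-v_\pm$) and the containment arguments via uniqueness of invariant manifolds inside the invariant collinear submanifold are the right ones; as a bonus, the possibility of a complex $h_2$-pair at $E_{j+}$ in your quadratic is exactly the ``spiraling'' phenomenon the paper uses later. The one caveat is that your route still imports two classical nondegeneracy facts that neither this paper nor Proposition~\ref{prop_Vderiv} supplies: positive-definiteness of $\mathrm{Hess}\,V$ at the equilateral configuration, and strict positivity of the tangential eigenvalue $h_1$ at the Euler points (uniqueness of the critical point on a collinear arc with blow-up at both ends gives a minimum, not nondegeneracy); these must come from the literature, e.g.\ \cite{Moe} or \cite{Mc}. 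Your use of Proposition~\ref{prop_Vderiv} for the transverse Euler direction does work --- differentiating $xV_x+yV_y=\phi\cdot(1-x^2-y^2)$ along the ray through $(x_0,y_0)$ gives radial second derivative $-2\phi(x_0,y_0)<0$ --- but identifying the tangent and normal to the equator as Hessian eigendirections also uses the reflection symmetry of $V$ across the collinear circle (inversion in the unit circle in these coordinates, valid for all masses since reflected triangles have equal side lengths), which you should state explicitly.
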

 
{\bf Remarks.} 
1. The $r > 0$ solutions in the stable manifold of $L_-$
are solutions which limit to triple collision in forward time, tending asymptotically
to the Lagrange configuration in shape.  The final arcs of the  solutions of this set can be obtained  
 by minimizing the Jacobi-Maupertuis length between points  $P$ and the triple collision point $0$
for $P$ varying in some open set of the form $U \setminus C$ where $U$  is a \nbhd  of $0$ and $C$
is the collinear set.    

2.  Because the unstable eigenspace for $L_-$ projects linearly onto  
the tangent space to the shape sphere it follows that the unstable manifold of $L_-$
cannot be contained in any shape sphere  neighborhood  $x^2+y^2 <  \epsilon$ of $L$.  

3. The unstable manifold of an exploding Euler equilibrium $E_+$
lie  entirely within the collinear space $z =0$
and  real solutions lying in it   form a two-dimensional set of curves.
These curves can be obtained by  minimizing the Jacobi-Maupertuis length
among all {\it collinear} paths connecting points  $P$ and the triple collision point $0$
as $P$  varies over collinear configurations in   some neighborhood $U \cap C$ of  $0$. 

4. 
The Sundman inequality implies that $v^{\prime} \ge 0$ everywhere on the triple collision manifold
and that $v(s)$ is strictly increasing except  at the 10 equilibrium points.  That is, $v$ acts like a Liapanov function on the
collision manifold.  

\section{The Syzygy Map}\label{SecSyzygyMap}
In this section the syzygy map taking brake initial conditions to their first syzygy will be studied.   As mentioned in the introduction, it will be shown that every non-collinear, zero-velocity initial condition in phase space can be followed forward in time to its first syzygy.  The goal is to study the continuity and image of the resulting mapping.

\subsection{Existence of Syzygies}\label{SecExistSyzygy}
In \cite{Mont, Mont2} Montgomery shows that every solution of the zero angular momentum three-body problem, except the Lagrange (equilateral) homothetic triple collision orbit, must have a syzygy in forward or backward time.  In forward time, the only solutions which avoid syzygy are those which tend to  Lagrangian triple collision.  
We now rederive this result, using our coordinates.
 
The result will  follow  from a study of the differential equation governing the (signed)  distance to syzygy in shape space.  
We  take for the signed distance 
$$z = 1-x^2-y^2$$
where $x,y$ are the   coordinates of section~\ref{SecEqsOfMotion}.
Note the unit circle $z=0$ is precisely the set of collinear shapes.  
From (\ref{eq_ELrxy}), one finds
$$
\begin{aligned}
\dot z&= \frac{2}{\k r^2} p_1\\
\dot p_1 &= -\frac{1}{r}(xV_x+yV_y)-r^2(\dot x^2+\dot y^2)(\k +\smfr12(x\k_x+y\k_y)).
\end{aligned}
$$
A computation shows that
$$\k + \smfr12(x\k_x+y\k_y) = \frac{c(1-x^2-y^2)}{\|\xi\|^6}$$
where 
$$c=\frac{3 m_1m_2m_3(m_1m_2+m_1m_3+m_2m_3)}{(m_1+m_2+m_3)^2}.$$
Using this and proposition~\ref{prop_Vderiv} gives
\begin{equation}\label{eq_zp1}
\begin{aligned}
\dot z&= \frac{2}{\k r^2} p_1\\
\dot p_1 &= -F_1(r,x,y,\dot x,\dot y) z
\end{aligned}
\end{equation}
where
$$F_1 = \frac{1}{r}\phi(x,y) + \frac{c r^2 (\dot x^2+\dot y^2)}{\|\xi\|^6}.$$
Note that $F_1$ is a smooth function for $r>0$ and satisfies $F_1\ge 0$ with equality only when $x=y=\dot x = \dot y=0$.

Using (\ref{eq_zp1}) one can construct a proof of Montgomery's result.  First note that $z=0$ defines the syzygy set and $z=p_1=0$ is an invariant set, namely the phase space of the  collinear three-body problem.  Without loss of generality, consider an initial condition with shape $(x,y)$ in the unit disk, i.e., the upper hemisphere in the shape sphere model.  Our goal is to show that all such solutions reach $z=0$.

{\bf Remark.} The key result of \cite{Mont} is a differential equation  very similar to   eq (\ref{eq_zp1})
for a variable which was also called $z$ but which we will call   $z_I$ now, in order to compare the two. 
The relation between the current $z$ and this $z_I$ is  
$$z_I = { {z} \over {2-z}}$$
and can be derived from the expression $z_I = {{ 1 - x^2 - y^2} \over {1 + x^2 + y^2}}$
for the height component of the stereographic projection map $\R^2 \to S^2 \setminus {(0,0,-1)}$.
The important values of these functions on the shape sphere are
$$\text{ collinear}: z= 0 ; z_I = 0$$
$$\text{Lagrange, pos. oriented}: z= 1 ; z_I = 1$$
$$\text{Lagrange, neg. oriented}: z= +\infty ; z_I = -1$$

\begin{proposition}\label{prop_bounded}
Consider a solution with initial condition lying inside the punctured unit disc in the shape plane
:   $0<z(0)<c_1<1$,  and pointing outward (or at least not inward): $\dot z(0) \le 0$.  Assume that the size of the configuration satisfies $0< r(t) \le c_2$ for all time $t \ge 0$, and some positive constant  $c_2$.  
Then there is a constant $T_0(c_1, c_2)>0$ and a time $t_0\in [0,T_0]$ such that $0< z(t) \le z(0)$ for $t\in [0,t_0)$ and $z(t_0)=0$.
\end{proposition}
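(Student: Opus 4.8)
The plan is to read the pair (\ref{eq_zp1}) as a single scalar oscillator-type equation for the distance-to-syzygy $z$ and to manufacture a \emph{uniform} positive lower bound on the effective frequency, so that $z$ is driven down to $0$ in a time depending only on $c_1$ and $c_2$. Concretely, I would first fix the signs. Since $\dot z=\fr{2}{\k r^2}p_1$ and $\k r^2>0$, the hypothesis $\dot z(0)\le 0$ is exactly $p_1(0)\le 0$. As long as $z>0$ the second equation gives $\dot p_1=-F_1 z\le 0$ (because $F_1\ge 0$), so $p_1$ is non-increasing and remains $\le p_1(0)\le 0$; hence $\dot z\le 0$ as well. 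This already yields the monotonicity part of the claim, $0<z(t)\le z(0)$ up to the first zero $t_0$, and it confines the shape $(x,y)$ to the compact annulus $1-z(0)\le x^2+y^2\le 1$, which excludes the origin since $z(0)<c_1<1$.

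Next I would extract uniform positive bounds on this annulus. By Proposition \ref{prop_Vderiv}, $\phi\ge 0$ with strict inequality off the origin, and $\phi$ only blows up (to $+\infty$) at the binary-collision points on the outer circle; since the annulus is compact and avoids the origin, $\phi$ attains a positive minimum $\phi_{\min}>0$. With $r\le c_2$ this gives $F_1\ge\fr{1}{r}\phi\ge \phi_{\min}/c_2=:F_{\min}>0$. Similarly $\|\xi\|$, as a function of $(x,y)$, is continuous and nowhere zero on the annulus, so $\k$ is bounded above by some $\k_{\max}$, and again with $r\le c_2$ one gets $\fr{2}{\k r^2}\ge \fr{2}{\k_{\max}c_2^2}=:a>0$. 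Writing $A=-p_1\ge 0$, the system then obeys the two differential inequalities $\dot A=F_1 z\ge F_{\min}z$ and $\dot z=-\fr{2}{\k r^2}A\le -aA$, valid for as long as $z>0$.

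Finally I would integrate to force a zero. Set $W(t)=\int_0^t z\,d\tau\ge 0$, so $\dot W=z$ and $W(0)=0$, $\dot W(0)=z(0)>0$. Integrating the first inequality gives $A(t)\ge F_{\min}W(t)$, and feeding this into the second yields $\ddot W=\dot z\le -aA\le -\omega^2 W$ with $\omega^2:=aF_{\min}$. Comparing with the honest solution $\bar W(t)=\fr{z(0)}{\omega}\sin(\omega t)$ of $\ddot{\bar W}+\omega^2\bar W=0$: the difference $D=\bar W-W$ satisfies $\ddot D+\omega^2 D\ge 0$ with $D(0)=\dot D(0)=0$, so the variation-of-constants formula $D(t)=\fr{1}{\omega}\int_0^t\sin(\omega(t-s))\,(\ddot D+\omega^2 D)(s)\,ds$ shows $D\ge 0$ on $[0,\pi/\omega]$, the kernel being nonnegative there. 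Hence $0\le W(t)\le\bar W(t)$ on $[0,\pi/\omega]$, while $\bar W(\pi/\omega)=0$; if $z$ stayed positive throughout $[0,\pi/\omega]$ then $W(\pi/\omega)>0$, a contradiction. Therefore $z$ vanishes at some $t_0\le T_0:=\pi/\omega$, and $T_0$ depends only on $c_1,c_2$ through $F_{\min}$ and $a$.

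The step I expect to be the main obstacle is the uniform frequency bound, not the comparison (which is routine once the inequalities are in place). One must verify that $F_1\ge F_{\min}$ is genuinely uniform across the entire annulus, including arbitrarily near the binary collisions (where $\phi\to+\infty$, harmless since only a lower bound is needed) and arbitrarily near the collinear circle $z=0$. One should also note that while $z>0$ the configuration is non-collinear and hence collision-free, so the vector field is smooth and the solution does continue up to the first syzygy at $t_0$; the assumed bound $r\le c_2$ is exactly what keeps $a$ and $F_{\min}$ bounded away from $0$, and is the only place the boundedness hypothesis on $r$ enters.
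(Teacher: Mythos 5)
Your core argument is correct and rests on exactly the same foundations as the paper's proof: the planar system (\ref{eq_zp1}), the sign analysis ($p_1(0)\le 0$ and $\dot p_1=-F_1z\le 0$ while $z\ge 0$) giving monotone decrease of $z$ and confinement of the shape to a compact annulus avoiding the origin, and the uniform positive lower bounds for $F_1$ and $2/(\kappa r^2)$ obtained from compactness together with $0<r\le c_2$ (the blow-up of $\phi$ at the binary collision points is indeed harmless, as you note). The only real difference is the finishing step. The paper introduces a clockwise angle $\alpha_1$ in the $(z,p_1)$-plane, observes that $(z^2+p_1^2)\dot\alpha_1=F_1z^2+\tfrac{2}{\kappa r^2}p_1^2$ is bounded below by a positive multiple of $z^2+p_1^2$ on the set $\{0\le z\le c_1,\ 0<r\le c_2\}$, and concludes that the phase point, starting in the fourth quadrant, rotates at angular speed at least $k$ and therefore reaches the half-line $\{z=0,\,p_1\le 0\}$ (at most a quarter turn away) within time $\pi/(2k)$. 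Your Sturm--Duhamel comparison of $W=\int_0^t z\,d\tau$ with $\tfrac{z(0)}{\omega}\sin(\omega t)$ accomplishes the same thing with one extra integration; both arguments are valid and consume the identical quantitative input. One small repair: take $F_{\min}$ and $\kappa_{\max}$ over the annulus $1-c_1\le x^2+y^2\le 1$ rather than over the $z(0)$-annulus, so that $T_0$ genuinely depends only on $(c_1,c_2)$; this costs nothing since $z(0)<c_1$.

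The one place where your argument has a real gap is the final sentence, on existence. The implication ``while $z>0$ the configuration is collision-free, hence the solution continues up to the first syzygy'' does not cover the case in which the maximal interval of existence $[0,t^*)$ ends with $z(t)>0$ for all $t<t^*$ but $z(t)\to 0$ as $t\to t^*$, i.e.\ the orbit terminates in a double collision: such an orbit is collision-free throughout its entire (open) lifetime, yet $z$ never attains the value $0$ at a time where the flow is defined, so your comparison argument by itself does not produce the required $t_0$. Closing this requires exactly what the paper supplies in its last paragraph: the classical fact that the only finite-time singularities of the three-body problem are collisions; the classification of triple-collision limit shapes, with Lagrange excluded because $z$ is non-increasing from $z(0)<1$ and Euler excluded because orbits tending to Eulerian triple collision lie in the invariant collinear manifold, so that the only possible breakdown is a double collision; and the regularizability of double collisions together with the convention that they count as syzygies, which lets one set $t_0=t^*$ with $z(t_0)=0$. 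With that paragraph added, your proof is complete.
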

\begin{proof}
Consider the projection of the solution to the $(z,p_1)$ plane. By hypothesis, the initial point $(z(0),p_1(0))$ lies in the fourth quadrant of the plane.   Since $F_1\ge 0$,  (\ref{eq_zp1}) shows that $z(t)$ decreases monotonically  on any time interval $[0,t_0]$ such that $z(t)\ge 0$ so $0\le z(t)\le c_1$ holds.  An upper bound for $t_0$ will be now be found.  

Let $\a_1$ denote a clockwise angular variable in the $(z,p_1)$ plane. Then (\ref{eq_zp1}) gives
$$(z^2+p_1^2)\dot \a_1 = F_1(r,x,y,\dot x,\dot y) z^2 + \frac{2}{\k r^2} p_1^2\ge \frac{1}{r}\phi(x,y) z^2 + \frac{2}{\k r^2} p_1^2.$$
On the set where $0\le z\le c_1$ and $0<r \le c_2$,  the coefficients of $z^2$ and $p_1^2$ of the right-hand side each have positive lower bounds. Hence there is a constant $k>0$ such that $\dot \a_1 \ge k$ holds on the interval $[0,t_0]$.  It follows that $t_0 \le \frac{\pi}{2 k}$.   

It remains to show that the solution actually exists long enough to reach syzygy.  It is well-known that the only singularities of the three-body problem are due to collisions.  Double collisions are regularizable (and in any case, count as syzygies).  Triple collision orbits are known to have shapes approaching either the Lagrangian or Eulerian central configurations.  The Lagrangian case is ruled out by the upper bound on $z(t)$.  
 Eulerian triple collisions can only occur for orbits in the invariant collinear manifold, so this case is also ruled out. 
\end{proof}

The next result gives a uniform bound on   time to syzygy for solutions    far from triple
collision.    It is predicated on the well-known fact that if  $r$ is large and the energy is negative
then configuration space is split up into three disjoint regions, one for each choice of binary pair,
and   within each region that binary pair moves approximately in a bound Keplerian motion.  
The approximate period of that motion is obtained from knowledge of the two masses and the
percentage of the total energy involved in the binary pair motion.  The `worst' case, i.e. longest
period, is achieved by taking the pair to be that with  greatest   masses,
in a parabolic escape to infinity so that all the energy $-h$ is involved in their near Keplerian motion,
and thus the kinetic energy of the escaping smallest mass is tending to zero.  
This limiting `worst case' period is 
$$\tau_* = {1 \over {(2 h)^{3/2}} }[ {{m_i ^2 m_j ^2}\over {m_i + m_j}}]^{3/2}$$
where  the excluded mass $m_k$ is the smallest of the three.

Here is a precise proposition. 
\begin{proposition}\label{prop_unbounded}  
Let $\tau_*$ be the constant above and let $\beta$ be any positive constant less than $1$. Then there is a (small) positive constant
$\epsilon_0 = \epsilon_0 (\beta)$ such that all solutions 
 with    $r(0) \ge { 1 \over {2 \epsilon_0}}$, energy $-h <0 $, and angular momentum $0$
  have a syzygy within the  time interval $[0, {1 \over \beta} \tau_*]$.
  Moreover, this syzygy occurs before $r = {1 \over 2}  r(0)$,  so  that $r(t) \ge {1\over \epsilon_0}$
 at this syzygy. 
   \end{proposition}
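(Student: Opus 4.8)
The plan is to exploit the near-integrable structure of the three-body problem at large size. Fix the energy $-h<0$ and angular momentum $0$. For $r(0)$ large (equivalently $I=r^2$ large) the configuration has a unique \emph{tight pair}: since $I=\frac1m(m_1m_2r_{12}^2+m_1m_3r_{13}^2+m_2m_3r_{23}^2)$ is large while $U\ge h$ (forced by $K\ge 0$ and $K-U=-h$), exactly one mutual distance stays bounded and the other two are comparable to $\sqrt I$. Relabel so that $\{i,j\}$ is the tight pair and use Jacobi coordinates (\ref{eq_Jac}) adapted to it, so that $\xi_1$ is the inner relative vector (with $r_{ij}=|\xi_1|$ bounded) and $\xi_2$ points from the pair's center of mass to the third body, with $R:=|\xi_2|\sim r$ large. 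In these coordinates a syzygy is exactly $\im(\overline{\xi_1}\,\xi_2)=0$, i.e. the inner axis $\theta_1=\arg\xi_1$ is aligned mod $\pi$ with the direction $\theta_2=\arg\xi_2$ to the third body. The whole proof reduces to showing that $\theta_1$ rotates through more than $\pi$ relative to $\theta_2$ within one inner Keplerian period, which is at most $\tau_*$.

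First I would establish that the inner pair is \emph{bound}. In Jacobi coordinates the kinetic energy splits with no cross term, $K=K_1+K_2$ with $K_1=\mu_1|\dot\xi_1|^2$, $K_2=\mu_2|\dot\xi_2|^2$, while $U=\frac{m_im_j}{|\xi_1|}+U_{\mathrm{ext}}$ with $U_{\mathrm{ext}}=O(1/R)$. Writing $E_{\mathrm{bin}}=K_1-\frac{m_im_j}{|\xi_1|}$ and $E_{\mathrm{ext}}=K_2-U_{\mathrm{ext}}$, energy conservation gives $E_{\mathrm{bin}}+E_{\mathrm{ext}}=-h$. Since $K_2\ge 0$ we get $E_{\mathrm{bin}}=-h-K_2+U_{\mathrm{ext}}\le -h+U_{\mathrm{ext}}$, so for $R$ large $E_{\mathrm{bin}}\le -h/2<0$: the inner pair lies on a bounded Kepler orbit with $|E_{\mathrm{bin}}|\ge h-O(1/R)$. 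Kepler's third law then bounds its period; the supremum over admissible masses and energies, attained in the limiting parabolic escape of the lightest body ($K_2\to 0$, all the energy in the heaviest pair, $E_{\mathrm{bin}}\to -h$), is exactly the constant $\tau_*$. This both identifies $\tau_*$ and shows the inner period is $\le\tau_*$, up to an $O(1/R)$ error that the factor $1/\beta$ with $\beta<1$ is designed to absorb.

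Next comes the angular estimate, which I would make robust using the vanishing of the total angular momentum rather than delicate Kepler formulas. From $\omega=\im\metrictwo{\xi,\dot\xi}=\mu_1\im(\overline{\xi_1}\dot\xi_1)+\mu_2\im(\overline{\xi_2}\dot\xi_2)=0$, the inner and outer angular momenta are exactly opposite, so $\dot\theta_2=\im(\overline{\xi_2}\dot\xi_2)/R^2=-\mu_1\im(\overline{\xi_1}\dot\xi_1)/(\mu_2R^2)$, whence $|\dot\theta_2|\le \mu_1 d\,|\dot\xi_1|/(\mu_2R^2)$, where $d$ bounds $|\xi_1|$. Integrating over one inner period, $\int|\dot\theta_2|\,dt\le\frac{\mu_1 d}{\mu_2R^2}\int|\dot\xi_1|\,dt=O(1/R^2)$, since the inner arc length over a period is bounded by the ellipse perimeter no matter how fast the pair moves. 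Thus $\theta_2$ is nearly constant over one inner period in every regime. Meanwhile the unperturbed inner true anomaly increases by $2\pi$ per period, and the third body contributes only a tidal perturbation of size $O(m_k d/R^3)$ to the inner equation, whose cumulative effect over a period of length $\le\tau_*$ is $O(1/R^3)$. Hence for $\epsilon_0$ small (i.e. $R$ large) the perturbed $\theta_1$ still sweeps more than $\pi$ while $\theta_2$ barely moves, so $\theta_1-\theta_2$ crosses a multiple of $\pi$ and a syzygy occurs within $[0,\tau_*]\subset[0,\tau_*/\beta]$.

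Finally I would control $r$ and flag the main difficulty. The syzygy occurs within the (possibly very short) inner period, and over that time the outer displacement is small: when $|E_{\mathrm{bin}}|$ is large the inner period $\sim|E_{\mathrm{bin}}|^{-3/2}$ is short while the outer speed is only $\sim|E_{\mathrm{bin}}|^{1/2}$, so the displacement $\sim|E_{\mathrm{bin}}|^{-1}\to 0$; when $|E_{\mathrm{bin}}|\approx h$ the outer speed is bounded and the time is $\le\tau_*$. In all cases the relative change of $R\sim r$ over the time to syzygy tends to $0$ as $\epsilon_0\to 0$, giving $r(t)\ge\frac12 r(0)$ up to the syzygy. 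The hard part is \emph{uniformity}: the inner Kepler orbits range over the non-compact family $|E_{\mathrm{bin}}|\in[h,\infty)$ and all eccentricities, and the perturbation and angle-sweep estimates must hold uniformly, including near perihelion of nearly-radial orbits where the inner force blows up. I would treat this by the Kepler scaling $\xi_1\mapsto\lambda\xi_1,\ t\mapsto\lambda^{3/2}t$ to normalize the semi-major axis, reducing to a compact family once eccentricities near $1$ are excluded; the excluded nearly-radial inner orbits pass near binary collision, which is itself a syzygy, so they reach syzygy even sooner. Assembling these cases yields the uniform constant $\epsilon_0(\beta)$ asserted in the proposition.
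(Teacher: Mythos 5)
Your overall skeleton --- isolate the tight pair, show the inner motion is a bound Kepler orbit with $E_{\mathrm{bin}}\le -h+O(1/R)$, identify $\tau_*$ via the parabolic-escape limit, use $\omega=0$ to show the outer direction is nearly frozen over one inner period, and force $\theta_1-\theta_2$ through a multiple of $\pi$ by an inner angular sweep --- is essentially the same as the paper's. But there is a genuine gap exactly at the point you flag and then dismiss: inner orbits with small angular momentum (eccentricity near $1$). Two problems. First, your patch ``the excluded nearly-radial inner orbits pass near binary collision, which is itself a syzygy'' is false: only \emph{exact} collinearity (with exact binary collision as a special case) is a syzygy, and a nearly-radial pair with small nonzero angular momentum never collides; proximity to the collision shape is not collinearity. (It is in fact true that a close approach forces an exact syzygy, but that is precisely what must be proved.) Second, your sweep argument is not uniform in this regime: an $e\approx 1$ inner orbit does all of its sweeping in a tiny window near perihelion, where $\dot\theta_1=\ell_1/(\mu_1 r_{12}^2)$ is a quotient of two small quantities; the tidal torque integrated over the approach can change $\ell_1$ by an amount comparable to $\ell_1$ itself, so the perturbed orbit may ``reflect'' (sign of $\ell_1$ flips) instead of sweeping through, and then $\theta_1-\theta_2$ never crosses a multiple of $\pi$ during that passage. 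Your proposed Kepler rescaling normalizes the semi-major axis but does nothing about eccentricity tending to $1$, so the family you must control stays non-compact in exactly the dangerous direction.

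The paper's proof is built around this difficulty, and the device that removes it is the Levi-Civita substitution $u=z^2$ together with the time rescaling $d/d\tau=|z|^2\,d/dt$. After reduction and this regularization, the inner dynamics near infinity becomes a $C^1$, $O(\epsilon)$-perturbation of a planar harmonic oscillator (uniformly over a time span much longer than the oscillator period, by a Gronwall estimate keeping $(x,y)=(1/\rho,\mu_2\dot\rho)$ in a rectangle --- this is also what gives the ``syzygy occurs before $r=\tfrac12 r(0)$'' clause), and syzygy becomes the condition that $z$ meets the set $\R\cup i\R$. The key geometric fact is that \emph{every} centered oscillator orbit --- including the degenerate segment through the origin, which is exactly your near-radial case --- crosses the real or the imaginary axis transversally at an angle of at least $45$ degrees once per half-period; binary collision $z=0$ is a regular point of the regularized flow, so this transversal crossing survives the $O(\epsilon)$ perturbation with no loss of uniformity near collision. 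To repair your argument in physical variables you would have to prove the analogous uniform crossing statement for close approaches, which in effect forces you to regularize anyway.
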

   
   The final  sentence of the proposition  is added because if initial conditions are such that  the   far  mass
   approaches  the binary pair at a high speed  then  the perturbation conditions required in  the proof will   
  be violated quickly: $r(t)$ will  become $O(1)$ in a short time, 
   well before the required syzygy   time ${1 \over \beta} \tau_*$.  In this case the approximate  Keplerian frequency of the bound
   pair is also  accordingly high, guaranteeing a   syzygy  well before   perturbation estimates
   break down and well before the required syzygy time.

   \begin{proof} The proof is  perturbation theoretic and divides   into two parts.
In the first part we derive the equations of motions in
a coordinate system quite similar to the one which   Robinson  used to compactify the infinity corresponding to $r \to \infty$ at constant $h$.
In the second part  we use these equations to derive the result.

Part 1. Deriving the equations in the new variables. 
We will use coordinates adapted to studying the dynamics near infinity which are a variation on those introduced by McGehee  (\cite{Mc2}) and then modified for  the planar three-body problem by Easton, McGehee and Robinson (\cite{Easton, EastonMcGehee, Robinson}).   Going back to the Jacobi coordinates $\xi_1, \xi_2$
 set 
$$\xi_1 = u e^{i \theta},  \xi_2 = \rho e^{i\theta}.$$
thus defining coordinates 
 $(\rho, u, \theta) \in \R^+ \times \C \times S^1$. The
 variables   $\rho, u$   coordinatize    shape space
while   $\theta$ coordinatizes the overall rotation in inertial space.  
Then
$$r^2 = \mu_1 |u|^2 + \mu_2 \rho^2.$$
The reduced kinetic energy (i.e. metric on shape space) is given by 
$$K_0 = \frac{\mu_1}{2} |\dot u|^2 +\frac{\mu_2}{2}  \dot \rho^2 -  \frac{\mu_1^2}{2r^2} ( u \wedge \dot u )^2 .$$
Here
$$u \wedge \dot u = u_1\dot u_2-u_2 \dot u_1 = Im(\bar{u}\dot u).$$
($K_0$  can be computed two ways: either plug the expressions for the $\dot \xi_i$ in terms of $\dot \rho, \dot u, \dot \theta, ...$
 into the expression  for the kinetic energy
and then minimize over $\dot \theta$, or  plug these same expressions into the reduced metric expression.)   
Next we make a Levi-Civita transformation by setting $u= z^2$ which gives
$$K_0 =2\mu_1|z|^2 |\dot z|^2 +\frac{\mu_2}{2}  \dot \rho^2 -  \frac{2\mu_1^2|z|^4}{r^2} ( z \wedge \dot z )^2 .$$

We want to introduce the conjugate momenta and take a Hamiltonian approach.
We can write
$$K_0 =  2\mu_1|z|^2\langle A(\rho,z))\dot z,\dot z\rangle + {1 \over 2}\mu_2 \dot\rho^2$$
where $A$ is the symmetric matrix $A=I-B$ with
$$B(\rho,z)  = \frac{\mu_1|z|^2}{r^2} \m{z_2^2&-z_1z_2\\ -z_1z_2&z_1^2}.$$
Here $z_1, z_2$ denote the real and imaginary parts of $z$, not new complex variables.
The conjugate momenta are
$$\eta = 4 \mu_1 |z|^2 A \dot z \qquad   y = \mu_2\dot\rho.$$
If we write
$$A^{-1} = I + \mu_1 f(\rho,z)$$
we find
$$K_0 = \frac{1}{8\mu_1 |z|^2}|\eta|^2 +\frac{1}{2\mu_2} y^2 +\frac{1}{8|z|^2}\langle f(\rho,z)\eta,\eta\rangle   .$$
For later use, note that $\mu_1 f =  B + B^2 + \ldots$ is a positive semi-definite symmetric matrix and $f=O(|z|^4/r^2)$.

The  negative of the   potential energy is 
$$U(\rho,z) = \frac{m_1 m_2}{|z|^2} + \frac{m \mu_2}{\rho}  + g(\rho,z)$$
where the ``coupling term'' $g$ is 
\[
\begin{array}{lcl} g(\rho,z) & = & \dsty {{m_1 m_3} \over {\| \rho + \nu_2 z^2 \|}} + 
{{m_2 m_3} \over { \| \rho - \nu_1 z^2 \|}}- {{m \mu_2} \over{\rho}} \\
 \end{array} = O(|z|^4/\rho^3)
\] 
For $r > r_0$ sufficiently large the Hill region breaks up into three disjoint regions. 
We are interested for now in the region   centered on the 12 binary collision ray. In this
case, for $r> r_0$ sufficiently large 
one finds that $|z|$ is bounded by a constant depending only on the masses and $h$
and the choice of $r_0$. This bound  on $|z|$ tends to a nonzero constant as $r_0 \to \infty$.
It then  follows from the identity $r^2 = \mu_1 \rho^2 + \mu_2 |z|^4$   that $r = \sqrt{\mu_2} \rho + O(1)$ for $r$ large.

Next we  complete the regularization of  the binary collision by means of the  the time rescaling 
${{d} \over {d \tau}} = |z|^2 {{d} \over {d t}} $.  Using the Poincar\'e trick, the rescaled solutions with energy $-h$ become the zero-energy solutions of the Hamiltonian system with Hamiltonian function, 
$$
\begin{aligned}
\tilde H &= |z^2|(K_0 - U + h) \\
&=\frac{1}{8\mu_1}|\eta|^2 +\frac{|z|^2}{2\mu_2}y^2 +\frac{1}{8}\langle f(\rho,z)\eta,\eta\rangle 
 - m_1m_2   -  \frac{m \mu_2|z|^2}{\rho} - g(\rho,z)|z|^2 +h|z|^2 .
\end{aligned}
$$

Computing Hamilton's equations,  making the additional substitution 
$$x = 1/ \rho$$
to move infinity to $x=0$, and writing $\, ' $ for $d/d \tau$   gives the differential equations:
\begin{equation}\label{Robinsoneqn}
\begin{aligned}
 x^{\prime} &= -{|z|^2 \over \mu_2} x^2 y \\
 y ^{\prime} &= - m \mu_2  |z|^2x^2 + |z|^2 g_{\rho} - \frac{1}{8} \langle f_{\rho} \eta,\eta \rangle\\  
 z ^{\prime}  &= \frac{1} {4 \mu_1} \eta +   \frac{1} {4}f\eta \\
  \eta^{\prime} &= -2(h+\frac{y^2}{2\mu_2}-m\mu_2 x - g)z - \frac{1}{8} \langle f_{z} \eta,\eta \rangle + |z|^2g_z \\
\end{aligned}
\end{equation}
where the subscripts on $f$ and $g$ denote partial derivatives.  These satisfy the bounds:
$$
\begin{aligned}
f = O(x^2 |z|^4)\quad f_z = O(x^2|z|^3)\quad  f_{\rho} = O(x^3|z|^4)\\  
g  = O(x^3|z|^4)\quad  g_z = O(x^3|z|^3)\quad  g_{\rho} = O(x^4|z|^4).
\end{aligned}$$
The energy equation is $\tilde H = 0$.

Infinity has become   $x=0$, an invariant manifold.  At infinity we have  
  $x'=y'=0$ while  other variables satisfy
$$
\begin{aligned}
 z ^{\prime}  &= \frac{1} {4 \mu_1} \eta  \\
  \eta^{\prime} &= -2(h+\frac{y^2}{2\mu_2})z.
\end{aligned}
$$
Since $y$ is constant this is the equation of a two-dimensional harmonic oscillator.

Part 2.  Analysis.  Observe that the configuration is 
in  syzygy if and only if  the   variable $u$ is real .
 Since $u =z^2$ we have   syzygy at   time $t$  if and only if   $z(t)$ intersects either the real axis or the imaginary
  axis.  If $z$'s dynamics were exactly that of a harmonic oscillator
  then it would intersect one or the other axis (typically both)
  twice per period.  We argue that $z$ is sufficiently close to an oscillator
  that these intersections persist.
  
  We begin by establishing uniform bounds on $z$ and $\eta$ valid for all $x$ sufficiently small.
  These come from the energy. 
  We rearrange the expression for energy into the form
   $$\frac{1}{8\mu_1}|\eta|^2 +\frac{|z|^2}{2\mu_2}y^2 +\frac{1}{8}\langle f(\rho,z)\eta,\eta\rangle 
 +(h -  m\mu_2 x  -g(x,z) )|z|^2 = m_1m_2.$$
  Choose $\epsilon_0$ so that $x \le \epsilon_0$ implies $|m\mu_2 x+ g | \le h/2$.
  By the positive semi-definiteness of $f$ we have
   $$\frac{1}{8\mu_1}|\eta|^2   +(h/2 )|z|^2 \le m_1m_2$$
   which gives our uniform bounds on $z, \eta$. 

It now follows from equations~(\ref{Robinsoneqn})
that for all $\epsilon < \epsilon_0$ and $x \le \epsilon$ we have
\begin{equation}\label{eqnEstimates}
\begin{aligned}
 |x^{\prime}| \le C x^2 |y| \\
  |y ^{\prime}| \le C x^2  \\
  z ^{\prime}  = {1 \over {4 \mu_1}} \eta +   O(\epsilon^2) \\
   \eta^{\prime} = 2 H_{12} z + O(\epsilon^2)  
\end{aligned}
\end{equation}
where $H_{12} = -h-\frac{y^2}{2\mu_2}+m\mu x +g$, a quantity which represents the energy of the binary formed by masses $m_1,m_2$.  Here $C$ is a constant depending only on the masses, the energy and $\epsilon_0$.

Fix an initial condition $x_0, y_0, z_0, \eta_0$ 
with $x_0 < \epsilon/2$.  Write $(x(t), y(t), z(t), \eta(t))$
for the corresponding solution. 
For $C$ a positive constant, write  ${\mathcal R} = {\mathcal R}_{C, \epsilon}$ for  the rectangle
$0 \le x \le  \epsilon,   |y-y_0| \le  C\epsilon$ in the $xy$ plane.  
We  will show there exists   $C_1$ depending only on the masses,
$h$, on the constant $C$ and $y_0$ such that  the projection $x(t), y(t)$  of our solution lies in  ${\mathcal R}_{C, \epsilon}$
for all times $t$ with $|t| \le 1/C_1 \epsilon$. 
Suppose that the projection of our  solution leaves the rectangle ${\mathcal R}$ in some time $t$.  
If it first leaves through the y-side, then we  have $C \epsilon = |y(t) - y_0| = |\int \dot y dt | < C \epsilon^ 2\int dt = C \epsilon^2 t$,
asserting that $1/\epsilon  \le t$.  Thus it takes at least a   time $1/\epsilon$ to escape out the $y$-side. 
To analyze escape through the $x$-side at $x = \epsilon$
we  enlist  Gronwall.   Let $C_2(y_0)= C(|y_0|+C\epsilon)$.  Compare  $x(t)$ to the solution $\tilde x$ to $\dot x = C_2 x^2$
 sharing initial condition with $x(t)$, so that
$\tilde x(0) = x_0$.  The exact solution   is $\tilde x = x_0/ (1 - x_0 C_2 t)$. Gronwall asserts $x(t) \le \tilde x (t)$ as long as 
$x(t), y(t)$ remain in the rectangle (so that the estimates (\ref{eqnEstimates}) are valid).    But $\tilde x(t) \le \epsilon$ for
 $t \le 1/\epsilon C_2$. Consequently  it takes our projected solution at least time $t = 1/\epsilon C_2$
to escape out of the $x$-side, and thus $(x(t), y(t))$  lies within  the rectangle for time $|t| \le 1/ \epsilon C_1$, with
$1/C_1 = min\{1, 1/C_2(y_0) \}$.

We now   analyze the oscillatory part of  equations ~(\ref{Robinsoneqn}). 
We have
$$H_{12} = -h-\frac{y^2}{2\mu_2}+m\mu x +g = - h -\frac{y^2}{2\mu_2} + O(\epsilon).$$
and let $H_{12}^0 = - h  -{1 \over 2} y_0^2$.  Then as long as   $(x,y) \in {\mathcal R}_{C,\epsilon}$
we have the bound 
$$| H_{12}^0 - H_{12} (x,y, z, \eta)| = O(\epsilon).$$ 
holds.    Thus the difference between the vector field defining our equations
and the  ``frozen oscillator''   approximating equations 
\begin{equation}
\label{frozenOsc}
\begin{aligned} 
  z ^{\prime}  = {1 \over {4 \mu_1}} \eta  \\
   \eta^{\prime} = 2 H_{12} ^0 z 
   \end{aligned}
\end{equation} 
which we get by throwing out the $O(\epsilon^2)$ error terms in (\ref{eqnEstimates}) and replacing  $H_{12}$ by $H_{12}^0$ is $O(\epsilon)$.

Now the period of the frozen oscillator is $T_0 = 2\pi \sqrt{2\mu_1}/\sqrt{|H_{12}^0|}$.  On the other hand we remain in $ {\mathcal R}_{C, \epsilon}$ for at least time $T_ {\mathcal R} = 1/ \epsilon C_1$.  Both of these bounds depend on $y_0$ but we have
$$\frac{T_0}{T_ {\mathcal R}} = \epsilon \frac{2\pi \sqrt{2\mu_1}\max\{1,C_2(y_0)\}}{\sqrt{h+y_0^2/2}} \le C_3 \epsilon$$
where $C_3$ does not depend on $y_0$.  Moreover we have a uniform upper bound 
$$T_0\le T_{\max} = 2\pi \sqrt{2\mu_1}/\sqrt{h}.$$

Hence for $\epsilon_0>0$ sufficiently small and $x_0<\epsilon/2, \epsilon<\epsilon_0$, a solution of~(\ref{eqnEstimates}) remains in 
${\mathcal R}_{C, \epsilon}$ for at least time $T_{\max}$ and  the difference between
the component $z(t)$ of our solution and the corresponding solution $\tilde z$ to the linear frozen oscillator equation (\ref{frozenOsc})  is of order $\epsilon$ in the $C^1$-norm: $C^1$ because we also get the $O(\epsilon)$ bound on $\eta = 4 \mu_1 z'$.   Now, any solution $\tilde z$ to the frozen
oscillator crosses either the real or imaginary axis, transversally, indeed at an angle of 45 degrees or more,
once per half-period. (The worst case scenario is when the oscillator is constrained to a line segment).  Thus the same
can be said of the real solution $z(t)$ for $\epsilon_0$
small enough: it crosses either the real or imaginary axis at least once per half period. 

Finally,  the time bounds involving $\tau_*$  of the proposition  were stated in the   Newtonian time.
To see these bounds use the inverse   Levi-Civita
transformation, and note that  a half-period of the Levi-Civita harmonic oscillator
corresponds to a full period of an approximate Kepler problem with energy $H_{12}^0$.
This   approximation becomes better and better as $\epsilon \to 0$.
Moreover, the Kepler period decreases monontonically with increasing absolute value of the Kepler  energy
so that the maximum period corresponds to 
the infimum of   $|H_{12}^0|$ and this is achieved by setting  
$y = 0$ (parabolic escape), in which case $H_{12}^0=-h$ and we get the claimed value of $\tau_*$ with $\beta = 1$.
(Letting  $\beta \to 1$ corresponds to  letting $\epsilon_0 \to 0$. )
\end{proof}

These  two propositions, Proposition \ref{prop_unbounded} and Proposition \ref{prop_bounded}, lead to Montgomery's result:  

\begin{proposition}\label{prop_forwardsyzygy}
Consider an orbit with initial conditions satisfying $0<z(0)\le 1$ and $r(0)>0$.  Either the orbit ends in Lagrangian triple collision with $z(t)$ increasing monotonically to 1, or else there is some time $t_0>0$ such  that $z(t_0)=0$.
\end{proposition}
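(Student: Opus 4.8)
The plan is to combine the two preceding propositions to cover all orbits, splitting according to whether the orbit stays in a bounded region of size-space or escapes to large $r$. First I would set up a dichotomy based on the behavior of $r(t)$. Fix the initial condition with $0 < z(0) \le 1$ and $r(0) > 0$. If $z(t)$ reaches $0$ at some finite time we are done, so assume for contradiction that $z(t) > 0$ for all $t \ge 0$; the goal is then to force the orbit into Lagrangian triple collision with $z(t) \nearrow 1$. The sign structure of \eqref{eq_zp1} is the engine here: since $F_1 \ge 0$, the quantity $p_1$ is nonincreasing whenever $z > 0$, so the $(z, p_1)$ trajectory can only rotate clockwise, and this is exactly what Proposition \ref{prop_bounded} exploits to produce a syzygy in bounded time \emph{provided} $r$ stays bounded.

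Next I would dispose of the large-$r$ case using Proposition \ref{prop_unbounded}. If the orbit ever satisfies $r(t_1) \ge 1/(2\epsilon_0)$ for the $\epsilon_0 = \epsilon_0(\beta)$ of that proposition (say with a fixed $\beta$, e.g. $\beta = 1/2$), then restarting the clock at $t_1$ gives a syzygy within time $\tfrac{1}{\beta}\tau_*$, contradicting $z > 0$ for all time. Hence under our standing assumption $r(t)$ must remain bounded, $r(t) \le c_2 := 1/(2\epsilon_0)$, for all $t \ge 0$. This is precisely the hypothesis of Proposition \ref{prop_bounded}. The remaining subtlety is that Proposition \ref{prop_bounded} also requires the trajectory to start (or eventually arrive) in the fourth quadrant of the $(z,p_1)$ plane, i.e. $\dot z \le 0$, equivalently $p_1 \le 0$, together with an upper bound $z \le c_1 < 1$.

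This leads to the main obstacle, which is the case $\dot z(0) > 0$: an orbit whose shape is initially moving \emph{toward} the Lagrange pole. Here $z$ may increase past any fixed $c_1 < 1$, so Proposition \ref{prop_bounded} does not apply directly, and I must show the orbit either turns around (after which $p_1 < 0$ and the bounded-$r$ argument finishes it off) or else climbs monotonically to $z = 1$. For this I would again use that $p_1$ is monotonically nonincreasing along $z > 0$ by \eqref{eq_zp1}: either $p_1$ becomes negative at some finite time, putting us back in the fourth-quadrant situation handled above, or $p_1(t) \ge 0$ for all $t$, forcing $z(t)$ to be nondecreasing and bounded above by $1$ (since $z > 1$ would mean the shape has crossed the pole region, and one checks via the dynamics on the $z$-interval that $z=1$ is the Lagrange pole with $z$ not exceeding it under these sign conditions). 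In the latter subcase $z(t)$ increases monotonically to a limit $z_\infty \le 1$; I would then argue, using that the only syzygy-free limiting behavior is convergence to a central configuration (as invoked at the end of the proof of Proposition \ref{prop_bounded}), that $z_\infty = 1$ and the orbit tends to Lagrangian triple collision. The Eulerian alternative is excluded because it lives in the collinear invariant manifold $z = p_1 = 0$, incompatible with $z_\infty > 0$. Assembling these cases yields the stated dichotomy.
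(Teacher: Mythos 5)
Your skeleton is the same as the paper's: restart the clock in Proposition~\ref{prop_unbounded} to get the uniform bound $r(t)\le c_2$ along any syzygy-free orbit, and feed any arrival in the fourth quadrant of the $(z,p_1)$-plane into Proposition~\ref{prop_bounded}. Your dichotomy from the monotonicity of $p_1$ in (\ref{eq_zp1}) is also sound: either $p_1$ becomes negative in finite time (at which moment necessarily $z<1$, since $z=1$ means $(x,y)=(0,0)$ and hence $p_1=0$), and the fourth-quadrant case finishes the orbit off, or else $p_1\ge 0$ forever and $z(t)$ increases to some limit $z_\infty\le 1$.

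The gap is in how you close this second case. The principle you invoke there --- ``the only syzygy-free limiting behavior is convergence to a central configuration'' --- is not what is used at the end of the proof of Proposition~\ref{prop_bounded}. What is used there is Sundman's theorem: an orbit \emph{already known} to end in triple collision has shape converging to a Lagrangian or Eulerian central configuration; it enters that proof only to rule out singularities. It gives no information about an orbit not known to end in triple collision: a priori your Case B orbit could exist for all time with $r$ bounded and $z\nearrow z_\infty<1$, suffering no collision at all, and then Sundman says nothing. The statement you actually need --- that a syzygy-free, zero-angular-momentum orbit with bounded size must tend to Lagrangian triple collision --- is essentially the proposition being proved, so invoking it is circular. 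Two concrete arguments are missing, and they are exactly what the paper supplies. (i) $z_\infty=1$: if $z_\infty<1$, the orbit stays in $\{z(0)\le z\le z_\infty,\ 0<r\le c_2\}$, on which the clockwise angular speed $\dot\alpha_1$ in the $(z,p_1)$-plane has a positive lower bound (this estimate from the proof of Proposition~\ref{prop_bounded} needs only $z$ bounded away from $1$ and $r$ bounded, not $p_1\le 0$; and the orbit exists for all time there, since double collisions are excluded by $z\ge z(0)>0$ and triple collisions by $z_\infty<1$ together with non-collinearity), so the trajectory would rotate across $\{z=0\}$ in finite time --- a contradiction. (ii) Convergence to triple collision: pass to the blown-up system (\ref{eq_blowuprxy}), in which the part of the energy manifold with $z\ge z(0)>0$ is compact; the $\omega$-limit set is then a nonempty compact invariant subset of $\{z=1\}=\{(x,y)=(0,0)\}$, whose only invariant subsets are the Lagrange homothetic orbit and the restpoints $L_\pm$; since $W^s(L_+)\subset\{r=0\}$ (Proposition~\ref{prop_linearization}) while your orbit has $r>0$, it must converge to $L_-$, i.e.\ reach Lagrangian triple collision in finite original time with $z\to 1$. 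Without (i) and (ii), your Case B does not produce the stated alternative.
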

\begin{proof}
Consider a solution with no syzygies in forward time.  By proposition~\ref{prop_unbounded} there must be an upper bound on the size: $r(t)\le c_2$ for some $c_2>0$.  Next suppose $z(0)=1$, i.e., the initial shape is equilateral which means $(x(0),y(0))=(0,0)$.   If  $(\dot x(0),\dot y(0))= (0,0)$ the orbit is the Lagrange homothetic orbit which is in accord with the proposition.
 If $(\dot x(0),\dot y(0))\ne (0,0)$ then for all sufficiently small positive times $t_1$ one has $0<z(t_1)<1$ and $\dot z(t_1)<0$.   Then proposition~\ref{prop_bounded} shows that there will be a syzygy.  It remains to consider orbits such that $0<z(0)<1$.
 
 Suppose $0<z(0)<1$ and that the orbit has no syzygies in forward time.  It cannot be the Lagrange homothetic  orbit and, by the first part of the proof, it can never reach $z(t)=1$.  Proposition~\ref{prop_bounded} then shows that $\dot z(t)>0$ holds as long as the orbit continues to exist, so $z(t)$ is strictly monotonically increasing.   By proposition~\ref{prop_unbounded} there is a uniform bound $r(t)\le c_2$ for some $c_2$.  There cannot be a bound of the form $z(t)\le c_1<1$, otherwise a lower bound $\dot\a_1\ge k>0$ as
 in the proof of proposition~\ref{prop_bounded} would apply.  It follows that $z(t)\rightarrow 1$.

To show that the orbit tends to triple collision, it is convenient to switch to the blown-up coordinates and rescaled time of equations~(\ref{eq_blowuprxy}).   Since the part of the blown-up energy manifold with $z\ge z(0)>0$ is compact, the $\w$-limit set must be a non-empty, compact, invariant subset of $\{z=1\}$, i.e., $\{(x,y) = (0,0)\}$.  The only invariant subsets are the Lagrange homothetic orbit and the Lagrange restpoints $L_\pm$.  Since $W^s(L_+)\subset \{r=0\}$,  the orbit must converge to the restpoint $L_-$ in the collision manfold as $s\into\infty$.  In the original timescale, we have a triple collision after a finite time.
 \end{proof}

\subsection{Existence of Syzygies for Orbits in the Collision Manifold}
In the last subsection, it was shown that  zero angular momentum orbits with $r>0$ and not tending monotonically to Lagrange triple collision have a syzygy in forward time.   This is the basic result underlying the existence and continuity of the syzygy map.  However, to study the behavior of the map near Lagrange triple collision orbits we need to extend the result to orbits in the collision manifold $\{r=0\}$.   This entails using the equations (\ref{eq_blowuprxy}).  

Setting $z=1-x^2-y^2$ as before one finds
$$
\begin{aligned}
z'&= \frac{2}{\k} p_2\\
p_2' &= -(xV_x+yV_y)-(x'^2+ y'^2)(\k +\smfr12(x\k_x+y\k_y)) + \smfr12 v p_2.
\end{aligned}
$$
As in the last subsection, this can be written
\begin{equation}\label{eq_zp2}
\begin{aligned}
z' &= \frac{2}{\k} p_2\\
p_2' &= -F_2(x,y,x',y') z - \smfr12 v p_2
\end{aligned}
\end{equation}
where
$$F_2 =\phi(x,y) + c (x'^2+y'^2).$$
$F_2$ is a smooth function satisfying $F_2\ge 0$ with equality only when $x=y=x'=y'=0$.  Recall that ${}'$ denotes differentiation with respect to a rescaled time variable, $s$.  

To see which triple collision orbits reach syzygy, introduce a clockwise angle $\a_2$ in the $(z,p_2)$-plane.  Then
\begin{equation}\label{eq_quadform}
(z^2+p_2^2)\a_2' = F_2 z^2 + \frac{2}{\k} p_2^2 +\smfr12 v z p_2.
\end{equation}
The cross term in this quadratic form introduces  complications and may prevent certain orbits with $r=0$ from reaching syzygy. 

To begin the analysis, consider a case where the cross term is small compared to the other terms, namely orbits  with bounded size
near binary collision.

\begin{proposition}\label{prop_binarysyzygy}
Let $c_2>0$.  There is a neighborhood $U$ of the three binary collision shapes and a time $S_0(U,c_2)$ such that if an orbit has shape $(x(s),y(s))\in U$ and size  $r(s)\le c_2$ for $s\in[0,S_0]$ then there is a (rescaled) time $s_0\in [0,S_0]$ such that $z(s_0)=0$.
\end{proposition}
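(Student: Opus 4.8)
The plan is to run the same rotation argument used for Proposition~\ref{prop_bounded}, now applied to the blown-up system~(\ref{eq_zp2}); the only genuinely new ingredient is the cross term $\smfr12 v z p_2$ in the angular identity~(\ref{eq_quadform}). Introducing the clockwise angle $\a_2$ in the $(z,p_2)$-plane, I would show that on the region $\{(x,y)\in U,\ 0\le r\le c_2\}$ one has a uniform lower bound $\a_2'\ge\delta>0$. Since the syzygy set $\{z=0\}$ is the $p_2$-axis, which is met once in every half-turn, such a bound forces $z$ to vanish after the angle has increased by $\pi$, i.e. at some rescaled time $s_0\le S_0:=\pi/\delta$ (the angle is defined as long as $(z,p_2)\neq(0,0)$, and $(z,p_2)=(0,0)$ already means $z=0$). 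Everything therefore reduces to showing that the quadratic form on the right of~(\ref{eq_quadform}),
$$Q(z,p_2)=F_2\,z^2+\frac{2}{\k}\,p_2^2+\smfr12 v\,z p_2,$$
is positive definite with smallest eigenvalue bounded away from $0$ uniformly on $U$.

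To control $Q$ I would use two a priori estimates. Energy conservation~(\ref{eq_energystereo}) gives $\smfr12 v^2 = V-\smfr12\k({x'}^2+{y'}^2)-rh\le V$, hence $v^2\le 2V$ irrespective of the size (the hypothesis $r\le c_2$ serves only to confine the orbit to a controlled region). Also $F_2=\phi+c({x'}^2+{y'}^2)\ge\phi$. So it is enough to establish the purely shape-theoretic inequality $\k V\le 8\phi$ on $U$: writing $Q$ as the symmetric matrix with diagonal entries $F_2,\,2/\k$ and off-diagonal entry $v/4$, this inequality yields $\frac{v^2}{16}\le\frac{V}{8}\le\frac{\phi}{\k}\le\frac{F_2}{\k}$, so the determinant $\frac{2F_2}{\k}-\frac{v^2}{16}\ge\frac{F_2}{\k}>0$ and $Q$ is positive definite with a factor-of-two margin.

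The crux — and the reason one must localize near binary collision — is the comparison $\k V\ll\phi$ as the shape approaches a binary collision point. Take $b_{12}$: there $r_{12}\to0$ while $r_{13},r_{23}$ and $\|\xi\|$ stay bounded below, by~(\ref{eq_rijxy}) and~(\ref{eq_xisquared}). Inspecting the formula~(\ref{eq_phi}), the summands $g_1$ and $g_2$ each grow like $r_{12}^{-3}$, so $\phi$ blows up like $r_{12}^{-3}$, whereas $V\sim m_1m_2\|\xi\|/r_{12}$ grows only like $r_{12}^{-1}$ and $\k=3\mu_1\mu_2/\|\xi\|^4$ stays bounded; hence $\k V/\phi=O(r_{12}^2)\to0$ at $b_{12}$, and similarly at $b_{13},b_{23}$. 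Thus one can fix $U$ small enough that $\k V\le 8\phi$ throughout $U$. On such a $U$, $\k$ is bounded above by some $\k_{\max}$ and $\phi\ge\phi_{\min}>0$ (the collision shapes lie far from the Lagrange points, the only place $\phi$ vanishes), so $\lambda_{\min}(Q)\ge\det/\tr=(\k+2/F_2)^{-1}\ge\delta:=(\k_{\max}+2/\phi_{\min})^{-1}>0$, giving $\a_2'=Q/(z^2+p_2^2)\ge\delta$ and $S_0=\pi/\delta$, both depending only on $U$ and $c_2$. I expect this comparison estimate — pinning down the blow-up rates of $\phi$ versus $\k V$ and verifying the inequality uniformly over a whole neighborhood rather than merely at the three collision shapes — to be the main obstacle.
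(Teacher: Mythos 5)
Your proof is correct and takes essentially the same route as the paper's: the same clockwise angle $\alpha_2$ in the $(z,p_2)$-plane, the same symmetric matrix with entries $F_2$, $2/\kappa$, $v/4$, the same determinant-over-trace bound on its smallest eigenvalue, and the same blow-up comparison near binary collision ($F_2\ge\phi\sim r_{12}^{-3}$ versus $v^2=O(r_{12}^{-1})$ from the energy relation, with $\kappa$ bounded), yielding a uniform positive lower bound on $\alpha_2'$ and hence syzygy within a bounded rescaled time. The only differences are cosmetic: you package the asymptotics as the explicit inequality $\kappa V\le 8\phi$ and observe that $v^2\le 2V$ holds independently of $r$, so your $S_0$ depends only on $U$ -- a slight sharpening of the paper's statement.
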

\begin{proof}
 Let $\delta,\tau$ be the determinant and trace of the symmetric matrix of the quadratic form
$$\m{F_2& \smfr14 v\\ \smfr14v&2\k^{-1}}.$$
Iff $\delta, \tau>0$ then   the smallest eigenvalue of this matrix is greater than $2 \delta/\tau$, which yields the  lower bound
$$\a_2' \ge \fr {2\delta}\tau >0.$$
From (\ref{eq_rijxy}) it follows that near binary collision, one of the three shape variables $r_{ij}\approx 0$ while the other two satisfy $r_{ik}, r_{jk}\approx \sqrt{3}$.
Consider, for example, the binary collision $r_{12}=0$ at $(x,y)=(1,0)$.  One has $F_2\ge \phi(x,y)$ and
$\phi(x,y)\approx C/r_{12}^3$ for some constant $C>0$.  The matrix entry $\k^{-1}$ is bounded and $v$ can be estimates using the energy relation (\ref{eq_energystereo})
$$v^2 \le 2(V(x,y) - rh) = 2m_1m_2/r_{12} +O(1)$$
near $(x,y)=(1,0), r\le c_2$.
Using these estimates in the trace and determinant gives the asymptotic estimate
$$\a_2' \ge \frac{2F_2\k^{-1}-\smfr1{16}v^2}{F_2+2\k^{-1}} \approx 2\k^{-1}>0.$$
Similar analysis near the other binary collision points yields a neighborhood $U$ in which there is a positive lower bound for $\a_2'$.  This forces a syzygy ($z=0$) in a bounded rescaled time, as required.
\end{proof}

This result, together with some well-known properties of the flow on the triple collision manifold leads to a characterization of possible triple collision orbits with no syzygy in forward time.  

\begin{proposition}\label{prop_collisionsyzygy}
Consider an orbit with initial condition $r(0)=0$ and $0<z(0) \le 1$.  Either the orbit tends asymptotically to one of the restpoints on the collision manifold or there is a (rescaled) time $s_0>0$ such that $z(s_0)=0$.
\end{proposition}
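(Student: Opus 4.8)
The plan is to establish the contrapositive dichotomy: if $z(s_0)=0$ for some $s_0>0$ the second alternative holds and we are done, so I assume $z(s)>0$ for all $s\ge0$ and show the orbit then converges to one of the restpoints. The engine is the function $v$, which serves as a Lyapunov function on the collision manifold. Indeed, on $\{r=0\}$ the energy relation (\ref{eq_energystereo}) reads $V(x,y)=\smfr12 v^2+\smfr12\k(x'^2+y'^2)$, and substituting this into the $v$-equation of (\ref{eq_blowuprxy}) gives $v'=\smfr12\k(x'^2+y'^2)\ge0$, with equality only where $x'=y'=0$. Thus $v(s)$ is nondecreasing and strictly increasing off the restpoints (the Sundman inequality noted after Proposition~\ref{prop_linearization}).

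Next I would set up the basic dichotomy between compact and non-compact behavior. From the same energy relation, the fiber data $(v,x',y')$ is bounded exactly when $V(x,y)$ is bounded, and $V$ is bounded precisely when the shape stays away from the three binary collision shapes, where $V\to\infty$. Let $U$ and $S_0$ be the neighborhood and time furnished by Proposition~\ref{prop_binarysyzygy}; since size is identically zero on $\{r=0\}$, the hypothesis $r\le c_2$ of that proposition holds automatically, so if the shape ever remains in $U$ for a full rescaled-time interval of length $S_0$ then $z=0$ occurs, contradicting our assumption. In particular the shape cannot converge to a binary collision point.

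The routine case is when the orbit remains in a compact region $K\subset\{r=0,\,z\ge0\}$ bounded away from binary collision. There $v$ is bounded and nondecreasing, hence converges to some $v_\infty$, and by continuity $v\equiv v_\infty$ on the $\w$-limit set $\W$, which is nonempty, compact, connected and invariant. Then $v'\equiv0$ on $\W$, forcing $x'=y'=0$ there; along an invariant orbit in $\W$ this makes $(x,y)$ constant, whence (\ref{eq_blowuprxy}) forces $V_x=V_y=0$, so the point is a central configuration, i.e.\ a restpoint. Since by Proposition~\ref{prop_linearization} the restpoints are hyperbolic, hence isolated, connectedness of $\W$ pins it to a single restpoint, to which the orbit converges.

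The main obstacle is to reduce the general case to this compact one, i.e.\ to rule out orbits that make recurrent excursions toward binary collision without ever being trapped in $U$ for time $S_0$. Here I would combine three facts. First, $v$ must stay bounded: if $v$ exceeded the threshold $\sqrt{2V_{\max}}$, with $V_{\max}$ the supremum of $V$ off $U$, then $v^2\le2V$ would confine the shape to $U$ for all later times, triggering Proposition~\ref{prop_binarysyzygy}; boundedness and monotonicity then give $\int_0^\infty \k(x'^2+y'^2)\,ds=2(v_\infty-v(0))<\infty$. Second, any excursion of the shape into $U$ has duration less than $S_0$, again by Proposition~\ref{prop_binarysyzygy}. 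Third, the shape-sphere metric $\k(dx^2+dy^2)$ is smooth (including at the binary collisions), so travelling from $\partial U$ to near a collision costs a definite shape-arclength $\ell_0>0$. But by Cauchy--Schwarz an excursion of duration less than $S_0$ has arclength at most $\sqrt{S_0}\big(\int_{\text{exc}}\k(x'^2+y'^2)\,ds\big)^{1/2}$, and the tail of the convergent integral tends to zero; so after finite time no excursion can penetrate $U$, the shape stays in a compact region, and the Lyapunov argument above applies. I expect the careful verification of this excursion estimate---quantifying ``penetrating $U$'' against the uniform lower bound $\a_2'\ge k_U>0$ extracted from the proof of Proposition~\ref{prop_binarysyzygy}---to be the delicate technical point.
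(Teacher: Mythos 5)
Your proposal is correct, but its logical organization differs from the paper's, whose entire proof is four sentences. Both arguments rest on the same two pillars: the Sundman/Lyapunov property $v'=\smfr12\k(x'^2+y'^2)\ge 0$ on $\{r=0\}$, and Proposition~\ref{prop_binarysyzygy} applied with $r\equiv 0$. The paper runs the implication in the opposite direction from yours: from gradient-likeness it asserts that any orbit not in the stable manifold of a restpoint has $v(s)\rightarrow\infty$; the energy relation then forces $V\rightarrow\infty$, so the shape enters and remains in a neighborhood of binary collision, and Proposition~\ref{prop_binarysyzygy} produces the syzygy. You instead assume no syzygy and prove confinement: $v$ is bounded (else the shape would be trapped in $U$, giving a syzygy), hence $\int_0^\infty\k(x'^2+y'^2)\,ds<\infty$, hence by your Cauchy--Schwarz excursion estimate the orbit is eventually confined to a compact region away from binary collision, where the standard $\w$-limit argument, using hyperbolicity (hence isolation) of the restpoints, gives convergence to one of them. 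What the paper's route buys is brevity, at the price of citing the dichotomy ``converge to a restpoint or $v\to\infty$'' as an immediate consequence of gradient-likeness; on the \emph{non-compact} collision manifold that dichotomy is not automatic -- one must rule out orbits with bounded $v$ making repeated fast excursions toward binary collision, which is precisely what your arclength estimate excludes (the paper is implicitly deferring here to the known structure of the collision-manifold flow in \cite{Moe}). So your longer argument is self-contained where the paper's is not; conversely, you could compress it by observing that your boundedness-plus-excursion step is exactly a proof of the dichotomy the paper asserts, after which the two proofs coincide.
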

\begin{proof}
Recall from section~\ref{sec_collisionflow} that
the flow on the triple collision manifold is gradient-like with respect to the variable $v$, i.e., $v(s)$ is strictly increasing except at the restpoints.  It follows that every solution which is not in the stable manifold of  one of the restpoints satifies $v(s)\rightarrow\infty$.  In this case, the energy equation (\ref{eq_energystereo}) shows that
the shape potential $V(x(s),y(s))\rightarrow\infty$ so the shape must approach one of the binary collision shapes.  For such orbits, proposition~\ref{prop_binarysyzygy} gives a syzygy in a bounded rescaled time and the proposition follows.
\end{proof}

We will also need a result analogous to proposition~\ref{prop_bounded}.  Consider an orbit in $\{r=0\}$ with initial conditions satifying $0<z(0)\le c_1<1$ and $z'(0)\le 0$.   It will be shown that, under certain assumptions on the masses, every such orbit has $z(s_0)=0$ at some time $s_0>0$.

On any time interval $(0,s_0]$ such that $z(s)>0$ one has $z'(s)<0$ and hence $z(s)$ is monotonically decreasing.  This follows from the ``convexity'' condition that $z''<0$ whenever $0<z<1$ and $z'=0$ which is easily verified from  (\ref{eq_zp2}).
If the orbit does not reach syzygy in forward time, then $0<z(s)\le c_1$ for all $s\ge 0$.  It is certainly not in the stable manifold of one of the Lagrangian restpoints at $z=1$ so, by proposition~\ref{prop_collisionsyzygy}, it must be in the stable manifold of one of the Eulerian (collinear) restpoints at $z=0$. 

It will now be shown that, for most choices of the masses, even orbits in these stable manifolds have syzygies.   Let $e_j = (x_j,y_j), j=1,2,3$ denote the collinear central configuration with mass $m_j$ between the other two masses on the line.  The two corresponding restpoints on the collision manifold are $E_{j-}$ and $E_{j+}$ with coordinates
$(r,x,y,v,x',y') = (0,x_j,y_j,\pm\sqrt{2V(x_j,y_j)},0,0)$.  The two-dimensional manifolds $W^s(E_{j-})$ and $W^u(E_{j+})$ are contained in the collinear invariant submanifold.  In particular, an orbit with $z(0)>1$ cannot converge to $E_{j-}$ in forward time.  On the other hand, $W^s(E_{j+})$ is three-dimensional and its intersection with the collision manifold is two-dimensional.  These are the orbits which might not reach syzygy.

{\bf Remark.}
Conjecture  \ref{conj1} asserts that all  orbits reach syzygy with $v<0$.  So, if the
conjecture is valid  then convergence to $E_{j+}$ would be impossible and the discussion to follow, and   the restriction on the masses in the theorem, would be unnecessary.

For most choices of the masses, the two stable eigenvalues of $E_{j+}$  with eigenspaces tangent to the collision manifold are non-real. See \cite{Moe}.   In this case we will say that $E_{j+}$ is {\em spiraling}.  The spiraling case is more common:  real eigenvalues occur only when the mass $m_j$ is much larger than the other two masses.  For all masses, at least two of the three Eulerian restpoints $E_{j+}$ are spiraling and for a large open set of masses where no one mass dominates, all of the Eulerian restpoints are spiraling.

\begin{proposition}\label{prop_spiralsyzygy}
Let $E_{j+}$ be a spiraling Eulerian restpoint.  Then there is a neighborhood $U$ of $E_{j+}$, and a time $S_0(U)$ such that any non-collinear orbit with initial condition in the local stable manifold $W^s_U(E_{j+})$ has a syzygy in every time interval of length at least $S_0$.
\end{proposition}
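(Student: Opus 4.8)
The plan is to exploit the spiraling structure of the linearization at $E_{j+}$ to force the angle variable $\a_2$ in the $(z,p_2)$-plane to advance at a definite rate, thereby guaranteeing that $z$ returns to zero. First I would recall from Proposition \ref{prop_linearization} that $E_{j+}$ has a three-dimensional stable manifold whose intersection with the collision manifold is two-dimensional, and that in the spiraling case the two stable eigenvalues tangent to the collision manifold are a complex-conjugate pair. An orbit in the local stable manifold converges to $E_{j+}$, and because $E_{j+}$ lies at $z=0$ on the Eulerian (collinear) circle, the shape $(x(s),y(s))$ tends to the collinear configuration $e_j$ and $z(s)\to 0$. The key point is that the complex (spiraling) eigenvalues mean the approach is oscillatory in the directions transverse to the collinear submanifold, so $z$ cannot tend to $0$ monotonically from one side; instead it must oscillate and cross $z=0$ infinitely often as $s\to\infty$.

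The main steps I would carry out are as follows. I would set up coordinates near $E_{j+}$ adapted to the invariant splitting, isolating the two-dimensional spiraling eigenspace within the collision manifold. Since $E_{j+}$ is a collinear restpoint, the variable $z$ vanishes there, and the linearized dynamics of $z$ (together with its conjugate momentum $p_2$) in the transverse-to-collinear directions is governed precisely by the spiraling pair. I would linearize equation (\ref{eq_zp2}) along the orbit: as $s\to\infty$ the coefficients $F_2$, $\k$, and $v$ converge to their values at $E_{j+}$, so the $(z,p_2)$ system limits to a constant-coefficient linear system whose eigenvalues inherit the nonzero imaginary part of the spiraling eigenvalues. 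From (\ref{eq_quadform}), the angular velocity $\a_2'$ then has a strictly positive lower bound once the orbit is close enough to $E_{j+}$, because the quadratic form $F_2 z^2 + \tfrac{2}{\k}p_2^2 + \tfrac12 v z p_2$ stays positive-definite precisely in the regime where the eigenvalues are complex. This forces $\a_2$ to advance at a uniform positive rate, so $z$ changes sign in any sufficiently long time interval, giving the claimed $S_0(U)$.

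The subtle point, and the one I expect to be the main obstacle, is the interplay between the cross term $\tfrac12 v z p_2$ in (\ref{eq_quadform}) and the sign of $v$ at $E_{j+}$: here $v = v_+ = +\sqrt{2V(x_j,y_j)} > 0$, so the cross term is genuinely present and could in principle destroy positivity of the quadratic form, which is exactly what the earlier discussion warned might prevent syzygy. The whole argument hinges on showing that \emph{spiraling} is equivalent to the relevant quadratic form remaining definite (equivalently, to the determinant-over-trace bound $\a_2' \ge 2\delta/\tau > 0$ of Proposition \ref{prop_binarysyzygy} surviving near $E_{j+}$), so I would need to connect the complex-eigenvalue condition from \cite{Moe} directly to positivity of $\delta = 2\k^{-1}F_2 - \tfrac1{16}v^2$. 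Thus the crux is the linear-algebra computation identifying ``spiraling'' with ``$\delta>0$ near $E_{j+}$''; once that is established, the monotone advance of $\a_2$ and the existence of $S_0$ follow by compactness of the local stable manifold and continuity.
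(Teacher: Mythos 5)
Your proposal is correct and follows essentially the same route as the paper's proof: use invariance of the collinear manifold to decouple the $(z,p_2)$ dynamics near $E_{j+}$, identify the eigenvalues of the frozen/linearized block with the spiraling pair, and convert non-reality of those eigenvalues into a uniform positive lower bound on $\a_2'$ in a neighborhood $U$, which forces a syzygy in every time interval of a fixed length $S_0$. The equivalence you flag as the crux does hold and is a one-line check — the characteristic polynomial of the frozen matrix is $\lambda^2 + \tfrac{v}{2}\lambda + 2F_2/\k$, whose discriminant equals $-4\left(2\k^{-1}F_2 - \tfrac{1}{16}v^2\right)$, so non-real eigenvalues are exactly positive definiteness of the quadratic form in (\ref{eq_quadform}) — and this is precisely what the paper uses implicitly when it asserts $\a_2'>k>0$ for the linearized system.
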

\begin{proof}
Introduce local coordinates in the energy manifold near $E_{j+}$ of the form $(r,a,b,z,p_2)$ where $(a,b)$ are local coodinates in the collinear collision manifold (the intersection of the invariant collinear manifold with $\{r=0\}$) and $(z,p_2)$ are the variables of (\ref{eq_zp2}).   The invariance of the collinear manifold $z=p_2=0$ implies that the linearized differential equations for $(z,p_2)$ take the form
$$\m{z\\ p_2}' = \m{\a&\b \\ \g&\d}\m{z\\ p_2}$$
where the eigenvalues of the matrix are the non-real eigenvalues at the restpoint.   The spiraling assumption implies that the angle $\a_2$ in the $(z,p_2)$-plane satisifies $\a_2'>k>0$ for some constant $k$.  So for the full nonlinear equations, one has
$\a_2'>k/2>0$ in some neighborhood $U$.  Since non-collinear orbits in the local stable manifold remain in $U$ and have nonzero projections to the $(z,p_2)$-plane, the proposition holds with $S_0=2\pi/k$.
\end{proof}

A mass vector $(m_1,m_2,m_3)$ will said to satisfy the {\em spiraling assumption}  if all of the Eulerian restpoints are spiraling.   The next result follows from propositions~\ref{prop_collisionsyzygy} and \ref{prop_spiralsyzygy}.

\begin{proposition}\label{prop_monotonecollisionsyzygy}
Consider an orbit in $\{r=0\}$ with initial conditions satifying $0<z(0)\le c_1<1$ and $z'(0)\le 0$.  Suppose that the masses satisfy the spiraling assumption.   Then there is a (rescaled) timetime $s_0>0$ such that $z(s_0)=0$.  Moreover $z(s)$ is monotonically decreasing on $[0, s_0]$
\end{proposition}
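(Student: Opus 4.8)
The plan is to argue by contradiction, in close analogy with the proof of Proposition~\ref{prop_bounded}, but with the collision-manifold dichotomy of Proposition~\ref{prop_collisionsyzygy} and the spiraling hypothesis replacing the role played there by Proposition~\ref{prop_unbounded}. First I would establish the monotonicity of $z$, which also yields the final clause of the statement.

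For the monotonicity, I would read everything off system~(\ref{eq_zp2}). Since $z'=\frac{2}{\k}p_2$, the vanishing $z'=0$ is the same as $p_2=0$, and at such a point the $\k'$-term in $z''$ drops out, leaving $z''=\frac{2}{\k}p_2'=-\frac{2F_2 z}{\k}$. When $0<z<1$ the shape $(x,y)$ is not the Lagrange point, so $\phi(x,y)>0$ by Proposition~\ref{prop_Vderiv} and hence $F_2\ge\phi(x,y)>0$; thus $z''<0$ whenever $z'=0$ and $0<z<1$. This convexity rules out an interior minimum of $z$ in the strip $0<z<1$, so combined with $z'(0)\le 0$ it forces $z'(s)<0$, hence $z$ strictly decreasing, on every interval where $0<z(s)\le c_1$.

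For existence, suppose toward a contradiction that $z(s)>0$ for all $s\ge 0$. By the previous step $z$ decreases monotonically and stays in $(0,c_1]$, so it is bounded away from the Lagrange values $z=1$ and $z=\infty$. Proposition~\ref{prop_collisionsyzygy} then forces the orbit to converge asymptotically to a restpoint, necessarily an Eulerian one; and it cannot be $E_{j-}$, whose stable manifold lies in the collinear submanifold $\{z=p_2=0\}$, because the orbit is non-collinear. Hence the orbit converges to some $E_{j+}$, which is spiraling by hypothesis. Once the orbit has entered the neighborhood $U$ of Proposition~\ref{prop_spiralsyzygy} it lies on $W^s_U(E_{j+})$ and, being non-collinear, undergoes a syzygy within rescaled time $S_0$, contradicting $z>0$. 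Therefore some $s_0>0$ with $z(s_0)=0$ exists, and monotonicity on $[0,s_0]$ is the content of the first step.

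The crux is the Eulerian case: a monotone decrease of $z$ to $0$ is a priori consistent with mere asymptotic approach to the equator $\{z=0\}$, so the convexity argument alone cannot produce an actual syzygy, and this is exactly the gap that the spiraling assumption fills. The way I would cash it in is to translate time so that the orbit's entry into $W^s_U(E_{j+})$ becomes the initial instant and then quote Proposition~\ref{prop_spiralsyzygy} directly; the underlying mechanism is that spiraling makes the $(z,p_2)$-angle increase monotonically, which cannot coexist with $z$ staying positive.
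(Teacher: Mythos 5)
Your proposal is correct and follows essentially the same route as the paper: the convexity observation $z''<0$ when $z'=0$ and $0<z<1$ (from (\ref{eq_zp2}) and Proposition~\ref{prop_Vderiv}) gives monotonicity, and then the dichotomy of Proposition~\ref{prop_collisionsyzygy} combined with the spiraling hypothesis via Proposition~\ref{prop_spiralsyzygy} rules out the only remaining escape, namely asymptotic convergence to an Eulerian restpoint $E_{j+}$. The paper's own justification is exactly this, stated as ``the next result follows from propositions~\ref{prop_collisionsyzygy} and \ref{prop_spiralsyzygy}'' together with the preceding discussion excluding $E_{j-}$ and the Lagrange restpoints.
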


\subsection{Continuity of the Syzygy Map}\label{SecContinuity}
In this section, we will prove the statement in Theorem~\ref{thm:cty} about continuity  of the syzygy map and its extension to the Lagrange homothetic orbit.  

We begin by viewing the syzygy map in blow-up coordinates  $(r,x,y,v,x',y')$.  Recall the notations $P_h$ for the energy manifold and $Q_h$ for the Hill's region.  Points on the boundary surface $\partial Q_h$ of the Hill's region can be uniquely lifted to zero-velocity (brake) initial conditions in $P_h$.  Let  $\partial Q_h ^+$ be the subset of the boundary with shapes in the open unit disk (which corresponds to the open upper hemisphere in the shape sphere model).  This is the upper boundary surface in figure~\ref{fig_Hillstereo}.  It will be the domain of the syzygy map.

To describe the range, let $\tilde C_h =\{(r,x,y,v,x',y'): r > 0, x^2+y^2=1,\;(\ref{eq_energystereo})\,holds\}$ be the subset of the energy manifold whose shapes are collinear and let $C_h \subset Q_h$ be its projection to the Hill's region, i.e., the set
of  syzygy configurations having allowable energies.  $C_h$  is the cylindrical surface over the unit circle in figure~\ref{fig_Hillstereo} (the unit circle in $\{r=0\}$ is the blow-up of the puncture at the origin).  The first version of the syzygy map will be a map from part of  $ \partial Q_h ^+$ to $C_h$.

Recall that the origin $(x,y)=(0,0)$ represents the equilateral shape.  The corresponding point $p_0\in \partial Q_h ^+$ lifts to a brake initial condition in $P_h$ which is on the Lagrange homothetic orbit.   This orbit converges to the Lagrange restpoint without reaching syzygy.  It turns out that $p_0$ is the only point of $\partial Q_h ^+$ which does not reach syzygy and this leads to the first version of the syzygy map.

\begin{proposition}\label{prop_puncturedsyzygymap}
Every point of $\partial Q_h ^+ \setminus p_0$ determines a brake orbit which has a syzygy in forward time.  The map $F:\partial Q_h ^+ \setminus p_0 \into C_h$ determined by following these orbits to their first intersections with  $\tilde C_h$ and then projecting to $C_h$ is continuous.
\end{proposition}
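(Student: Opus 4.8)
The plan is to split the argument into two parts, exactly matching the two assertions: first, that every brake orbit other than the Lagrange homothetic one reaches syzygy (so that $F$ is well-defined on $\partial Q_h^+\setminus p_0$), and second, that the resulting map is continuous. For the first part, I would simply invoke the syzygy-existence results already proved. A point $p\in\partial Q_h^+\setminus p_0$ lifts to a brake initial condition with $r(0)>0$ and shape in the open unit disk, so $0<z(0)\le 1$, and if $p\ne p_0$ then the shape is not equilateral, so either $0<z(0)<1$ or $z(0)=1$ with nonzero shape velocity. In either case Proposition~\ref{prop_forwardsyzygy} applies: the orbit cannot be the Lagrange homothetic orbit, and the alternative ``ends in Lagrangian triple collision with $z\to 1$'' is excluded because a brake orbit starting with $z(0)<1$ (or with $z(0)=1$ but moving) cannot monotonically approach $z=1$ — this is precisely the content of the dichotomy in Proposition~\ref{prop_forwardsyzygy}. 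Hence there is a first time $t_0>0$ with $z(t_0)=0$, giving a first syzygy.

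For continuity, the key technical point is \emph{transversality}. At the first-syzygy time $t_0$, I claim $z$ crosses zero transversally, i.e.\ $\dot z(t_0)\ne 0$. Suppose not; then $z(t_0)=0$ and $p_1(t_0)=0$ in the notation of~(\ref{eq_zp1}). But $\{z=p_1=0\}$ is exactly the invariant collinear submanifold, so the whole orbit would be collinear, contradicting the fact that its initial shape lies in the \emph{open} unit disk (off the collinear circle). Therefore $p_1(t_0)\ne 0$, so $\dot z(t_0)\ne 0$ and the orbit meets the syzygy surface $\tilde C_h=\{x^2+y^2=1\}$ transversally. Granting this, continuity follows from standard ODE theory: the flow of the smooth vector field~(\ref{eq_blowuprxy}) depends continuously (indeed smoothly) on initial conditions, and the first-hit time $t_0$ of a transversal hypersurface is a continuous function of the initial condition by the implicit function theorem applied to $z(t,p)=0$. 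Composing the flow with this continuous first-hit time, and then projecting to $C_h$, gives a continuous map $F$.

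The main obstacle — and the step requiring genuine care rather than a black-box citation — is ensuring that the \emph{first} hit time varies continuously, because a priori the first-syzygy time could jump if a nearby orbit acquires an earlier transversal crossing or if $t_0$ fails to be locally bounded. I would handle this as follows. Transversality at $t_0$ gives, by the implicit function theorem, a continuous local first-hit time on a neighborhood of $p$ among orbits that have not yet hit $z=0$ before a time slightly less than $t_0$; the content to verify is that no nearby orbit reaches syzygy strictly before $t_0-\delta$. This is guaranteed because on the compact time interval $[0,t_0-\delta]$ the reference orbit stays in the region $z>0$ bounded away from $0$, and by continuous dependence nearby orbits do too, for initial conditions sufficiently close to $p$. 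Thus the first-hit time for the perturbed orbit is within $\delta$ of $t_0$, establishing local continuity of $t_0(\cdot)$ and hence of $F$.

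One remaining subtlety is that some syzygies are binary collisions, where $V\to\infty$ and the vector field~(\ref{eq_blowuprxy}) is singular. Near such points I would appeal to the regularized variables of Proposition~\ref{prop_unbounded} (or standard Levi-Civita regularization of double collisions), in which the flow extends smoothly across the collision; the syzygy condition ``$u=z^2$ real'' is again a codimension-one transversal condition there, so the same implicit-function-theorem argument yields continuity. Away from the binary collision shapes, the unregularized equations suffice. Patching these two cases gives continuity of $F$ on all of $\partial Q_h^+\setminus p_0$.
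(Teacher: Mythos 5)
Your overall architecture matches the paper's: existence of a first syzygy via Proposition~\ref{prop_forwardsyzygy}, continuity via transversality of the crossing plus a first-hit-time argument, and Levi-Civita regularization to handle syzygies at binary collision. Your continuity half is sound; indeed your backward-uniqueness argument (if $\dot z(t_0)=0$ then $(z,p_1)(t_0)=(0,0)$ lies in the invariant collinear manifold, forcing the whole orbit to be collinear) and your explicit control of the first-hit time are somewhat more detailed than the paper's one-line appeal to transversality, and your treatment of the binary-collision case is at the same level of rigor as the paper's.

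However, there is a genuine gap in the existence half, at exactly the point where the brake (zero-velocity) hypothesis must do the work. To apply Proposition~\ref{prop_forwardsyzygy} you must rule out its first alternative: that the orbit ends in Lagrangian triple collision with $z(t)$ increasing monotonically to $1$. You assert that ``a brake orbit starting with $z(0)<1$ cannot monotonically approach $z=1$'' and justify this by saying it ``is precisely the content of the dichotomy'' --- but that is circular. The dichotomy does not exclude this behavior; it lists it as one of the two possible outcomes, and it genuinely occurs for non-brake initial conditions with $0<z(0)<1$: every orbit in the stable manifold of the Lagrange restpoint $L_-$ (which by Proposition~\ref{prop_linearization} is three-dimensional and transverse to the collision manifold, hence contains orbits with $r>0$ and $z(0)<1$) does exactly this. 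So the zero-velocity condition is indispensable, and your argument never actually invokes it. The missing step, which is the paper's argument, is short: a brake initial condition has all velocities zero, hence $\dot z(0)=0$, i.e.\ $p_1(0)=0$ in the notation of (\ref{eq_zp1}); since $\dot p_1=-F_1 z$ with $F_1\ge 0$, one gets $p_1(t)\le 0$ and hence $\dot z(t)\le 0$ for as long as $z(t)\ge 0$ (this is the first paragraph of the proof of Proposition~\ref{prop_bounded}). Thus $z$ is non-increasing, cannot tend monotonically to $1$, and the syzygy alternative of Proposition~\ref{prop_forwardsyzygy} must hold. (A minor related slip: your case ``$z(0)=1$ with nonzero shape velocity'' is vacuous here --- a brake point with equilateral shape \emph{is} $p_0$, and brake initial conditions have zero velocity by definition.) With that paragraph inserted, your proof is correct and essentially coincides with the paper's.
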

\begin{proof}
The surface  $\partial Q_h$ intersects the line $x=y=0$ transversely at $p_0$.  So every point of $\partial Q_h ^+ \setminus p_0$ satisfies $0<z<1$ where $z = 1-x^2-y^2$ as before.   Also $r>0$ on the whole surface $\partial Q_h$.

Consider the brake initial condition corresponding to such a point.  Since all  the velocities vanish one has $\dot z(0) = 0$.  As in the first paragraph of the proof of proposition~\ref{prop_bounded}, one finds that $z(t)$ is decreasing as long as $z(t)\ge 0$.  In particular, $z(t)$ does not monotonically increase toward $1$.  It follows from proposition~\ref{prop_forwardsyzygy} that there is a time $t_0>0$ such that $z(t_0)=0$.  In other words, the forward orbit reaches  $\tilde C_h$.  

To see that the flow-defined map $\tilde F:\partial Q_h ^+ \setminus p_0 \into \tilde C_h$ is continuous, note that if the first syzygy is not a binary collision then $\dot z(t_0)<0$.  This follows since $z(t)$ is decreasing and since the orbit does not lie in the invariant collinear manifold with $z=\dot z = 0$.  So the orbit meets $\tilde C_h$ transversely.  After regularization of double collisions, even orbits whose first syzygy occurs at binary collision can be seen as meeting $\tilde C_h$ transversely as in the proof of proposition~\ref{prop_unbounded}.  It follows from transversality that $\tilde F$ is continuous.  Composing with the projection to the Hill's region shows that $F$ is also continuous.
\end{proof}

To get a continuous extension of $F$ to $p_0$, we have to collapse the triple collision manifold back to a point.  One way to do this is to replace the coordinates $(r,x,y)$ with $(X,Y,Z) = r(s_1,s_2,s_3)$ where $s_i$ are given by inverse stereographic projection.  The Hill's region is shown in figure~\ref{fig_Hillspherical}.  In this figure, $\partial Q_h ^+$ is the open upper half of the  boundary surface and $C_h$ is the planar surface inside (minus the origin).  In this model, triple collision has been collapsed to the origin $(X,Y,Z)=(0,0,0)$.   It is natural to extend the syzygy map by mapping the Lagrange homothetic point $p_0$ to the triple collision point, i.e., by setting $F(p_0)= (0,0,0)$.    The extension  maps into $\bar C_h = C_h\union 0$.

\begin{theorem}
If the masses satisfy the spiraling assumption then the extended syzygy map $F:\partial Q_h ^+ \into \bar C_h$ is continuous.   Moreover, it has topological degree one near $p_0$.
\end{theorem}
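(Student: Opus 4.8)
The plan is to treat the two assertions separately, since continuity on $\partial Q_h^+ \setminus p_0$ is already in hand from Proposition~\ref{prop_puncturedsyzygymap}; only continuity \emph{at} $p_0$ and the local degree remain. In the coordinates $(X,Y,Z)=r(s_1,s_2,s_3)$ the target value $F(p_0)=(0,0,0)$ is reached precisely when the size $r$ at first syzygy tends to $0$, because $(s_1,s_2,s_3)$ lies on the unit sphere. Thus the continuity statement reduces to a single estimate: as a brake initial condition $p\in\partial Q_h^+$ approaches $p_0$, the value of $r$ at the first syzygy of its orbit tends to $0$.

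To establish this estimate I would exploit the hyperbolic structure at the Lagrange restpoint $L_-$ described in Proposition~\ref{prop_linearization}. The surface $\partial Q_h^+$ meets the three-dimensional stable manifold $W^s(L_-)$ transversally at the isolated point $p_0$ (the Lagrange homothetic brake orbit), so a nearby $p$ lies just off $W^s(L_-)$. Its forward orbit shadows $W^s(L_-)$ inward toward $L_-$---during which $r'=vr$ with $v\approx v_-<0$ forces $r$ to contract exponentially---then, being off the stable manifold, is ejected along the two-dimensional unstable manifold $W^u(L_-)\subset\{r=0\}$ by the inclination ($\lambda$-) lemma. The limiting ejected orbit lies on $W^u(L_-)$ and, by Propositions~\ref{prop_collisionsyzygy} and~\ref{prop_monotonecollisionsyzygy} (this is where the spiraling assumption enters, ruling out capture by an Eulerian restpoint $E_{j+}$), reaches syzygy transversally in some finite rescaled time $s_*$. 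By continuous dependence the nearby orbit reaches syzygy at rescaled time close to $s_*$; since it departed the neighborhood of $L_-$ with $r$ already small and $s_*$ is bounded, $r$ changes only by a bounded factor before syzygy, so $r(\text{syzygy})\to0$ as $p\to p_0$. This gives $F(p)\to F(p_0)$.

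For the degree I would compute the winding number of $F$ restricted to a small circle $\gamma$ about $p_0$, parametrized by the direction $\theta$ of the initial shape deviation from the equilateral point. The map factors, up to orientation-preserving homeomorphism of circles, as: initial direction $\theta\mapsto$ asymptotic ejection direction along $W^u(L_-)\mapsto$ first-syzygy point in $\{z=0\}\cap\{r=0\}\mapsto F(p)$. The first arrow is controlled by the linearization at $L_-$: the stable coordinates contract to negligible size, so the exit direction is the normalized unstable component of the initial deviation; the projection of $T_{p_0}\partial Q_h^+$ onto the unstable eigenspace along the stable eigenspace is a linear isomorphism (transversality of $\partial Q_h^+$ and $W^s(L_-)$ at $p_0$), while the unstable eigenspace in turn maps linearly \emph{onto} the shape-sphere tangent plane by Remark~2 after Proposition~\ref{prop_linearization}; both being isomorphisms of two-dimensional spaces, the induced map of direction circles has degree one. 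The second arrow is a homeomorphism onto its image loop because the limiting $W^u$-orbits meet the syzygy set transversally ($z'<0$ there), and the third is the small-$r$ lift, continuous by the first part. Composing three degree-one circle maps yields winding number one, i.e. local degree one.

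The main obstacle is the degree bookkeeping through the saddle passage at $L_-$: the $\lambda$-lemma delivers only $C^1$-closeness of the ejected image to $W^u_{loc}(L_-)$, and I must promote this to a genuine, continuously varying, degree-one correspondence between the angular variable $\theta$ and the ejection direction. The clean route is to introduce linearizing coordinates near $L_-$, write the local passage map from a cross-section transverse to $W^s$ to one transverse to $W^u$, and verify that its angular part is a homeomorphism of the relevant circles with no spurious winding---exactly what the onto/isomorphism property of the unstable projection guarantees. Checking orientations consistently, so that the degree is $+1$ rather than merely $\pm1$, is the remaining point requiring care.
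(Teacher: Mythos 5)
Your continuity argument is, in substance, the paper's own: reduce continuity at $p_0$ to showing that $r$ at first syzygy tends to $0$, follow nearby brake orbits along the homothetic orbit into a neighborhood of $L_-$, eject them near $W^{u+}(L_-)\subset\{r=0\}$, and use the fact that under the spiraling assumption (Propositions~\ref{prop_collisionsyzygy} and~\ref{prop_monotonecollisionsyzygy}) the orbits of $W^{u+}(L_-)$ reach syzygy transversally, so that the first-syzygy map is defined and continuous near them and keeps $r$ small. That half is sound. The degree computation, however, has genuine gaps. For your middle factor, note that injectivity of the first-syzygy map into the \emph{phase-space} syzygy set $\tilde C_h\cap\{r=0\}$ is automatic (distinct orbits cannot meet), but that is not what you need: the winding number lives in the projection to the circle of collinear \emph{shapes}, and an embedded loop in the three-dimensional set $\tilde C_h\cap\{r=0\}$ can project to a shape-circle loop of any winding number, including zero. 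So ``homeomorphism onto its image loop'' — which in any case does not follow from transversality once you project — does not give degree $\pm1$, let alone $+1$. Your first factor is also more than deferred bookkeeping: an ``asymptotic ejection direction'' is generally not a well-defined degree-one circle map, since complex unstable eigenvalues at $L_-$ make ejected orbits spiral with no limiting direction, while real eigenvalues of distinct moduli make almost all ejected orbits align with the strong unstable direction, so the map degenerates; you would be forced to exit maps through fixed cross-sections and a genuine analysis of the nonlinear passage map, which is exactly what you postpone. Finally, the transversality of $\partial Q_h^+$ with $W^s(L_-)$ at $p_0$, on which your linear-algebra step leans, is nowhere established (and the paper never needs it).

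The single idea you are missing, and which makes all saddle-passage bookkeeping unnecessary for the degree, is the monotone decrease of $z=1-x^2-y^2$. Along the orbit of every point of a small disk $D\setminus p_0\subset\partial Q_h^+$, $z$ decreases monotonically from $z(0)<1$ to $0$ at first syzygy (this is how Proposition~\ref{prop_puncturedsyzygymap} is proved), so the shape projection of each such orbit never returns to the origin of the $(x,y)$-disk. Hence running the flow up to the fraction $\tau\in[0,1]$ of each point's first-syzygy time is a homotopy, entirely inside the punctured disk $\{0<x^2+y^2\le 1\}$, from the shape projection of $\partial D$ — a loop of winding number one about the origin, since $\partial Q_h^+$ is a graph over the shape disk — to the image loop of the syzygy map. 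Composing with the radial retraction onto the unit circle, homotopy invariance of the winding number gives local degree one, sign included, with no linearization, no $\lambda$-lemma refinement, and no injectivity claim. This is precisely how the paper computes the degree: the hyperbolic analysis near $L_-$ enters only in the proof of continuity at $p_0$, never in the degree count.
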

\begin{proof}
To prove continuity at $p_0$ it suffices to show that points in $\partial Q_h ^+$ near $p_0$ have their first syzygies near $r=0$.   The proof will use the blown-up coordinates $(r,x,y)$ and the rescaled time variable $s$.

Let $L_-$ denote the Lagrange restpoint on the collision manifold to which the orbit of $p_0$ converges.  Let $W^{s+}(L_-), W^{u+}(L_-)$ be the local stable and unstable manifolds of $L_-$ where {\em local} means that the orbits converge to $L_-$ while remaining in $\{z>0\}$.  It follows from proposition~\ref{prop_monotonecollisionsyzygy} that 
$z(s)$ is monotonically increasing to $1$ along orbits in $W^{s+}(L_-)\intersection\{r=0\}$.  A similar argument with time reversed shows that $z(s)$ monotonically decreases from $1$ along orbits in  $W^{u+}(L_-)$. These last orbits
lie   entirely in $\{r=0\}$) by proposition \ref{prop_linearization}.

The unstable manifold $W^{u+}(L_-)$ is two-dimensional and its  projection to the $(x,y)$ plane is a local diffeomorphism near $L_-$.  Let $D^u$ be a small disk around $L_-$ in $W^{u+}(L_-)$.  It follows from proposition~\ref{prop_monotonecollisionsyzygy}  that all of the points in $D^u\setminus L_-$ can be followed forward to meet $\tilde C_h$ transversely.  Moreover the monotonic decrease of $z(s)$ implies that if this flow-induced mapping is composed with the projection to $C_h$ and then to the unit circle, the resulting map from the punctured disk to the circle will have degree one.

Let $\g^u = \partial D^u$ be the boundary curve of such an unstable disk.  Since the unstable manifold is contained in the collision manifold, the first-syzygy map takes $\g^u$ into $\tilde C_h\intersection \{r=0\}$.  Transversality implies that the first syzygy map is defined and continuous near $\g^u$.  Hence, given any $\e>0$  there is a neighborhood $U$ of $\g_u$ such that initial conditions in $U$ have their first syzgies with $r<\e$.  Standard analysis of the hyperbolic restpoint $L_-$ then shows that any orbit passing sufficiently close to $L_-$ will exit a neighborhood of $L_-$ through the neigborhood $U$ of $\g^u$ and therefore reaches its first syzygy with $r<\e$. We have established the continuity of the syzygy map at $p_0$. 

Now consider a small disk, $D$, around $p_0$ in $\partial Q_h ^+$.    It has already been shown that every point in $D\setminus p_0$ 
can be followed forward under the flow to meet $\tilde C_h$ transversely with $z(s)$ decreasing monotonically.   
If $D$ is sufficiently small it will follow the Lagrange homothetic orbit close enough to $L_-$ for the argument of the previous 
paragraph to apply.  In other words the syzygy map takes $D\setminus p_0$ into $\{r<\e \}$ which implies continuity of the extension 
at $p_0$.   As before, the monotonic decrease of $z(s)$ implies that the map from the punctured disk to the unit circle has degree 
one. Hence the map with triple collision collapsed to the origin has local degree one.
\end{proof}

This completes the proof of the continuity statements in Theorem~\ref{thm:cty} and also shows that the extended map covers a 
neighborhood  of the triple collision point.  See figure~\ref{fig_syzygyimage} for a picture illustrating this theorem in the equal mass case.

\section{Variational methods: Existence and regularity of minimizers}
\label{SecVariational}
In this section, we  prove theorems \ref{thm:variational}, \ref{thm:equal_masses} and lemmas~\ref{lemma:JM_Marchal}
by applying the direct method of the Calculus of Variations to the  Jacobi-Maupertuis [JM] action:
\begin{equation}  \label{Jac-Maup}
A_{JM}(\gamma)=\int_{t_0}^{t_1} \sqrt{K_0} \sqrt{2(U-h)}\, dt.
\end{equation}
Here $\gamma : [t_0,t_1]\rightarrow Q_h$ is a curve.   Curves which minimize $A_{JM}$ within some
class of curves will be called ``JM minimizers''. 

The integrand of the functional  $A_{JM}$
is  homogeneous of degree one in velocities and so
is independent of  how $\gamma$ is parameterized. 
  The natural domain of definition of the functional is the space of  rectifiable curves in $Q_h$. 
We recall some notions about Fr\'echet rectifiable curves.  See \cite{Ewing} for more details.
Kinetic energy $K_0$  induces a complete \Ri metric, denoted $2K_0$ on the manifold
with boundary  $Q_h$.  We denote by $d_0$ the associated Riemannian distance. The Fr\'echet distance between two continuous curves 
$\gamma : [t_0,t_1]\rightarrow Q_h$ and $\gamma^\prime :[t_0^\prime,t_1^\prime]\rightarrow Q_h$ is defined to be 
$$
\rho(\gamma,\gamma^\prime)=\inf\limits_h \sup\limits_{t\in [t_0,t_1]} d_0(\gamma(t),\gamma^\prime(h(t))),
$$
where the infimum is taken over all orientation preserving homeomorphisms $h:[t_0,t_1]\rightarrow [t_0^\prime,t_1^\prime]$.  
We   consider two curves   equivalent if the Fr\'echet distance between them  is   zero. 
An equivalence class of such curves  can be seen as an unparametrized curve.
The  Fr\'echet distance is a complete metric on the set of 
unparametrized curves. 
The length ${\mathcal L}(\gamma)$ 
of a parametrized curve $\gamma : [t_0,t_1]\rightarrow Q_h$ can be defined to be the supremum of the lengths 
of broken geodesics associated to subdivisions of $[t_0,t_1]$. This length  equals the usual
$2K_0$ length when the curve is $C^1$.  Equivalent curves have 
equal lengths. A curve is called  rectifiable if its length is finite.   
 For a generic parametrization of a rectifiable curve, the integral (\ref{Jac-Maup})
must be  interpeted as a Weierstrass integral (see \cite{Ewing}).  This  integral  is independent
of parameterization.  

We will use the following two facts in  our proof of Theorem \ref{thm:variational}.
If $K\subset Q_h$ is compact, 
then the set of curves in $Q_h$ which intersect  $K$ and have length
bounded by a given constant forms a  compact set in the Fr\'echet topology. (This fact
is a theorem attributed to Hilbert.) The second fact asserts that
both the length functional ${\mathcal L}$ and the JM action functional $A_{JM}$ are   lower semicontinuous in the Fr\'echet topology.  (Again, see \cite{Ewing}).

Since  the Jacobi-Maupertuis action degenerates on the Hill boundary  
we will need to construct a tubular neighborhood of the Hill boundary within which we can characterize $JM$-minimizers to the Hill boundary. Our construction follows \cite{Seifert}.
\begin{proposition} \label{seifert-nbd}
There exists $\epsilon>0$, a neighborhood $S^{\epsilon}_h$ of $\partial Q_h$ (in $Q_h$), 
an analytic diffeomorphism $\Phi: \partial Q_h\times [0,\epsilon]\rightarrow S^{\epsilon}_h$ 
(satisfying $\Phi(\partial Q_h,{0})=\partial Q_h$)
and a strictly positive constant $M$, such that if $\delta\in (0,\epsilon)$,
and $x\in \partial Q_h$, the curve $y\mapsto \Phi(x,y),\ y\in [0,\delta]$ 
is a reparametrization of an arc of the brake solution 
starting from $x$, and its length is smaller or equal to $M\delta$. 
If $q=\Phi(x,\delta)$ then for every rectifiable curve $\gamma$ in $Q_h$ joining $q$ to $\partial Q_h$ 
we have 
$$
A_{JM}(\gamma)\ge  \delta^{3/2},
$$ 
with equality if and only if $\gamma$ is obtained by pasting $\left(\Phi(x,y)\right)_{y\in[0,\delta]}$ to
 any arc contained in $\partial Q_h$. 
Moreover, there exists a strictly positive constant $\alpha$, independent from $\delta$, such that $U\ge h+\alpha\delta$ 
on $Q_h\setminus S_h^{\delta}$, where we term $S_h^{\delta}=\Phi(\partial Q_h,[0,\delta])$.   
\end{proposition}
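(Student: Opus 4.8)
The plan is to realize the collar as the family of \emph{brake orbits} issuing from $\partial Q_h$, and to read off the exponent $3/2$ and the constant $1$ from the conformal degeneration of the JM metric at the Hill boundary. First I rewrite the integrand: with $ds_{kin}=\sqrt{2K_0}\,dt$ denoting arclength in the mass metric $2K_0$, one has $\sqrt{K_0}\sqrt{2(U-h)}\,dt=\sqrt{U-h}\,ds_{kin}$, so $A_{JM}(\gamma)=\int_\gamma\sqrt{U-h}\,ds_{kin}$ is the conformal rescaling by $\sqrt{W}$ of the mass metric, where $W:=U-h$ vanishes exactly on $\partial Q_h$. Because $U$ has no critical points in the collision-free region where $\partial Q_h$ sits (in reduced coordinates $U=V(x,y)/r$ gives $\partial_r U=-V/r^2<0$), $\partial Q_h$ is a real-analytic regular level set of $U$ and $W$ vanishes to first order there.

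Next I build the foliation. For $x\in\partial Q_h$ the brake solution $q(t)$ through $x$ has $\dot q(0)=0$, so by time-reversal symmetry and uniqueness it is an \emph{even} analytic function $q(t)=x+\tfrac12 t^2\nabla U(x)+O(t^4)$ (gradient in the mass metric), entering $\{W>0\}$ since $\nabla U$ points inward. Hence $W(q(t))$ and the mass length $\ell(t)$ are even, while $A_{JM}(t)=\sqrt2\int_0^t W\,dt'$ is odd; writing $\tau=t^2$ one gets $\ell=c_1(x)\tau+O(\tau^2)$ and $A_{JM}=\tau^{3/2}h(\tau)$ with $h$ analytic, $h(0)=|\nabla U(x)|^2/(3\sqrt2)>0$. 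I then \emph{define} $\delta$ by $A_{JM}=\delta^{3/2}$, i.e. $\delta=\tau\,h(\tau)^{2/3}$, which is analytic with nonvanishing $\tau$-derivative at $0$; consequently $\Phi(x,\delta):=q\big(t(\tau(\delta))\big)$ is an analytic diffeomorphism of $\partial Q_h\times[0,\epsilon]$ onto a collar $S_h^\epsilon$, the curve $y\mapsto\Phi(x,y)$ is a reparametrized brake arc whose mass length is $\ell=M_0(x)\delta+O(\delta^2)\le M\delta$, and by construction the JM action of the arc from $x$ to $\Phi(x,\delta)$ is exactly $\delta^{3/2}$.

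The heart is the sharp lower bound, which I attack by calibration. Define $S$ on $S_h^\epsilon$ to be the JM action accumulated along the brake orbit from $\partial Q_h$, so $S(\Phi(x,\delta))=\delta^{3/2}$, $S\equiv0$ on $\partial Q_h$, and $S$ is analytic. On the open set $\{W>0\}$ the JM metric is a genuine analytic Riemannian metric, the brake orbits are its unit-speed geodesics (Maupertuis), and for $\epsilon$ small this orthogonally-emanating geodesic field is caustic-free, so the Gauss lemma yields the eikonal relation $\|dS\|_{JM^*}=1$ with the level sets of $S$ JM-orthogonal to the brake orbits. Granting this, any rectifiable $\gamma$ from $q=\Phi(x,\delta)$ to $\partial Q_h$ that stays in the collar satisfies $A_{JM}(\gamma)\ge\big|\int_\gamma dS\big|=S(q)=\delta^{3/2}$, equality forcing $\dot\gamma$ to be a negative multiple of $\nabla_{JM}S$ at each point, hence $\gamma$ to run back along the brake orbit through $q$, possibly prolonged by an arc of $\partial Q_h$ on which $W=0$ and so $A_{JM}=0$ — exactly the stated rigidity. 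A curve leaving $S_h^\epsilon$ must reach the inner face $\{S=\epsilon^{3/2}\}$, and the calibration applied to its initial subarc already gives $A_{JM}(\gamma)\ge\epsilon^{3/2}-\delta^{3/2}>\delta^{3/2}$ for $\delta$ small, so such curves never compete. Finally, along each brake orbit $\dot W=\langle\nabla U,\dot q\rangle>0$, so $W$ increases with depth; on the face $\{y=\delta\}$ one has $W=c_2(x)\tau+O(\tau^2)\ge(\text{const})\,\delta$, while on $Q_h\setminus S_h^\epsilon$ the positive continuous $W$ exceeds this, and taking $\alpha=\inf_x W/\delta$ over the face gives $U\ge h+\alpha\delta$ on $Q_h\setminus S_h^\delta$ with $\alpha$ bounded away from $0$ as $\delta\to0$.

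I expect the main obstacle to be the calibration step \emph{carried through the degeneration} of the JM metric at $\partial Q_h$: justifying the Gauss lemma and the absence of caustics for the brake field right up to the boundary where the metric tensor vanishes, and making the constants $\epsilon$, $M$, $\alpha$ uniform over the non-compact Hill boundary (near binary collision $r\to\infty$). This will require controlling $|\nabla U|$ and the higher even brake-orbit coefficients uniformly, or restricting to a suitable exhaustion; the even-in-$t$ analyticity of brake orbits is what keeps $\Phi$ analytic across $\{y=0\}$ and is the device that tames the degeneration.
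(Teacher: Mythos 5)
Your blueprint is, in outline, the paper's own proof: the collar swept out by brake orbits, analyticity across $\partial Q_h$ via the even-in-$t$ substitution $\tau=t^2$, reparametrization of the collar coordinate so that the brake arc from $x$ to $\Phi(x,\delta)$ has JM action exactly $\delta^{3/2}$, a Gauss-lemma orthogonality statement giving the sharp lower bound with its rigidity case, and the expansion $U(\Phi(x,y))=h+c(x)\,y+O(y^2)$ for the final estimate. The paper's version of your calibration is to write the Jacobi metric in collar coordinates as $g_h(v,v)=(U-h)\kappa_h(x,y)(\xi,\xi)+\tfrac{9y}{4}\lambda^2$ and to bound $A_{JM}(\gamma)\ge\int_I\tfrac{3}{2}\sqrt{y(t)}\,|\dot y(t)|\,dt$ over the set $I$ of times the curve spends in the collar, i.e.\ to integrate $|d(y^{3/2})|$, which is your $|dS|$. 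However, three points in your write-up are either wrong as stated or deferred exactly where the paper has to do real work.

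First, your treatment of curves that leave the collar fails for $\delta$ near $\epsilon$: the inequality $\epsilon^{3/2}-\delta^{3/2}>\delta^{3/2}$ requires $\delta<2^{-2/3}\epsilon$, whereas the proposition quantifies over all $\delta\in(0,\epsilon)$. The repair is to calibrate the \emph{final} subarc (from the last crossing of the inner face down to $\partial Q_h$), which gives $\ge\epsilon^{3/2}>\delta^{3/2}$, or to use the paper's total-variation bound over $I$, which handles both cases at once and also yields the equality analysis. Second, and more seriously, your argument for $U\ge h+\alpha\delta$ on $Q_h\setminus S_h^\delta$ is not valid: positivity and continuity of $W=U-h$ on the noncompact set $Q_h\setminus S_h^\epsilon$ give no positive lower bound, and the whole content of the assertion is that points where $U$ is close to $h$ cannot occur far from the collar. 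The paper closes this with a connectedness argument: level sets of $U$ are arcwise connected, each brake fiber meets every level $c\in(h,h+\alpha\delta)$ by the intermediate value theorem, so a point $q\notin S_h^\delta$ with $U(q)<h+\alpha\delta$ could be joined, within its own level set, to a point of $S_h^\delta$, and the joining arc would have to cross the face $\Phi(\partial Q_h\times\{\delta\})$, on which $U\ge h+\alpha\delta$ --- a contradiction. Some such global argument is unavoidable.

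Third, the two issues you flag as ``obstacles'' are not optional refinements but the core of the proof, and you do not carry them out: (a) global injectivity of $\Phi$ --- a local diffeomorphism statement does not suffice over the noncompact $\partial Q_h$ --- which the paper obtains from a quantitative inverse function theorem (uniform bounds on second derivatives of the brake-orbit flow map) together with a sequence/contradiction argument; and (b) the Gauss lemma through the degeneration, which the paper proves by showing that $f(y;x)=g_h(\partial_y\Phi,\partial_x\Phi\,\xi)$ satisfies the linear ODE $\partial_yf=\mu(y)f$ with $f(0;x)=0$, hence $f\equiv0$, even though $\mu$ blows up at $y=0$. As it stands, your proposal is a correct blueprint of the paper's argument, with the right normalizations and the right exponent, but not yet a proof.
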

We postpone the proof of this Proposition, and instead  use it now to prove  Theorem \ref{thm:variational} and Lemma \ref{lemma:JM_Marchal}.   We will say that $S^{\epsilon}_h$ is a Seifert tubular neighborhood of the Hill boundary, and that 
$\Phi(\partial Q_h,\epsilon)$ is the inner boundary of $S^{\epsilon}_h$.
We prove now the first part of Theorem \ref{thm:variational}.
\begin{proposition} \label{exist-minim-hill}
Given a point $q_0$ in the interior of the Hill region $Q_h$, there exist a $JM$ action minimizer among rectifiable curves 
joining $q_0$ to the Hill boundary $\partial Q_h$.
\end{proposition}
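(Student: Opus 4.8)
The plan is to run the direct method of the calculus of variations. Set $m=\inf A_{JM}(\gamma)$ over rectifiable curves $\gamma$ joining $q_0$ to $\partial Q_h$; this infimum is finite, since any smooth path from $q_0$ to a nearby boundary point has $U-h$ and $K_0$ bounded along it and hence finite action, and clearly $m\ge 0$. Choose a minimizing sequence $\gamma_n$ with $A_{JM}(\gamma_n)\to m$, say $A_{JM}(\gamma_n)\le m+1$. The genuine difficulty, and the main obstacle, is that $A_{JM}$ degenerates on $\partial Q_h$ (where $U=h$): a bound on the action does not by itself bound the $2K_0$-length, because a curve could accumulate arbitrary length while hugging the boundary, where the conformal factor $\sqrt{U-h}$ is tiny. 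So the naive compactness argument fails and I must first produce a \emph{minimizing sequence of uniformly bounded length confined to a fixed compact set}.

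I would remedy this using the Seifert tubular neighborhood of Proposition \ref{seifert-nbd}. Fix $\delta_0\in(0,\epsilon)$ small enough that $q_0\notin \overline{S_h^{\delta_0}}$; this is possible because every point of $S_h^{\delta_0}$ lies within $2K_0$-distance $M\delta_0$ of $\partial Q_h$, while $d_0(q_0,\partial Q_h)>0$. Since each $\gamma_n$ starts outside the tube and ends on $\partial Q_h$, let $\tau_n$ be its first entry time into $\overline{S_h^{\delta_0}}$; then $q_n:=\gamma_n(\tau_n)$ lies on the inner wall $\Phi(\partial Q_h,\delta_0)$, say $q_n=\Phi(x_n,\delta_0)$. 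I replace the tail of $\gamma_n$ beyond $\tau_n$, which by Proposition \ref{seifert-nbd} has action $\ge \delta_0^{3/2}$, by the brake arc $y\mapsto \Phi(x_n,y)$, $y\in[0,\delta_0]$, whose action is exactly $\delta_0^{3/2}$ and whose length is $\le M\delta_0$. The resulting curve $\tilde\gamma_n$ then satisfies $A_{JM}(\tilde\gamma_n)\le A_{JM}(\gamma_n)$, so $\{\tilde\gamma_n\}$ is still minimizing. Its head $\gamma_n|_{[0,\tau_n]}$ lies in $Q_h\setminus S_h^{\delta_0}$, where $U\ge h+\alpha\delta_0$; writing $A_{JM}=\int \sqrt{2K_0}\,\sqrt{U-h}\,dt$, the head obeys ${\mathcal L}(\mathrm{head})\le A_{JM}(\gamma_n)/\sqrt{\alpha\delta_0}\le (m+1)/\sqrt{\alpha\delta_0}$. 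Hence every $\tilde\gamma_n$ has length at most $\Lambda:=(m+1)/\sqrt{\alpha\delta_0}+M\delta_0$.

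Uniform length measured from the fixed point $q_0$ then yields confinement: each $\tilde\gamma_n$ lies in the closed $2K_0$-ball of radius $\Lambda$ about $q_0$, which is compact because $2K_0$ is complete (Hopf--Rinow, in the form that closed bounded sets of a complete locally compact length space are compact). Applying the Hilbert compactness theorem to this family of uniformly bounded-length curves meeting the fixed compact set, I extract a subsequence converging in the Fr\'echet metric to a rectifiable curve $\gamma_*$; convergence of endpoints shows that $\gamma_*$ joins $q_0$ to $\partial Q_h$ (the latter being closed). Finally, lower semicontinuity of $A_{JM}$ gives $A_{JM}(\gamma_*)\le \liminf A_{JM}(\tilde\gamma_n)=m$, and since $\gamma_*$ is an admissible competitor $A_{JM}(\gamma_*)\ge m$; thus $A_{JM}(\gamma_*)=m$ and $\gamma_*$ is the desired minimizer. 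The one delicate point throughout is the boundary degeneracy described in the first paragraph; everything else is the standard direct method, once Proposition \ref{seifert-nbd} supplies both the action floor $\delta_0^{3/2}$ for the tail and the coercivity estimate $U\ge h+\alpha\delta_0$ off the tube.
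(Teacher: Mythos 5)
Your proof is correct and follows essentially the same route as the paper's: a minimizing sequence whose tails past the Seifert tube's inner wall are replaced by brake arcs (using the action floor $\delta_0^{3/2}$), the coercivity bound $U\ge h+\alpha\delta_0$ off the tube to get uniform length bounds, then Hilbert's compactness theorem and lower semicontinuity of $A_{JM}$. The only cosmetic difference is that where the paper splits into two cases ($q_0$ inside the tube, where the brake arc itself is the minimizer, versus outside), you instead shrink the tube parameter $\delta_0$ so that $q_0$ lies outside it, which is an equally valid arrangement.
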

\begin{proof}
Let $S^{\epsilon}_h$ be the Seifert tubular neighborhood given by Proposition \ref{seifert-nbd}. 
If $q_0\in S^{\epsilon}_h$ then  
the unique brake orbit joining $q_0$ to the Hill boundary is a $JM$-minimizer. 

If $q_0\notin S^{\epsilon}_h$, let $\gamma_n$ be a $JM$ minimizing sequence  of rectifiable curve joining 
$q_0$ to $\partial Q_h$.
Without loss of generality we can assume all these curves are   Lipschitz and   defined on the unit interval $[0,1]$. 
Let $t_n\in (0,1)$ be the first time 
such that $\gamma_n(t)$ touches the inner boundary $\Phi(\partial Q_h,\epsilon)$ of $S^{\epsilon}_h$. Let us replace $\gamma\left|_{[t_n,1]}\right.$ by the unique brake 
solution joining $\gamma_n(t_n)$ to $\partial Q_h$. By Proposition \ref{seifert-nbd}, this modification decreases the JM action of 
$\gamma_n$, so our sequence is still a minimizing one. Let $C$ be an upper bound for the numbers $A_{JM}(\gamma_n)$.
Applying   Proposition \ref{seifert-nbd}  again we get  
\begin{equation} \label{estim-JM-minim}
C\ge A_{JM}(\gamma_n) \ge  \epsilon^{3/2}+{\mathcal L}(\gamma_n\left|_{[0,t_n]}\right.)\sqrt{\alpha\epsilon},
\end{equation}   
therefore 
\begin{equation}  \label{estim-length-JM-min}
{\mathcal L}(\gamma_n)\le \frac{C-\epsilon^{3/2}}{\sqrt{\alpha\epsilon}} +M\epsilon,
\end{equation}
for all $n$, proving that the  lengths ${\mathcal L}(\gamma_n)$ are bounded. Since $\gamma_n(0)=q_0$ for all $n$, 
 Hilbert's theorem discussed above applies:  
the sequence of curves $ \gamma_n$ is relatively compact in the Fr\'echet topology. 
Therefore a subsequence of this sequence converges to  a rectifiable curve $\gamma$ in $Q_h$.
This 
$\gamma$ is a JM minimizer by the  lower semicontinuity of the JM action. 
\end{proof}
Let  $d_{JM}(q_0,q_1)$ denote the infimum of the $JM$-action
among rectifiable curves in $Q_h$ joining $q_0$ to $q_1$, and let
 $d_{JM}(q_0,\partial Q_h)$  denote 
the minimum of the $JM$-action among rectifiable curves in $Q_h$ joining $q_0$  
to the Hill boundary. \vspace{1mm}\\ \noindent

{\bf Lemma \ref{lemma:JM_Marchal}, the JM Marchal lemma, proof.}
\begin{proof}
First  we establish  the existence of a minimizer. 

If $d_{JM}(q_0,q_1)\ge d_{JM}(q_0,\partial Q_h)+d_{JM}(q_1,\partial Q_h)$, 
then   take a curve realizing the minimum in $d_{JM}(q_0,\partial Q_h)$, another  curve realizing the minimum in 
$d_{JM}(q_1,\partial Q_h)$ and join these two  curves  by any 
curve lying on the Hill boundary and connecting the endpoints of these two. In this way we get a   rectifiable curve
$\gamma$  joining 
$q_0$ to $q_1$ and (possibly) spending some time on the Hill boundary.  
Since the $JM$ action of any curve on the Hill boundary is     zero,  we have that  
$A_{JM}(\gamma)= d_{JM}(q_0,\partial Q_h)+d_{JM}(q_1,\partial Q_h)$. Hence $\gamma$ is our minimizer.
(As a bonus we have shown that  $d_{JM}(q_0,q_1)=d_{JM}(q_0,\partial Q_h)+d_{JM}(q_1,\partial Q_h)$
in this case.)

If $d_{JM}(q_0,q_1)< d_{JM}(q_0,\partial Q_h)+d_{JM}(q_1,\partial Q_h)$, let $\gamma_n$ be a $JM$-minimizing
sequence of rectifiable curves in $Q_h$ joining $q_0$ to $q_1$. We show  now that there exists $\epsilon>0$ such that
for  $n$ is sufficiently great,  the $\gamma_n$ do not intersect the Seifert tubular neighborhood $S^\epsilon_h$. 
Assume, for the sake of contradiction that there in fact exists a decreasing sequence of strictly positive real numbers 
$\epsilon_n$ such that $\epsilon_n\rightarrow 0$ as $n\rightarrow +\infty$ 
and a subsequence of the minimizing sequence, still denoted $(\gamma_n:[0,1]\rightarrow Q_h)_n$,  and a sequence
of times $t_n$ 
such that for every $n$, 
$\gamma_n (t_n)$ is in  $S^{\epsilon_n}_h$.  Let $\tilde{\gamma}_n$ be the curve 
defined in the following way.  Follow $\gamma_n$ from $q_0$ to $\gamma_n(t_n)$. Join $\gamma_n(t_n)$ to 
$\partial Q_h$ by the unique brake solution in $S^{\epsilon_n}_h$ with one end point  $\gamma_n(t_n)$.
Return along the  same brake solution, to $\gamma_n (t_n)$. 
Then continue along   $\gamma_n$ from $\gamma_n(t_n)$ to $q_1$. The curve 
$\tilde{\gamma}_n$ joins $q_0$ to $q_1$ is rectifiable  and touches $\partial Q_h$, hence 
$A_{JM}(\tilde{\gamma}_n)\ge d_{JM}(q_0,\partial Q_h)+d_{JM}(q_1,\partial Q_h)$. By proposition  \ref{seifert-nbd} we have
$$
d_{JM}(q_0,\partial Q_h)+d_{JM}(q_1,\partial Q_h)-2\epsilon_n^{3/2}\le  
A_{JM}(\tilde{\gamma}_n)-2\epsilon_n^{3/2}\le A_{JM}(\gamma_n).
$$     
Taking the limit $n\rightarrow +\infty$ we get 
$d_{JM}(q_0,\partial Q_h)+d_{JM}(q_1,\partial Q_h)\le d_{JM}(q_0,q_1)$,   a contradiction.
We have established our desired   $\epsilon>0$ with  the property that none of the  curves 
$\gamma_n$ ($n$ sufficiently large)  intersect  $S^\epsilon_h$.  

Let $C>0$ be an upper bound of $A_{JM}(\gamma_n)$ and 
$\alpha=\alpha(\epsilon)$ the positive constant given by Proposition  \ref{seifert-nbd}. 
A computation similar to (\ref{estim-JM-minim}) and (\ref{estim-length-JM-min}) gives 
$$
{\mathcal L}(\gamma_n)\le \frac{C}{\sqrt{\alpha\epsilon}}.
$$ 
Hilbert's Theorem again yields  a curve $\gamma$ in $Q_h$ joining $q_0$ to $q_1$ which
the  a subsequence of the  $\gamma_n$ converges to  in the Fr\'echet topology.   By lower semicontinuity of 
$A_{JM}$, the curve $\gamma$ is a $JM$-minimizer. 

Finally, we show that   any minimizer 
$\gamma : [t_0,t_1]\rightarrow Q_h$ is collision-free and  that every subarc of $\gamma$,
upon being  reparametrized,  is a true  solution 
to Newton's equation.  
The classical Lagrangian action on the reduced space $Q$ is defined by 
$$
{\mathcal A}_{red}(\sigma)=\int_{t_0}^{t_1} L_{red}(\sigma(t),\dot{\sigma}(t)) dt
$$
where $\sigma : [t_0,t_1]\rightarrow Q$ is any absolutely continuous path and the reduced 
Lagrangian $L_{red}$ is defined in (\ref{lagr_red}). If $t\in [t_0,t_1]$ is such that $\gamma(t)\in \partial Q_h$, obviously 
$\gamma(t)$ is not a collision, hence it suffices  to prove the statement for  subarcs  
$\gamma\left|_{[a,b]}\right.$, 
 $[a,b]\subset [t_0,t_1]$ of $\gamma$ which never touch the Hill boundary.  Let $[a,b]$ be such an interval. 
By the  inequality  $A^2 + B^2 \ge 2AB$
applied to the factors  $A = \sqrt{K_0},  B = \sqrt{ U-h}$ of the integrand for $A_{JM}$
we see that  
$$
\sqrt{2}A_{JM}\left(\gamma\left|_{[a,b]}\right.\right)\le {\mathcal A}_{red}(\gamma\left|_{[a,b]}\right.)-h(b-a)
$$ 
with equality if and only if the energy $K_0-U$ evaluated on $\gamma$ is equal to $-h$ for almost all $t\in [a,b]$. 
We may assume,  without loss of generality, that  $\gamma$ is parametrized by its $2K_0$-arclength parameter  $s$ 
so that in the integrand  $K_0(s)=1/2$ for almost every $s\in [a,b]$. Since $\gamma$ is a $JM$-minimizer, 
we know that $\gamma(a)\neq \gamma(b)$, that the action $A_{JM}(\gamma\left|_{[a,b]}\right.)$ is finite, and
that  the set of 
$s\in [a,b]$ such that $\gamma(s)$ is a collision is a closed set of   zero Lebesgue measure. 
Let $\sigma : [\alpha,\beta]\rightarrow Q_h$ be the 
reparametrization of $\gamma\left|_{[a,b]} \right.$ defined by $\sigma(t)=\gamma(s(t))$, 
where $s=s(t)$ is the inverse function of
$$
t=t(s)=\int_{a}^s \frac{d v}{\sqrt{2(U(\gamma(v))-h)}}, \qquad s\in [a,b].
$$    
The function $t=t(s)$ is ${\mathcal C}^1$, strictly increasing and $\frac{dt}{ds}(s)>0$ on an open set 
with full measure, hence the inverse function $s=s(t)$ is absolutely continuous, and since $s\mapsto \gamma(s)$ 
is Lipschitz, 
$t\mapsto \sigma(t)=\gamma(s(t))$ is also absolutely continuous. 
A simple computation shows that the energy function $K_0-U$ 
of $\sigma$ is  the constant  $-h$ for almost every $t\in [\alpha,\beta]$. Hence  
$$
\sqrt{2}A_{JM}\left(\gamma\left|_{[a,b]}\right.\right)=\sqrt{2}A_{JM}(\sigma)
={\mathcal A}_{red}(\sigma)-h(\beta-\alpha).
$$ 

But $\gamma\left|_{[a,b]}\right.$ is a minimizer of $A_{JM}$! So this equation
says that  $\sigma$  minimizes  ${\mathcal A}_{red}$ among all 
absolutely continuous paths in $Q_h$ joining $\sigma(\alpha)$ to $\sigma(\beta)$ in time $\beta-\alpha$!
Let $\xi :[\alpha,\beta]\rightarrow \C^2$ be  a continuous zero angular momentum lift of
$\sigma$ to the nonreduced space $\C^2$. (There are a circle's worth of such paths.)
 The path $\xi$ is a local minimizer of 
the nonreduced L agrangian action $\int_{\alpha}^{\beta} L dt$ among absolutely continuous paths joining $\xi(\alpha)$ 
to $\xi(\beta)$ in time $\beta-\alpha$. By Marchal's Theorem (see \cite{Marchal} and \cite{Chenciner-2002}), 
$\xi(t)$ is collision-free for $t\in(\alpha,\beta)$ and is a classical solution of Newton's   equations. The
quotient  path $\sigma(t)$ is 
thus a collision-free solution 
of the reduced equations with energy $-h$. 
\end{proof} 

\vspace{1.5mm}

To  prove theorem \ref{thm:variational} we will need certain  
monotonicity  properties for the shape potential $V$. Recall that 
the shape sphere $\CP^1$ endowed with its Fubini-Study metric (\ref{eq_FSnorm}) is isometric to $(S^2, ds^2/4)$, where $ds^2$ is the 
standard metric on the unit sphere $S^2$.  The isometry is  
$$
St : \CP^1\rightarrow S^2\subset \R^3,\qquad [\xi_1,\xi_2]\mapsto \frac{1}{r^2}(w_1,w_2,w_3)
$$
where 
\begin{equation} \label{hopf}
\begin{array}{rl}
w_1&=\mu_1|\xi_1|^2-\mu_2|\xi_2|^2 \\
w_2+\imath w_3&=2\sqrt{\mu_1 \mu_2}\xi_2 \overline{\xi_1} \\
r^2&=\mu_1|\xi_1|^2+\mu_2 |\xi_2|^2.
\end{array}
\end{equation}
Collinear configurations correspond   to the great circle $w_3=0$ which  we call the collinear equator. The
Northern hemisphere 
($w_3\ge 0$) corresponds to positively oriented triangles. The three binary collision points
 $b_{12}, b_{23}, b_{31}$ lie on the equator and split it into three arcs
 denoted $C_1, C_2, C_3$ with the endpoints of  $C_j$ being $b_{ij}$ and $b_{jk}$.
 Thus $C_j$ consists of  collinear 
shapes in which    $q_j$ lies between $q_i$ and $q_k$. 

Introduce  standard
  spherical polar coordinates $(\phi,\theta)$ on the shape 
sphere so that  one of the binary collision points $b$, say $b=b_{12}$ is  the origin ($\phi = 0$)  and so that   the Northern  hemisphere is  defined by $0 \le \theta \le \pi$.  The distance of a point from $b$ relative to the standard metric is
$\phi$ and is a  function of $r_{12}$ alone \cite{Mont}:  $\phi = \phi(r_{12});  r_{12} = r_{12} (\phi)$ on the shape sphere.
Thus  setting $\phi = const.$ is the same as setting $r_{12} = const.$, and  defines a geometric  circle 
$\sigma = \sigma(\phi)$ on the shape sphere.
 The collinear equator is the union of the curves    $\theta=0$ and $\theta = \pi$.  The arcs $C_1$ and $C_2$
 are adjacent to $b$ and we can choose coordinates so that    $C_1$  is contained in the half equator $\theta=0$ 
 while  $C_2$  contained in the other half equator
$\theta=\pi$.   In the case of equal masses, $\theta=\pi/2$  
defines the isosceles circle through the collision point $b=b_{12}$, lying in the Northern hemisphere.  

\begin{lemma} \label{mutual-distances}
Let $\sigma$ be any of the  half-circles $\phi =const$  centered at $b_{12}$ and lying in the Northern Hemisphere
of the shape sphere. Then $\sigma$ can be parameterized so that  $V|_{\sigma}$, the restriction of $V$ to $\sigma$,  is strictly convex. Consequently $V|_{\sigma}$   has a unique minimum.    If that minimum is an endpoint  of
$\sigma$  then
it  lies on the `distant' collinear arc $C_3$. Otherwise, the minimizer is an interior minimum and  coincides
with the intersection of $\sigma$ with the isosceles arc $r_{13} = r_{23}$.  
\end{lemma}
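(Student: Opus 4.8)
\medskip
\noindent\textbf{Proof proposal.}
The plan is to turn the statement into the strict convexity of a function of a single real variable and then to read off the location of the minimum from the sign of one derivative. First I would record that the constraint $\phi=\text{const}$ is the same as $\rho_{12}=\text{const}$, since $\phi=\phi(r_{12})$. Hence along $\sigma$ the term $m_1m_2/\rho_{12}$ of $V=\frac{m_1m_2}{\rho_{12}}+\frac{m_1m_3}{\rho_{13}}+\frac{m_2m_3}{\rho_{23}}$ is constant, and up to an additive constant $V|_\sigma=\frac{m_1m_3}{\rho_{13}}+\frac{m_2m_3}{\rho_{23}}$. Everything now rests on choosing the right parameter. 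Realizing the shape sphere as the unit sphere through the map $St$ of (\ref{hopf}), the spherical distance $d_{ij}$ from a shape to $b_{ij}$ satisfies $\cos d_{ij}=\hat b_{ij}\cdot(w_1,w_2,w_3)$, a linear function of the ambient coordinates, and $\rho_{ij}^2$ is an affine function of $\cos d_{ij}$; thus each $\rho_{ij}^2$ is the restriction to the sphere of an affine function of $(w_1,w_2,w_3)$. The pole $b_{12}$ sits at $w_1=-1$, so $\sigma$ is a latitude circle $w_1=\text{const}$, which I would parameterize by $p=\cos\theta$ (equivalently by the coordinate $w_2$ along $\sigma$). The essential geometric input is that $b_{13}$ and $b_{23}$ lie on the collinear equator $\{w_3=0\}$: this forces the affine functions $\rho_{13}^2,\rho_{23}^2$ to have no $w_3$-component, so they restrict to $\sigma$ as honestly affine functions of $p$, say $\rho_{13}^2=P_{13}+A_{13}p$ and $\rho_{23}^2=P_{23}+A_{23}p$ with $A_{13},A_{23}\neq 0$ (as $\sigma$ is nondegenerate).

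Strict convexity is then immediate in this parameter: on the open Northern half of $\sigma$ one has $\rho_{1k}>0$, and each summand $\rho_{1k}^{-1}=(P_{1k}+A_{1k}p)^{-1/2}$ has second derivative $\frac34 A_{1k}^2(P_{1k}+A_{1k}p)^{-5/2}>0$. Thus $V|_\sigma$ is a sum of strictly convex functions, hence strictly convex, and has a unique minimum; this proves the first two assertions.

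For the location of the critical point I would compute $\frac{d}{dp}V|_\sigma=-\frac{m_3}{2}\bigl(m_1A_{13}\rho_{13}^{-3}+m_2A_{23}\rho_{23}^{-3}\bigr)$. A short calculation in the $St$-coordinates shows $A_{13}$ and $A_{23}$ are proportional (with opposite signs) to $\nu_2$ and $\nu_1$, with a common factor depending only on the masses and the radius of $\sigma$. The crux of the whole lemma is then the mass identity $m_1\nu_2=m_2\nu_1=\mu_1$, which yields $m_1A_{13}=-m_2A_{23}$; substituting gives $\frac{d}{dp}V|_\sigma=c\,(\rho_{13}^{-3}-\rho_{23}^{-3})$ for a constant $c=-\frac12 m_1m_3A_{13}\neq 0$ of definite sign. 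This vanishes precisely when $\rho_{13}=\rho_{23}$, i.e. when $r_{13}=r_{23}$. Consequently the unique critical point of $V|_\sigma$ is its intersection with the isosceles arc $r_{13}=r_{23}$, and when that point lies in the interior of the Northern half of $\sigma$ it is the minimum, giving the third assertion.

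Finally I would dispose of the endpoint case using the sign of this same derivative together with collinear betweenness. The derivative forces that an endpoint can be the minimizer only if the appropriate inequality between $r_{13}$ and $r_{23}$ holds there (for instance $r_{13}\ge r_{23}$ at the $C_1$-end and $r_{13}\le r_{23}$ at the $C_2$-end). But on $C_1$ the mass $q_1$ lies between $q_2$ and $q_3$, so $r_{23}=r_{12}+r_{13}>r_{13}$; symmetrically $r_{13}>r_{23}$ on $C_2$; and a binary-collision endpoint gives $V|_\sigma=+\infty$. Each of these contradicts a minimum at an endpoint lying on $C_1\cup C_2$ or at a collision, so a minimizing endpoint must lie on the distant arc $C_3$. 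The step I expect to be the main obstacle is exactly this derivative computation, and in particular recognizing the cancellation $m_1\nu_2=m_2\nu_1=\mu_1$ that pins the critical point to the isosceles arc for all masses; the surrounding bookkeeping — translating $\phi=\text{const}$, the map $St$, and the equatorial positions of $b_{13},b_{23}$ into the affine dependence of $\rho_{13}^2,\rho_{23}^2$ on $p$ — is routine but must be carried out with care over signs.
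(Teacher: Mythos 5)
Your proof is correct, and every step checks out: the affine dependence of $\rho_{13}^2,\rho_{23}^2$ on the latitude parameter $p$ (with slopes proportional to $\nu_2$ and $-\nu_1$), the cancellation $m_1\nu_2=m_2\nu_1=\mu_1$ giving $\frac{d}{dp}V|_\sigma=c\,(\rho_{13}^{-3}-\rho_{23}^{-3})$, and the endpoint exclusion via betweenness are all sound. The overall strategy is the same as the paper's — reduce to strict convexity of a one-variable function of the form $\sum(\text{affine})^{-1/2}$, then locate the minimum by the sign of the derivative — but the implementation is genuinely different. The paper works in the Albouy--Chenciner squared-distance coordinates $s_i=r_{jk}^2$, in which the admissible shapes form a convex cone (Heron's inequality), $U$ in the form (\ref{eq:C2}) is manifestly strictly convex there, and $\sigma$ is cut out by the two affine constraints $r^2=1$, $s_3=c$, hence is a line segment; strict convexity of $V|_\sigma$ is then free, and the derivative formula (\ref{derivative-U-mutual}) pins the critical point to $s_1=s_2$. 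The mass cancellation you isolate as the crux is absorbed there into the constraint $\sum\mu_i s_i=1$, so it never appears explicitly. Your route stays inside the spherical model (\ref{hopf}) that the paper has already set up, at the cost of verifying the affine structure by hand; what it buys is that it makes explicit the geometric reason for that structure (each $\rho_{ij}^2$ is affine in $\cos d_{ij}$, and $b_{13},b_{23}$ lie on the equator, so no $w_3$-component survives), whereas the paper's route buys manifest convexity and mass-symmetric bookkeeping at the cost of importing a new coordinate system. One cosmetic remark: your convexity step should be stated on the open interior of $\sigma$ (where $\rho_{13},\rho_{23}>0$), with the possible value $+\infty$ at a collision endpoint handled exactly as you do, by noting it can never be the minimum.
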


{\bf Remark 1:}  In all cases, an endpoint
of $\sigma$ which lies on    arc    $C_1$ or  $C_2$
  is a strict local maximum of   the restricted $V$. 
  
  {\bf Remark 2:}  The minimum of $V|_{\sigma}$ is an endpoint of $\sigma$  
  if and only if  $\sigma$ does not intersect the isosceles arc at  an interior point.

  {\bf Remark 3:}   When  $m_1=m_2$ the isosceles circle in the Northern hemispher (defined by $\theta=\pi/2$)
  bisects all the circles $\sigma$ and so the minimum of   $V|_{\sigma}$
  always occurs at this bisection point  and thus is always interior.

\begin{proof} Use    squared length coordinates $s_i = r_{jk}^2$  introduced by Lagrange
 and re-introduced to us by Albouy  \cite{Albouy-Chenciner}. 
 The $s_i$ are subject to the constraints
 $$ s_i \ge 0$$
 and
 \begin{equation}
 \label{eq:Heron}
 2 s_1 s_2 + 2 s_2 s_3 +2 s_3 s_1  -  s_1 ^2  - s_2 ^2  - s_3 ^2  \ge 0
 \end{equation}
 which together define a convex cone in  the 3-space with coordinates $s_i$. 
 The points of this cone   faithfully represent
 congruence classes of triangles,  where two triangles related by reflection are now considered
 equivalent.    The origin of the 
 second inequality is   Heron's formula
 for the signed area $\Delta$ of a triangle:  
 $16 \Delta^2 = 2 s_1 s_2 + 2 s_2 s_3 +2 s_3 s_1  -  s_1 ^2  - s_2 ^2  - s_3 ^2 $.
 
  In squared length  coordinates
 \begin{equation}
 \label{eq:C1}r^2 = \mu_1 s_1 + \mu_2 s_2 +  \mu_3 s_3 
 \end{equation}
 \begin{equation}\label{eq:C2}
U= m\left({\mu_1 \over \sqrt{s_1}} + {\mu_2 \over \sqrt{s_2}} + {\mu_3 \over \sqrt{s_3}}\right).
 \end{equation}
A  half-circle $\sigma$  as in the lemma
is defined by the two linear  constraints $r^2 = 1$,  $s_3 = r_{12}^2=c$
 in the $s_i$.  Consequently, such a  half-circle is represented by  a closed  interval whose endpoints
correspond to the   collinear triangles at which the  inequality (\ref{eq:Heron}) becomes  an equality.
  $U$ is a strictly convex function of $(s_1, s_2, s_3)$ in the region $s_i \ge 0$
and   $V$ on the half-circle becomes    $U(s_1, s_2, s_3)$ restricted to this closed interval.
Now a strictly convex function restricted to a convex set remains strictly convex, so relative to any
affine coordinate on the interval which represents $\sigma$ we see that $V$ is strictly convex, as claimed.

We can parameterize  the half-circle $\sigma$ by
$$
\left\{
\begin{array}{rl}
s_1(u)&=\frac{1}{\mu_1}(1-\mu_2 u -\mu_3 c) \\
s_2(u)&=u \\
s_3(u)&=c
\end{array}
\right.
$$
where $u\in [u_-,u_+]$, with    $u=u_-$ corresponding to $\theta=0$, and $u=u_+$ corresponding  to $\theta=\pi$.  
A simple computation gives
\begin{equation} \label{derivative-U-mutual}
\frac{d}{du} V(s_1(u),s_2(u),s_3(u))=\frac{m\mu_2}{2}\left(s_1(u)^{-3/2}-s_2(u)^{-3/2}\right),
\end{equation}
which proves that $V|_{\sigma}$ has an interior  critical point   if and only if $\sigma$  cross the isosceles curve 
$s_1=s_2$. Convexity implies this crossing point is the unique minimum. 

If the starting point $u = u_-$
of $\sigma$ is on $C_1$, then at this point we have
1 between 2 and 3 so that $r_{12} + r_{13} = r_{23}$.
Squaring, we see that $s_2 > s_1$ from which it follows that  the derivative (\ref{derivative-U-mutual}) is negative at $u=u_-$,
and showing that this point is a local maximum for $V_{\sigma}$.
A similar  argument shows that  if an endpoint    of $\sigma$ lies on  $C_3$ then that
endpoint is also a local maximum of $V_{\sigma}$. (If the endpoint of $\sigma$,  corresponding to $u =u_+$
lies on $C_3$
then the derivative (\ref{derivative-U-mutual}) is positive.)
\end{proof}

We prove now continuity properties of the $JM$-distance $d_{JM}(q_0,q_1)$ introduced just before the proof of Lemma 
\ref{lemma:JM_Marchal}. If $q_0$ and $q_1$ are collinear we denote by $d^{c}_{JM}(q_0,q_1)$ the infimum of the $JM$-action
among collinear rectifiable curves in $Q_h$ joining $q_0$ to $q_1$. In a similar way, $d^{c}_{JM}(q_0,\partial Q_h)$ denotes
the minimum of the $JM$-action among collinear rectifiable curves in $Q_h$ joining $q_0$ to the Hill boundary.
\begin{lemma} \label{continuity-JM-dist}
$d_{JM}:Q_h\times Q_h\rightarrow \R$ and $d^{c}_{JM}:C_h\times C_h\rightarrow \R$ are continuous functions.
\end{lemma}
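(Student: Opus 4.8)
The functional $A_{JM}$ is the length functional of the conformal line element $\sqrt{U-h}\,\sqrt{2K_0}$ on $Q_h$, so $d_{JM}$ is an intrinsic (length) pseudo-distance: it is nonnegative, finite for every pair $(q_0,q_1)$ (finiteness follows from connectedness of $Q_h$ together with the existence of minimizers established in Lemma \ref{lemma:JM_Marchal}), and it obeys the triangle inequality since one may concatenate competing curves. The triangle inequality gives
\[
|d_{JM}(q_0,q_1)-d_{JM}(q_0',q_1')|\le d_{JM}(q_0,q_0')+d_{JM}(q_1,q_1'),
\]
so the continuity of $d_{JM}$ on $Q_h\times Q_h$ reduces to the single local statement that $d_{JM}(q,q')\to 0$ as $q'\to q$ in $Q_h$. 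I would prove this by producing, for $q'$ near $q$, an explicit short test curve and bounding its $A_{JM}$ from above, distinguishing three cases according to the position of $q$.

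\emph{(i) $q$ interior and not a collision.} On a small compact neighborhood $N$ of $q$ disjoint from $\partial Q_h$ and from the collision set, $U-h$ lies between two positive constants and $2K_0$ is comparable to the Euclidean metric, so $(U-h)\,2K_0$ is an honest smooth Riemannian metric on $N$. Its distance function is continuous and dominates $d_{JM}$ (segments in $N$ are admissible competitors), whence $d_{JM}(q,q')\le C\,|q-q'|\to 0$.

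\emph{(ii) $q\in\partial Q_h$.} Here the metric degenerates and I would invoke the Seifert tubular neighborhood of Proposition \ref{seifert-nbd}. For $q'$ close enough to $q=\Phi(x,0)$ we may write $q'=\Phi(x',\delta')\in S_h^{\epsilon}$ with $x'\to x$ and $\delta'\to 0$. Take the test curve that first runs along $\partial Q_h$ from $\Phi(x,0)$ to $\Phi(x',0)$ and then follows the brake arc $y\mapsto\Phi(x',y)$, $y\in[0,\delta']$, to $q'$. The boundary arc lies in $\{U=h\}$, so its $A_{JM}$ vanishes, while by Proposition \ref{seifert-nbd} the brake arc has $A_{JM}=(\delta')^{3/2}$; hence $d_{JM}(q,q')\le(\delta')^{3/2}\to 0$. \emph{(iii) $q$ a collision.} Collisions lie in the interior of $Q_h$ (where $U=\infty\neq h$), but the metric is singular. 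Near a binary collision $U\sim m_im_j/r_{ij}$ and the reduced kinetic metric is comparable to a smooth metric in the Levi--Civita variable $\sqrt{r_{ij}}$, so a curve that moves the regular directions a short distance and then collapses $r_{ij}$ radially has $A_{JM}$ of order $\int_0^{\rho}s^{-1/2}\,ds=2\sqrt{\rho}\to0$ with $\rho=r_{ij}(q')$; an analogous homothetic estimate ($U\sim V/r$, $\int_0^{\rho}s^{-1/2}ds$) handles the triple collision point $r=0$. This shows collisions are at finite $d_{JM}$-distance and that $d_{JM}(q,q')\to0$.

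These three cases establish the local statement, hence the continuity of $d_{JM}$. The argument for $d^{c}_{JM}$ on $C_h\times C_h$ is identical, since every test curve above can be taken collinear: the collinear manifold is invariant, so the brake arcs $\Phi(x',\cdot)$ issuing from collinear boundary points in (ii) remain collinear, the boundary arcs may be chosen inside $C_h\cap\partial Q_h$, and the radial collapse of a colliding pair along its line in (iii) is collinear; moreover $C_h$ excludes triple collision ($r>0$), so only binary collisions occur. The only genuine difficulty is the degeneracy of the metric at the Hill boundary, which is precisely what Proposition \ref{seifert-nbd} is designed to control, with the collision singularity as a secondary technical point dispatched by the finite-distance estimate in (iii).
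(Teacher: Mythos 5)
Your overall strategy coincides with the paper's (triangle inequality reduction, then explicit test curves case by case), and your cases (i) and (ii) are sound --- indeed your boundary case via Proposition \ref{seifert-nbd} is more explicit than the paper's one-line remark that the classical argument survives the degeneracy. The genuine gap is in case (iii), and it is not a ``secondary technical point'': it is exactly where the paper's proof has to do work. Your claim that near a binary collision the relevant metric is ``comparable to a smooth metric in the Levi--Civita variable $\sqrt{r_{ij}}$'' is false in the directions tangent to the collision set. In Levi--Civita coordinates $z$ (so $r_{12}=|z|^2$), the JM line element squared is approximately
$$4m_1m_2\mu_1\,|dz|^2+\frac{m_1m_2\mu_2}{|z|^2}\,|d\xi_2|^2+\cdots,$$
smooth and nondegenerate in the collision direction $z$ but blowing up like $1/|z|^2$ in the remaining (``regular'') directions; in particular any nonconstant curve lying inside the collision set has infinite $A_{JM}$, since $U=\infty$ there. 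Consequently your test curve --- move the regular coordinates by a distance $d$ while $r_{ij}\approx\rho$, then collapse $r_{ij}$ radially --- has action of order $d/\sqrt{\rho}+2\sqrt{\rho}$, not $2\sqrt{\rho}$: the first leg is not free. Along sequences $q'\to q$ with, say, $\rho=r_{ij}(q')=d^4$ where $d=|r(q')-r(q)|$, this bound diverges, so your curves do not establish $d_{JM}(q,q')\to 0$.

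The missing idea, which is the substance of the paper's proof, is a \emph{balanced} three-leg path: starting from the collision point, first move \emph{away} from the collision radially (in the paper's spherical shape coordinates, from $\phi=0$ out to $\phi=\hat\phi$, cost $O(\sqrt{\hat\phi}\,)$), then move the regular coordinates at that safe distance (cost $O(d/\sqrt{\hat\phi}\,)$), then descend to the target (cost $O(\sqrt{\hat\phi}+\sqrt{\phi_1})$); optimizing $\hat\phi=d$ gives $d_{JM}(q,q')\le C\sqrt{d}+o(\sqrt{d})\to 0$. The same defect infects your triple-collision estimate: homothetic collapse of $q'=(r,s')$ to the origin costs about $\sqrt{V(s')\,r}$, which is unbounded --- indeed infinite --- when the shape $s'$ lies on or near a binary collision ray, and such points $q'$ do approach the origin inside $Q_h$. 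The paper instead exits the triple collision along the ray through a \emph{fixed} non-collision shape $\hat s$ up to radius $r$, and then joins $(r,\hat s)$ to $q'$ by an arc on the sphere of radius $r$; that arc costs at most $\sqrt{2r}\int\sqrt{V}\,d\sigma$, which is $O(\sqrt{r})$ uniformly in $q'$ because $V\sim c/\phi$ near collision shapes and $\int\phi^{-1/2}\,d\phi$ converges. So the conclusions you assert in (iii) are true, but your constructions do not prove them; the square-root balancing (or an equivalent device) has to be put in by hand.
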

\begin{proof}
By  the triangle inequality it  suffices to prove that for every $q_0\in Q_h$ or $c_0\in C_h$  the functions
$q\rightarrow d_{JM}(q_0,q)$ or $q\rightarrow d^{c}_{JM}(c_0,q)$) are continuous in $q_0$ or in $c_0$. 
We give the proof only for $q\rightarrow d_{JM}(q_0,q)$.  The proof for  $q\rightarrow d^c_{JM}(q_0,q)$  is similar.

If $q_0$ is a non-collision shape, not lying on the Hill boundary the $d_{JM}(q_0,q)$ is a regular Riemannian distance in a neighborhood of $q_0$, so 
the result is classical.  If $q_0$ lies on the Hill boundary, the function
is a pseudo-distance: $d(q_0, q) = 0$ with $q_0 \ne q$ can happen),
but the classical result goes through with no changes.
So assume now that $q_0$ is a collision point. We consider two case. \vspace{0.5mm} \\ \noindent
{\it 1st case : $q_0$ is the total collision. } Let us choose any non-collision 
collinear configuration on the shape sphere $\hat{s}\in S^2$. If $q=(r,s)\in Q_h$, 
and $\gamma$ is the path obtained by joining the total collision to 
$\hat{q}=(r,\hat{s})$ along the ray through $\hat{q}$, and then joining $\hat{q}$ to $q$ 
along a segment 
of geodesic of $\{r\}\times S^2$, a simple computation shows the existence of a constant $C>0$, 
independent of $q$, such that 
$A_{JM}(\gamma)\le C\sqrt{r}+{\mathcal O}(r\sqrt{r})$, hence $d_{JM}(q_0,q)\rightarrow 0$ as $q\rightarrow q_0$.
\vspace{2mm}\\ \noindent
{\it 2nd case : $q_0$ is a partial collision.} Assume that $q_0$ is on the collision ray $r_{12}=0$. Introduce
 spherical coordinates 
$(\phi,\theta)$ as before   centered at the binary collision $b_{12}$, 
so that $\phi=0$ corresponds to $b_{12}$, and $\theta=0$ or $\pi$ corresponds to the collinear circle. If 
$r_0$ is the radial coordinate of $q_0$ and $q_1\in Q_h$ is a point with radial coordinate $r_1$ and 
spherical coordinates $(\phi_1,\theta_1)$, let us choose a $\hat{\phi}\in (0,\pi)$ and define 
the following three paths in spherical coordinates 
$$
\begin{array}{rl}
\gamma_1 :\qquad &r(t)=r_0,\quad \phi(t)=t\hat{\phi},\quad \theta(t)=\theta_1,\quad t\in [0,1] \\
\gamma_2 :\qquad &r(t)=(1-t)r_0+t r_1, \quad \phi(t)=\hat{\phi},\quad \theta(t)=\theta_1,\quad t\in [0,1] \\
\gamma_3 :\qquad &r(t)=r_1,\quad \phi(t)=(1-t)\hat{\phi}+t \phi_1,\quad \theta(t)=\theta_1,\quad t\in [0,1].
\end{array}
$$  
Let $\gamma$ be the path obtained by pasting together $\gamma_1$, $\gamma_2$ and $\gamma_3$.
This   $\gamma$ joins $q_0$ to $q_1$. 
By (\ref{eq_rijJac}) and (\ref{hopf}) we get 
$$
V(\phi,\theta)=\frac{c}{\phi}+o(\frac{1}{\phi}),
$$
where $c>0$ depends only on the masses. 
If $r_0/2\le r_1\le 3r_0/2$, a simple computation shows that  
$$
\begin{array}{rl}
A_{JM}(\gamma_1)&=\sqrt{2c r_0 \hat{\phi}} +{\mathcal O}(\hat{\phi}^{3/2}), \\
A_{JM}(\gamma_2)&\le\frac{2 |r_1-r_0|}{\sqrt{r_0}}\left( \sqrt{\frac{c}{\hat{\phi}}}+o(1/\hat{\phi})\right), \\
A_{JM}(\gamma_3)&\le  \sqrt{2c r_1 \hat{\phi}} +{\mathcal O}(\hat{\phi}^{3/2}), 
\end{array}
$$
hence chosing $\hat{\phi}=|r_1-r_0|$ we get 
$$
d_{JM}(q_0,q_1)\le A_{JM}(\gamma)\le C\sqrt{|r_1-r_0|}+o(\sqrt{|r_1-r_0|}),
$$
where $C>0$ depends only on $r_0$, hence $d_{JM}(q_0,q_1)\rightarrow 0$ as $q_1\rightarrow q_0$.  
\end{proof}
{\bf Proof of Theorem \ref{thm:variational}. }
Let $q_0\in C_h$ be a collinear configuration. By Proposition \ref{exist-minim-hill} there exists a $JM$-minimizer 
$\gamma :[0,T]\rightarrow Q_h$ among paths starting in $q_0$ and ending on the Hill boundary $\partial Q_h$. 
The JM-action of a path on the Hill boundary is zero, therefore if $t_0$ is the first time 
where $\gamma(t_0)\in \partial Q_h$, then $\gamma(t)\in \partial Q_h$ for all $t\in [t_0,T]$. We
may cut out the    arc $\gamma\left|_{[t_0,T]}\right.$ from $\gamma$, or equivalently,
 assume that $\gamma(t)\notin \partial Q_h$ for all $t < T$.    
For every $t\in (0,T)$ the restriction $\gamma\left|_{[0,t]}\right.$ is a fixed end point $JM$-minimizer.  By Lemma \ref{lemma:JM_Marchal}, $\gamma$ is a classical brake solution, and  is collision-free for $t\in (0,T]$.

Let us prove now that if $\gamma$ is not collinear at all times, it has a unique syzygy : the point $q_0$.
Indeed, assume for the sake of contradiction that $\gamma(\tau)$ is collinear for some $0<\tau\le T$. 
Recall that the set of initial conditions tangent to the collinear submanifold yield collinear curves.
Collinear brake points (with zero velocity)  are such initial conditions.  Consequently,   $\tau=T$ is impossible,
for if   $\gamma(T) \in \partial Q_h$ were collinear, all of   $\gamma$ would be collinear. So $\tau < T$.
By the same reasoning, $\gamma$ cannot be tangent to the collinear  manifold at $\tau < T$. 
Consider the path obtained by keeping $\gamma\left|_{[0,\tau]}\right.$ as it is and reflecting $\gamma\left|_{[\tau,T]}\right.$ with respect to the syzygy plane.
This new curve has   the same $JM$-action as $\gamma$, and  joins $q_0$ to $\partial Q_h$, so it is a $JM$-minimizer.  But this  new path is not differentiable at $t=\tau$,
contradicting the fact that minimizers are solutions, hence smooth. 
We can now assume without loss of generality that $\gamma$ is all the time in 
the half-space of positive oriented shape.

If $q_0$ is the total collision we will now  show that $\gamma$ is the Lagrange
homothety solution. Write $\gamma(t)=(r(t),s(t))\in Q_h\subset \R_+\times S^2$. Denote by 
$s_L\in S^2$ the Lagrange shape in the Northern hemisphere. We want to show
$s(t) =  s_L$ for all $t$.   It is well known that the Lagrange shape is the unique minimum 
of the shape potential $V$.   Let $r_L$ be
the number such that ${1 \over r_L} V(s_L) = h$.  Suppose that $s(T) \ne s_L$.
Then $r(T) > r_L$ since $V(s(T)) = r(T) h > V(s_L) = r_L h$.  Since $r(0) = 0$ there will be a
smallest number   $t= \tau_*$  in the interval $[0,T]$
such that $r(t) = r_L$.  Then $r(t)< r_L$ for $t< \tau_*$.  The path $\gamma_L(t)=(r(t),s_L)$, $0\le t \le \tau_*$ lies in $Q_h$,  joins $q_0$ to $\partial Q_h$
and satisfies $U(\gamma_L (t)) \le U(\gamma(t)$ on this interval (with equality if and only if $s(t) = s_L$).
The  kinetic energy of this new path is pointwise  the same or less than that of the old path,  over their common domain.    Consequently 
$A_{JM}(\gamma_L)\le   A_{JM}(\gamma)$, with equality if and only if $s(t)=s_L$ for all $t\in [0,T]$ 
(and $\tau_* = T$)  as desired.   

Assume now $q_0$ is a double collision, say on the collision ray $r_{12}=0$.  We  prove that our minimizer  $\gamma$ is not collinear. 
Assume, for the sake of contradiction that $\gamma(t)$ is collinear at all time $t\in [0,T]$ . 
Since $\gamma$ is collision free except for $q_0$,
  $\gamma$ is contained in an angular sector of the syzygy plane between the collision   ray $r_{12}=0$ and one of the 
  two other collision rays.  Say it lies in the sector $C_1$  between $r_{12} = 0$ and   $r_{13}=0$.
Introduce  spherical coordinates $(\phi,\theta)$ on the shape sphere as in Lemma 
\ref{mutual-distances}. Then $\gamma(t)$ has coordinates $(r(t),\phi(t),\theta(t)) = (r(t), \phi(t) , 0)$,
where we have   arranged the   coordinates so that the sector within which $\gamma$ lies, sector $C_1$,
is given by   $\theta=0$.  To re-iterate,  at every moment $\gamma$'s spherical projection lies in the arc $C_1$
on the collinear equator. 
By Lemma \ref{mutual-distances} there exists 
$\delta\in (0,\pi)$ such that $V(\phi(t), \delta )< V(\phi(t),0)$ 
for all $t\in [0,T]$. 
Let $\tau_{\delta}$ be the first time in our interval such that   $V(\phi(t),\delta)/r(t)=h$. (There is such a time
by an argument similar to the one above when $q_0$ was triple collision.) Define the new path 
$\gamma_{\delta}(t)=(r(t),\phi(t),\delta)$, $t\in [0,\tau_\delta]$. Geometrically, $\gamma_\delta$ is obtained by rotating  
$\gamma(t)$ by an angle $\delta$ around the collision ray $r_{12}=0$ and keeping only that part of it which    stays inside $Q_h$. 
Such a rotation is an isometry for the metric (\ref{kinetic}), but strictly decreases $U$. 
Therefore $A_{JM}(\gamma_\delta)<A_{JM}(\gamma)$. But $\gamma$
is a minimizer, so we have  a contradiction. 

Using the  notations of Lemma   \ref{continuity-JM-dist} we have proved that 
if $q_0$ is a collision,  we have $d_{JM}(q_0,\partial Q_h)<d^c_{JM}(q_0,\partial Q_h)$.

Assume now that $\overline{q_0}\in C_h$ is a collision configuration.  We show the existence of a neighborhood 
${\mathcal O}(\overline{q_0})$ of $\overline{q_0}$ in $C_h$ such that for every $q_0\in {\mathcal O}(\overline{q_0})$, 
no $JM$-minimizer from $q_0$ to $\partial Q_h$ is  collinear. 
Indeed, let us assume, for the sake of contradiction, the existence of a sequence $(q_n)_{n\in \N}$ in $C_h$, converging to $\overline{q_0}$,
with corresponding collinear minimizers.  Then  
$d_{JM}(q_n,\partial Q_h)=d^c_{JM}(q_n,\partial Q_h)$ for all $n$. By the  triangle  inequality we have 
$$
\begin{array}{rl}
|d_{JM}(q_n,\partial Q_h)-d_{JM}(\overline{q_0},\partial Q_h)|&\le d_{JM}(q_n,\overline{q_0}),\\ 
|d^c_{JM}(q_n,\partial Q_h)-d^c_{JM}(\overline{q_0},\partial Q_h)|&\le d^c_{JM}(q_n,\overline{q_0}).
\end{array}
$$
Applying Lemma \ref{continuity-JM-dist} we get $d_{JM}(\overline{q_0},\partial Q_h)=d^c_{JM}(\overline{q_0},\partial Q_h)$, which  is a contradiction. 
The neighborhood $\mathcal U$ of  of the collision locus described in the Theorem can be   taken to be the union of the ${\mathcal O}(\overline{q_0})$ as $\overline{q_0}$ varies over the collision locus. This finishes the proof of Theorem \ref{thm:variational}.

{\bf Proof of Theorem \ref{thm:equal_masses}.}
In order to prove part (a) of the Theorem, assume $m_1=m_2$ and let $q_0$ be a configuration on the collision ray $r_{12}=0$. Let  $\gamma : [0,T]\rightarrow Q_h$ be a $JM$-minimizer from $q_0$ to $\partial Q_h$. If $q_0$ is the triple collision point, by 
Theorem \ref{thm:variational} we know that $\gamma$ is the Lagrange brake solution, which  is an isosceles brake orbit. 
If $q_0$ is a double collision point, then  express $\gamma$ in terms of the   spherical coordinates system $(r, \phi,\theta)$ on the shape space  $S^2$, 
centered at the double collision $b_{12}$ as in   Lemma  \ref{mutual-distances} and in the proof of 
Theorem \ref{thm:variational}.   We can assume that $\gamma(t)$ lies in the Northern hemisphere  on the upper half-space  $0 \le \theta\le \pi$.   Because   $m_1=m_2$, the isosceles curve $r_{13}=r_{23}$ in the Northern hemisphere
corresonds to the   great circle 
$\{ \theta=\pi/2\}$.  By 
Lemma \ref{mutual-distances}, for fixed $\phi$, the absolute minimum of $\theta\mapsto V(\phi,\theta)$ is achieved at 
the isosceles curve 
$\theta=\pi/2$.   Let  $0<\tau\le T$ be the first 
time such that  $V(\phi(t),\pi/2)/r(t)=h$. Then the  path $\tilde{\gamma}(t)=(r(t),\phi(t),\pi/2)$, $t\in [0,\tau]$, joins $q_0$ to 
$\partial Q_h$, has $U(\tilde \gamma (t)) \le U(\gamma(t))$ with equality if and only if $\tilde \gamma = \gamma$
on its domain, and has kinetic energy less than or equal to that of $\gamma$ on its domain. 
Thus  
$A_{JM}(\tilde{\gamma})\le A_{JM}(\gamma)$, with equality if and only if $\gamma(t)$ is isosceles at all times $t\in [0,T]$. 
Since $\gamma$ is a minimizer, it is necessarily isosceles. This proves part (a). 

To prove part (b), assume $r_{13}<r_{23}$ at the starting point $q_0$. The isosceles set $r_{13}=r_{23}$ is invariant by 
the flow. If we assume, for the sake of contradiction, that $\gamma : [0,T]\rightarrow Q_h$ 
intersects $r_{13}=r_{23}$ at some time $0<\tau \le T$, then it is necessarily transverse to $r_{13}=r_{23}$, otherwise 
$\gamma(t)$ would be isosceles at all times. The reflection with respect to the isosceles plane $r_{13}=r_{23}$ is a 
symmetry for the kinetc energy and for the potentiel $U$, therefore the path $\tilde{\gamma}$ obtained by keeping 
$\gamma\left|_{[0,\tau]}\right.$ as it is, and reflecting  $\gamma\left|_{[\tau,T]}\right.$ about the isosceles plane  $r_{13}=r_{23}$,
is still a $JM$-minimizer. But $\tilde{\gamma}$ is not differentiable at $t=\tau$. This is  a contradiction. 

The proof of part (c) is obtained applying (b) to the three isosceles plane $r_{ij}=r_{ik}$.  QED.

\vspace{2mm}  
{\bf Proof of Proposition \ref{seifert-nbd}: Seifert's Tubular Neighborhood Theorem.}

Our construction   is the same as Seifert's   \cite{Seifert}. The one real difference is that our
 Hill boundary $\partial Q_h$ is not compact, whereas his is compact.

We will construct  Seifert's neighborhood  in the non-reduced configuration space 
$\C^2$ and then project  it via  the quotient map $\pi$ of eq (\ref{eq:quotientmap}) to $Q_h$
to arrive at the reduced Seifert \nbhd.  
We  abuse notation by using the same symbol $Q_h$ for  the reduced and non-reduced Hill regions
  and  similarly using the same symbol 
   $S^{\epsilon}_h\subset Q_h$
for the reduced and non-reduced Seifert neighborhoods to be constructed.

The configuration space  $\C^2$ is endowed with the mass inner product $<\, ,\, >$ which is the
real part  of the Hermitian inner product of eq.  (\ref{eq_Hermitian}).
The equations there are  
\begin{equation} \label{equat-newt-non-reduced}
\ddot{q}=\nabla U (q),
\end{equation}
where the gradient of $U$ is calculated with respect to the mass inner product. 
The Jacobi-Maupertuis action of a curve $\gamma :[t_0,t_1]\rightarrow Q_h$ is defined by
$$
A_{JM}(\gamma)=\int_{t_0}^{t_1} \sqrt{K} \sqrt{2(U-h)} dt,
$$
where $K=\|\dot{\gamma}\|^2/2$.

As a first step  we construct a map $F:\partial Q_h \times [0,\overline{\delta}]\rightarrow Q_h$ which is   an 
analytic diffeomorphism onto  its image and is such that the curves  $t\mapsto F(x,t^2)$ are  the 
brake solutions starting from $x$ at $t=0$.
 Given $\alpha<h<\beta$,  define the open set
$$
D_{\alpha,\beta}=\{x\in \C^2, \quad \alpha<U(x)<\beta \}.
$$   
If $x\in D_{\alpha,\beta}$, then the  smallest of the  mutual distances $r_{ij}$
 of the triangle defined by  $x$ is bounded below by a constant depending only on $\beta$ 
and the masses. We denote by $q_x(t)$ the solution to  (\ref{equat-newt-non-reduced}) with initial conditions 
$q_x(0)=x$ and $\dot{q}_x(0)=0$. (Note: these solutions need not have energy $h$.) Choose positive  numbers  $a$ and $b$ such that $a<\alpha<h<\beta<b$. 
By classical results on differential equations, 
there exists $T>0$ such that for every $x\in D_{\alpha,\beta}$, the solution $q_x(t)$ is well defined for $t\in [-T,T]$ and 
satisfies $q_x(t)\in D_{a,b}$. The map $(x,t)\mapsto q_x(t)$ is analytic,  even in $t$ (i.e. $q_x(-t)=q_x(t)$), 
and its derivatives up to the second order are uniformly bounded on $D_{\alpha,\beta}\times [-T,T]$. By equations 
of motion (\ref{equat-newt-non-reduced})
$$
q_x(t)=x+\frac{\nabla U(x)}{2} t^2 +{\mathcal O}(t^4).
$$       
Let us set $T_1=\sqrt{T}$. Since $q_x(t)$ is even in $t$, the map $F(x,\tau)=q_x(\sqrt{\tau})$, 
$(x,\tau)\in D_{\alpha,\beta}\times [0,T_1]$ is still analytic, so it can be extended to negative values of $\tau$, 
giving an analytic map 
$F:D_{\alpha,\beta}\times [-T_1,T_1]\rightarrow D_{a,b}$ satisfying
\begin{equation} \label{ciao}
F(x,\tau)=x+\frac{\nabla U(x)}{2}\tau +f(x,\tau),
\end{equation}
where the $f(x,\tau)=\mathcal O(\tau^2)$, uniformly for $x\in D_{\alpha,\beta}$. 

Define  
$$
G: D_{\alpha,\beta}\times [-T_1,T_1]\rightarrow D_{a,b}\times \R,\qquad G(x,\tau)=(F(x,\tau),U(x)).
$$
By (\ref{ciao}), the differential of $G$ at a point $(x,\tau=0)$ gives
$$
DG_{(x,0)}(\delta x, \delta \tau)=(\delta x+\frac{\nabla U(x)}{2} \delta \tau, <\nabla U(x),\delta x>).
$$
An easy computation shows that $DG_{(x,0)}$ is invertible and $\|(DG_{(x,0)})^{-1}\|$ is bounded as $x\in D_{\alpha,\beta}$. 
Moreover, since all derivatives of $G$ up to the second order are uniformly bounded on 
$D_{\alpha,\beta}\times [-T_1,T_1]$, we can apply a strong version of the inverse function theorem and find a
$\overline{\delta}>0$ such that for all $x\in D_{\alpha,\beta}$, the map $G$ defines a diffeomorphism from 
$B(x,\overline{\delta})\times [-\overline{\delta},\overline{\delta}]$ into its image, where $B(x,\overline{\delta})$ denotes 
the closed ball centered in $x$ with radius $\overline{\delta}$.   If we take the restriction to $U(x)=h$ and define
$B_{\partial Q_h}(x,\overline{\delta})=B(x,\overline{\delta})\cap \partial Q_h$ we find that $F$ defines an analytic 
diffeomorphism from $B_{\partial Q_h}(x,\overline{\delta})\times [-\overline{\delta},\overline{\delta}]$ into its image.
Let us prove now that by decreasing sufficiently $\overline{\delta}$, the restriction of $F$ to 
$\partial Q_h\times [-\overline{\delta},\overline{\delta}]$ is an analytic 
diffeomorphism onto its image. Indeed, we have proven that at every point it is a local diffeomorphism.
Assume, for the sake of contradiction, there exist two sequence $(x_n,\tau_n)_{n\in \N}$ and 
$(x^\prime_n,\tau^\prime_n)_{n\in \N}$ satisfying $(x_n,\tau_n)\neq (x^\prime_n,\tau^\prime_n)$   
such that $\tau_n\rightarrow 0$, $\tau^\prime_n \rightarrow 0$ as 
$n\rightarrow +\infty$ and $F(x_n,\tau_n)=F(x^\prime_n,\tau^\prime_n)$ for all $n\in \N$.  By (\ref{ciao}) 
and uniform 
boundedness of $\nabla U$ on $\partial Q_h$, we have $\|x_n-x^\prime_n\|\rightarrow 0$ as $n\rightarrow +\infty$, 
therefore 
$x^\prime_n\in  B_{\partial Q_h}(x_n,\overline{\delta})$ if $n$ is sufficiently great. 
This contradicts the fact that $F$ is a
diffeomorphism on every $B_{\partial Q_h}(x,\overline{\delta})\times [-\overline{\delta},\overline{\delta}]$. 
\\ \noindent

The second step consists in defining a diffeomorphism $\Phi$ on $\partial Q_h \times [0,\overline{\delta}]$
 of the form
$\Phi(x, y) = F(x, \beta(x,y))$  where the scalar  function $\beta$ allows us to 
``straighten out'' the JM action.  
Let $S(x,\tau)$ denotes the $JM$-length of the extremal $F(x,s)_{s\in [0,\tau]}$.  From  (\ref{ciao}) we compute  
\begin{equation} \label{S(x,tau)}
S(x,\tau)=\tau^{3/2} \left(\frac{1}{3}\|\nabla U(x)\|^2+g(x,\tau)\right),
\end{equation}
where $g(x,\tau)$ is analytic, and $g(x,\tau)={\mathcal O}(\tau)$, uniformly on $x\in \partial Q_h$. The map 
$H(x,\tau)=(x,S(x,\tau)^{2/3})$ is analytic in $\partial Q_h\times [0,\overline{\delta}]$, and we can find 
$\epsilon>0$ such that $H$ maps  a neighborhood of $\partial Q_h\times \{0\}$ 
(in $\partial Q_h\times [0,\overline{\delta}]$) diffeomorphically into $\partial Q_h\times [0,\epsilon]$.  
Then $H^{-1}$ has the form $(x,y) \mapsto (x, \beta(x,y))$.  We set  
$\Phi=F\circ H^{-1}$ where the domain of $H^{-1}$ is $\partial Q_h\times [0,\epsilon]$. It is clear now that for 
every $x\in \partial Q_h$, and for every $\delta\in (0,\epsilon)$, the curve $(\Phi(x,y))_{y\in [0,\delta]}$ is a 
segment of brake orbit and its $JM$-action is $\delta^{3/2}$.

The Jacobi metric $g_h$ in $Q_h \subset \C^2$ is given by 
$$
g_h (x) (v,v)=2(U(x)-h)\langle v, v \rangle  
$$ 
We now  show that brake solutions $(\Phi(x,y))_{y\in [0,\epsilon]}$ are $g_h$-orthogonal to hypersurfaces 
$\Phi(\partial Q_h\times\{y\})$.   The argument
is the standard one used to prove  the Gauss lemma in \Ri geometry.  
We are to show that $g_h(\partial_y \Phi(x,y),\partial_{x} \Phi(x,y)\xi)=0$ for all $\xi$ tangent to the hypersurface.
For this purpose, set $f(y;x) = g_h(\partial_y \Phi(x,y),\partial_{x} \Phi(x,y)\xi)$ for fixed $\xi$. 
Because the lengths of the curves $y \to \Phi(x,y)$ up to the value $y_0$
are all  $y_0 ^{3/2}$ we have that $g_h(\partial_y \Phi(x,y),\partial_{y} \Phi(x,y)) = \frac{9}{4} y$,
independent of $x$. Differentiating this identity in the $\xi$-direction 
 tangent to the hypersurface and commuting derivatives
  we get $g_h(\partial_y \Phi(x,y),\nabla_{y} \partial_{x} \Phi(x,y)\xi) = 0$ where $\nabla$ denotes the $g_h$-Levi-Civita connection.   Now, because the $y$-curves are reparameterized geodesics with lengths only depending   on $y$
we have that $\nabla_{y}\partial_{y}\Phi(x,y) = \mu(y)\partial_{y}\Phi(x,y)$ for some function $\mu(y)$.
Differentiate $f(y;x)$  with   respect to $y$,  and use  metric compatibility and the previous equations to derive the  linear differential equation $\partial_y f = \mu(y) f(y;x)$ with $x$
as a parameter. But  $f(0;x) = 0$ so the initial condition of this differential equation is zero,  from which it follows that $f$ is identically $0$. (Indeed $\mu$ is the second derivative of $y^{3/2}$ with respect to $y$, which is singular at
$y = 0$, but the
analysis goes through.) 

An arbitrary  tangent vector $v\in T_q Q_h$ at $q = \Phi(x,y)$
can  be written  
$$v=\partial_x\Phi(x,y)\xi+\partial_y \Phi(x,y)\lambda,$$ where 
$(\xi,\lambda)\in T_x\partial Q_h\times \R$. By the previous  orthogonality discussion and 
the fact that $y^{3/2}$  is the arclength of 
$(\Phi(x,y))_{y\in [0,\epsilon]}$ we have for such $v$:
\begin{equation} \label{epress-Jacobi}
g_h(v,v)=(U-h)\kappa_h(x,y)(\xi,\xi)+\frac{9 y}{4} \lambda^2. 
\end{equation} 
where $\kappa_h(x,y)$ is a positive definite quadratic form on $T_x\partial Q_h$. 
If $\gamma : [t_0,t_1]\rightarrow Q_h$ is any rectifiable curve joining a point $q=\Phi(x,\delta)$ (with $0<\delta<\epsilon$) 
to $\partial Q_h$,  and if $I$ is the set of times $t\in [t_0,t_1]$ such that $\gamma(t)\in S^{\epsilon}_h=\Phi(\partial Q_h,[0,\epsilon])$, by (\ref{epress-Jacobi}) we have 
\begin{equation} \label{AJM-delta}
A_{JM}(\gamma)=\int_{t_0}^{t_1} \sqrt{g_h(\dot{\gamma}(t),\dot{\gamma}(t))} dt \ge \int_{I} \frac{3\sqrt{y(t)}}{2}|\dot{y}(t)| dt \ge \delta^{3/2},
\end{equation}
where we term $(x(t),y(t))=\Phi^{-1}(\gamma(t))$ for $t\in I$. Moreover, we have equality in (\ref{AJM-delta}) 
if and only if, up to an arc contained in $\partial Q_h$, the curve $\gamma$ is a reparametrization of 
$(\Phi(x,y))_{y\in [0,\delta]}$. The Euclidean length of $(\Phi(x,y))_{y\in [0,\delta]}$ is given by the 
integral $\int_0^{\delta} \left\|\partial_y \Phi (x,y) dy \right\| dy$. By (\ref{ciao}), (\ref{S(x,tau)}) and by definition of $\Phi$ we have 
\begin{equation} \label{Phi-asympt}
\Phi(x,y)=x+\frac{\nabla U(x)}{2\cdot 3^{2/3} \|\nabla U(x)\|^{4/3}}y+{\mathcal O}(y^2).
\end{equation}
Since $\nabla U(x)$ is uniformly bounded on $\partial Q_h$, there exists a strictly positive constant $M$, 
independent of $x$ and $\delta$ as long as $\delta < \epsilon$, such that 
$$
\int_0^{\delta} \left\|\partial_y \Phi (x,y) dy \right\| dy\le M\delta. 
$$
Let us show now there exists $\alpha>0$ such that for every $\delta\in (0,\epsilon)$ we have $U\ge h+\alpha \delta$ on 
$Q_h\setminus S_h^\delta$. By (\ref{Phi-asympt}) we have 
\begin{equation} \label{U(Phi)}
U(\Phi(x,y))=h+\frac{\|\nabla U(x)\|^{2/3}}{2\cdot 3^{2/3}}y+{\mathcal O}(y^2),
\end{equation}
for $(x,y)\in \partial Q_h\times [0,\epsilon]$. Since $\|\nabla U(x)\|$ is bounded and uniformly bounded below below 
by a positive constant, we can find two constant $0<\alpha<\beta$ such that
$$
h+\alpha y\le U(\Phi(x,y))\le h+\beta y
$$  
for every $(x,y)\in \partial Q_h\times [0,\epsilon]$. Assume now, for the sake of contradiction, the exisence of a point 
$q\in Q_h\setminus S_h^\delta$ such that $U(q)<h+\alpha \delta$. Since level set of $U$ are connected by arc, 
there would exist a point $q^\prime$ on $\Phi(\partial Q_h,\delta)$ such that $U(q^\prime)< h+\alpha \delta$, 
and this is in contradiction with (\ref{U(Phi)}).      
%
%
%
%
%
%
%
\section{Periodic Brake Orbits}\label{SecPeriodicBrake}
The goal of this section is to prove Theorem~\ref{theorem_isosbrake} about  the existence of simple, periodic brake orbits for the isosceles three-body problem.  Other examples of periodic, isosceles brake orbits, more complicated than the one described here, are given in  \cite{SimoMartinez}.

Let $p$ be a brake initial condition other than the Lagrange homothetic one, $p_0$.  According the results of section~\ref{SecExistSyzygy}, $p$ can be followed forward to meet the syzygy submanifold, $\tilde C_h$, i.e., the submanifold of the energy manifold with collinear shapes.  There is a natural reflection symmetry of the energy manifold through $\tilde C_h$, obtained by reflecting the shape variables $(x,y)$ and their velocities $(x',y')$ while leaving the size variables $(r,v)$ unchanged.  Call this reflection map $R$.  Then $R$ is a symmetry of the differential equation if one also reverses time.  If the orbit, $\g(t)$, of $p$ meets  $\tilde C_h$ orthogonally after time $T_1$, then reflecting the orbit segment and reversing time gives the continuation of $\g$ to the time interval $[T_1,2T_1]$ and $\g(2T)=R(p)$, the reflection of $p$.  Now it follows from symmetry during the interval $[2T_1,4T_1]$ the orbit retraces its path, returning to $p$.  So $p$ determines a periodic brake orbit of period $4T$.

If $\g(t)$ does not meet  $\tilde C_h$ orthogonally and if $\g(T)$ is not in the local stable manifold of the Lagrange triple collision, then it can be followed to a {\em second syzygy}, say at time $T_2>T_1$.  If this crossing is orthogonal one obtains a periodic brake orbit of period $4T_2$ by reflection.

We will see that such a second-syzygy periodic brake orbit exists in the isosceles subsystem of the three-body problem, at least for certain choices of the mass parameters.  It is not known whether a first-syzygy brake orbit exists.  Numerical experiments  suggest that no such orbits exist in the equal mass case.

\subsection{The Isosceles Three-Body Problem}
Assume that two masses are equal, say $m_1=m_2=1$.  Then there is an invariant submanifold of the three body problem such that the shape remains an isosceles triangle with $m_3$ on the symmetry axis for all time.  Up to rotation, this isosceles subsystem is obtained by making the  Jacobi variables $\xi_1, \dot \xi_1$ real and $\xi_2, \dot \xi_2$ imaginary.

As in section~\ref{SecEq} separate size and shape variables will be used.  In addition, double collisions will have to be explicitly regularized.  A convenient way to do this is to define an angular variable which gives a multiple cover of the isosceles shape space and which is locally a branched double cover near the binary collisions, as in the familiar Levi-Civita regularization.   Using the projective Jacobi variables $[\xi_1,\xi_2]$, this parametrization of the isosceles shapes can be accomplished by setting
$$\xi_1 = \fr{1}{\sqrt{\mu_1}}\cos^2(\t)   \qquad   \xi_2 = \fr{i}{\sqrt{\mu_2}}2\sin(\t).$$
The isosceles binary collision shape corresponds to $\xi_1 = 0$ or $\t = \pm \smfr{\pi}{2} \bmod 2\pi$.

Note that with the assumptions about the masses, one has
$$\mu_1= \fr{1}{2} \qquad \mu_2 = \fr{2m_3}{2+m_3}.$$

Substitution gives a reduced Lagrangian
$$L_{red}(r,\dot r,\t,\dot\t) = K_0 + \fr{1}{r}V(\t)$$
where 
$$K_0 = \fr{\dot r^2}{2} + \fr{2 r^2\cos^2\t\, \dot\t^2}{(1+\sin^2\t)^2}$$
and
$$V(\t) = (1+\sin^2\t)\left(\fr{1}{\sqrt{2}\cos^2\t} + \fr{2\sqrt{2}\,m_3}{\sqrt{ (1+\sin^2\t)^2 + \smfr{8}{m_3}\sin^2\t  }} \right).$$

Introducing a change of time scale ${}' = r^{\smfr32}\cos^2\t\;\dot{}$ leads to the following system of differential equations:
\begin{equation}\label{eq_isosceles}
\begin{aligned}
r' &= vr\cos^2\t\\
v' &= \smfr12 v^2 \cos^2\t  + \smfr14 w^2(1+\sin^2\t)^2-W(\t)\\
\t'  &= \smfr14 w(1+\sin^2\t)^2 \\
w'  &= W'(\t)-\smfr12 vw\cos^2\t +\sin\t\cos\t\left(2r+v^2-\smfr12w^2(1+\sin^2\t)\right)
\end{aligned}
\end{equation}
where $W(\t) = \cos^2\t \,V(\t)$.
The energy conservation equation is
\begin{equation}\label{eq_energyisosceles}
\smfr12 v^2\cos^2\t +  \smfr18 w^2(1+\sin^2\t)^2-W(\t) = -r\cos^2\t
\end{equation}
where the energy has been fixed at $-1$.  

Since $W(\t)$ is an analytic function,  the binary collisions have been regularized.  Moreover the triple collision singularity has been blown up into an invariant manifold at $\{r=0\}$ as before.  The isosceles Hill's region  for energy $-1$ is 
$Q_1 = \{(r,\t): 0 \le r \le V(\t) \}$.  This is shown in figure~\ref{fig_isosHill} for the equal mass case $m_3=1$.  

Using these coordinates, syzygies occur at the Euler shape ($\t = 0 \bmod 2\pi$) and at binary collision 
($\t = \pm \smfr{\pi}{2} \bmod 2\pi$).  These are indicated by the bold vertical lines in figure~\ref{fig_isosHill}.
The reflection through syzygy amounts to reflecting the position variables $(r,\t)$ through these vertical lines 
while taking the velocity $(v,w)$ to $(v,-w)$ and reversing time.  To get a symmetric periodic orbit, one needs to reach syzygy with $v=0$.  Since $r' = vr\cos^2\t$ this is equivalent to orthogonality at the Euler vertical lines $\t = 0 \bmod 2\pi$, but every orbit crosses the lines $\t = \pm \smfr{\pi}{2} \bmod 2\pi$ orthogonally.  A numerically computed periodic brake orbit is shown in the figure.  It begins on the zero velocity curve, crosses the syzygy line at $\t = -\smfr{\pi}{2}$ with $v<0$ (not apparent from the figure), then continues to its second syzygy at $\t=0$ where it crosses orthogonally with $v=0$.  The rest of the orbit is obtained by symmetry.  The goal of this section is to prove the existence of such an orbit for certain choices of the parameter $m_3$.  
\begin{figure}
\scalebox{.6}{\includegraphics{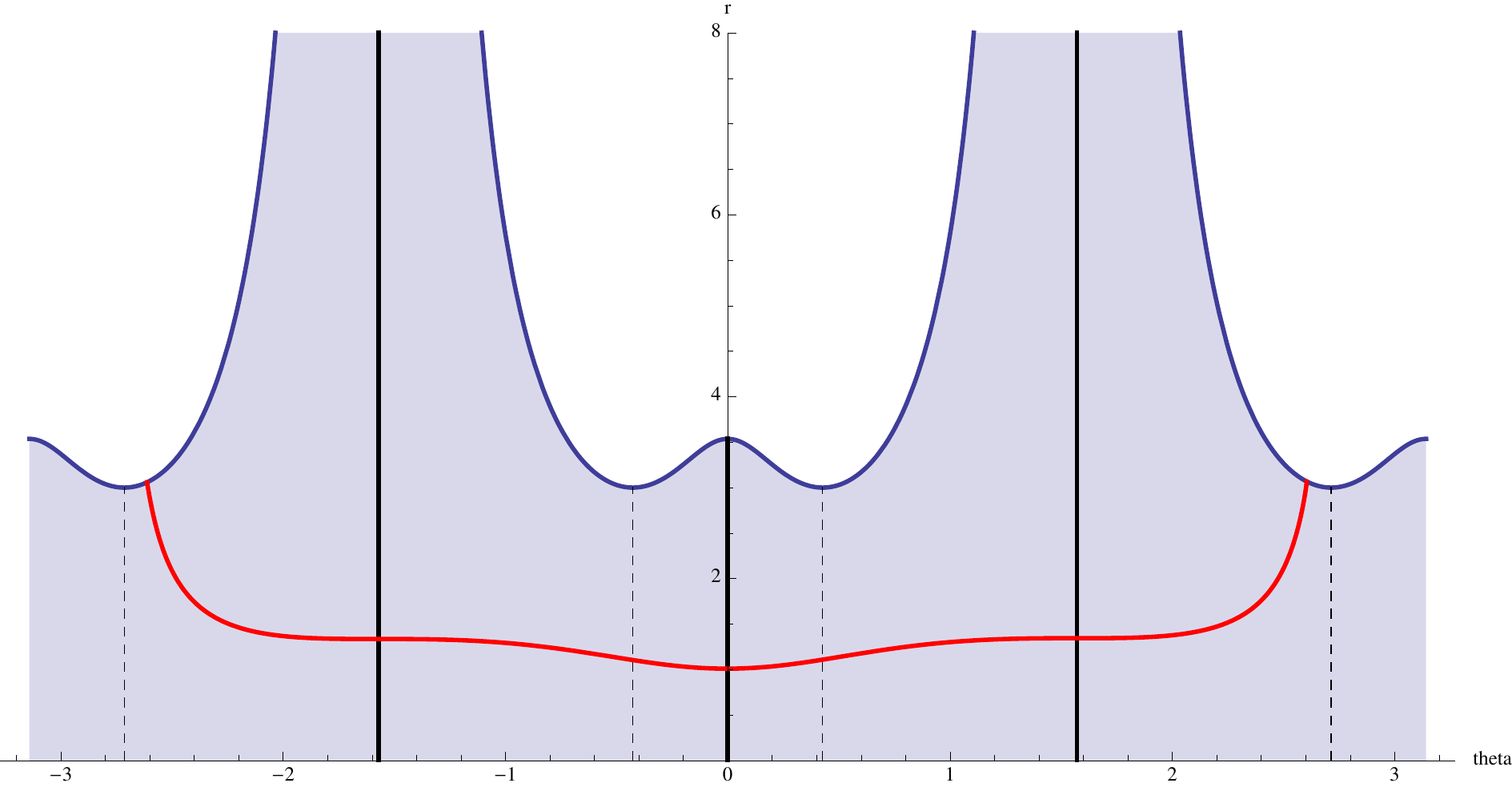}}
\caption{Isosceles Hill's region for the equal mass three-body problem using coordinates $(\t,r)$.  The zero velocity curve is the top boundary curve.  The syzygy configurations are represented by the thick vertical lines ($\t=0$ is the collinear central configuration and $\t=\pm\smfr{\pi}{2}$ are binary collision shapes). A numerically computed periodic brake orbit is also shown. }\label{fig_isosHill}
\end{figure}

The idea of the proof can be described briefly as follows. Consider a curve, $Z$, of brake initial conditions whose shapes vary from equilateral to binary collision.   In figure~\ref{fig_isosHill}, the equilateral shapes (minima of the shape potential) are shown with dashed vertical lines.    Let $\t^*$ be the equiliateral shape in the interval $[0,\smfr{\pi}{2}]$.  Other equilateral shape occur at the points $\pm \t^* +k\pi$ where $k$ is an integer.  The curve $Z$ will consist of the brake initial conditions with $\t\in[\t^*-\pi,-\smfr{\pi}{2}]$ (in figure~\ref{fig_isosHill}, this is the part of the top boundary curve between the left-most dashed and left-most bold vertical lines).  It will be shown that as the initial condition $p\in Z$ varies, there is at least one point which can be followed to meet the vertical line $\t=0$ orthogonally.  

The proof will use a geometrical argument in the three-dimensional energy manifold, $P_{1}=\{(r,\t,v,w):r\ge 0, H = -1\}$. This manifold can be visualized through its projection to $(r,v,\t)$-space, which is given by the inequality $r +\smfr12 v^2\le V(\t)$.
This projection is shown in figure~\ref{fig_energymanifold}.   On the top surface of the projection, $w=0$.  The full energy manifold can be viewed as two copies of this projection (one with $w\ge 0$ and one with $w\le 0$) glued together along this top surface.  The desired orbit has the property that the shape angle $\t(t)$ will increase monotonically from $\t(0)$ to $\t(T_2) = 0$ where $T_2$ denotes the second-syzygy time described above.  Therefore it suffices to consider the part of the energy manifold with $\t' =\smfr 14 w (1+\sin^2\t)^2 \ge 0$.  On this half of the energy manifold, one can solve (\ref{eq_energyisosceles}) uniquely for $w(r,v,\t)$.  

\begin{figure}
\scalebox{1.0}{\includegraphics{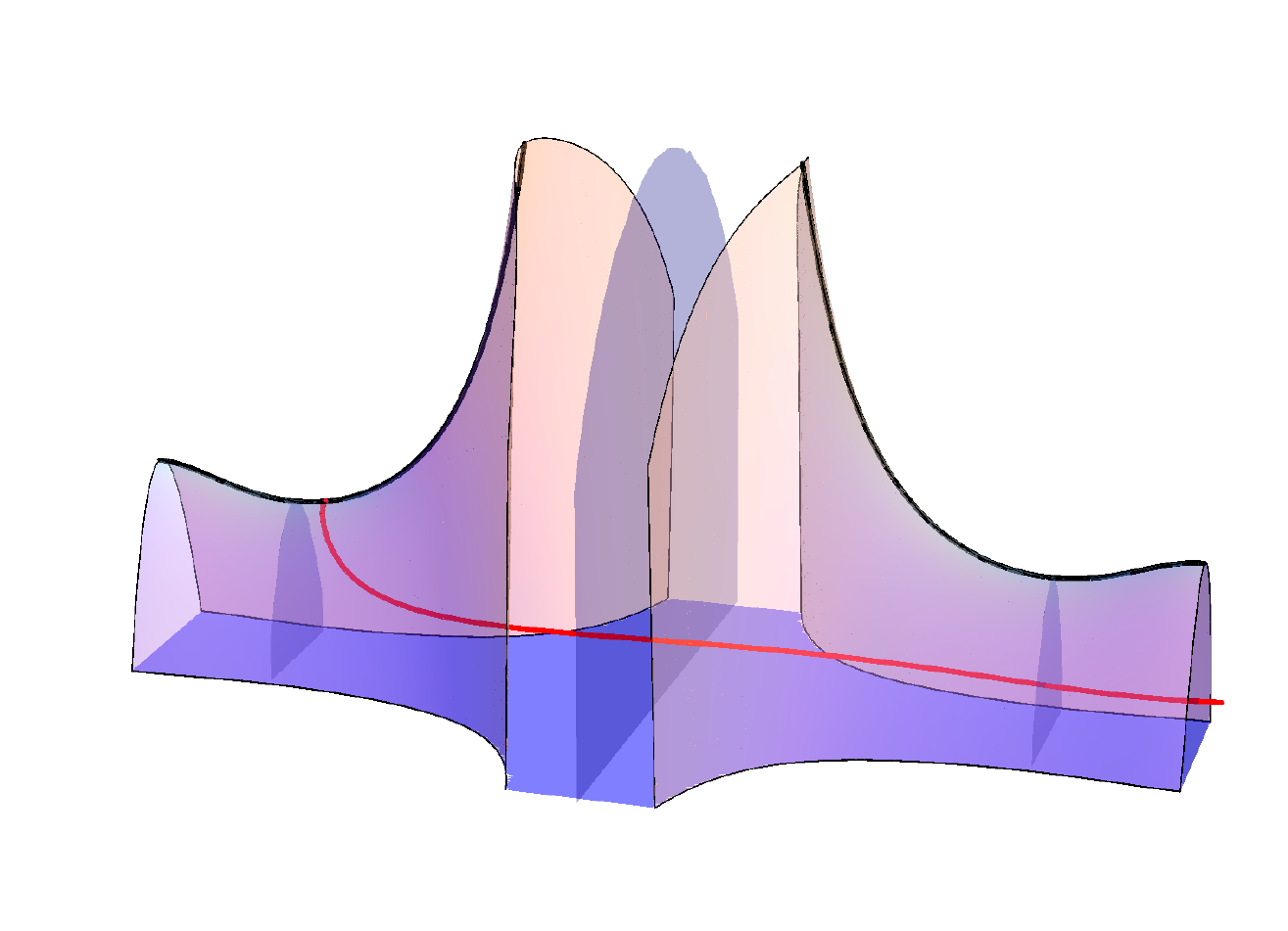}}
\caption{Projection of the $w\ge 0$ part of the energy manifold in coordinates $\t$ (width), $v$ (depth), $r$ (height),  The top surface is $\{w=0\}$; the floor is part of the collision manifold $\{r=0\}$.  A numerically computed periodic brake orbit is also shown, passing through the walls of the regions $R_I, R_{II}, R_{III}$ as described in the proof.}\label{fig_energymanifold}
\end{figure}

The relevant part of the energy manifold will be divided into three regions:
$$\begin{aligned}
R_I &= P_1\intersection\{\t\in[\t^*-\pi,-\smfr{\pi}{2}],w\ge 0\}\\
R_{II}&= P_1\intersection\{\t\in[-\smfr{\pi}{2},-\t^*],w\ge 0\}\\
R_{III}&= P_1\intersection\{\t\in[-\t^*,0],w\ge 0\}.
\end{aligned}
$$
The planes $\t =\t^*-\pi$ and $\t = -\smfr{\pi}{2}$ will be called the {\em left and right walls} of $R_{I}$ respectively with similar definitions for the other regions (the walls are the four vertical planes in figure~\ref{fig_energymanifold}).

These regions contain certain restpoints on the collision manifold which will now be described.  Let  $L_{\pm}$ be the Lagrange restpoints in $R_{I}$ at $(r,\t,v,w) = (0,\t^*-\pi,\pm v^*,0)$ where  $v^* = \sqrt{2V(\t^*)}$.  These are connected by the Lagrange homothetic orbit which is the curve of intersection of the left wall of $R_{I}$ with the boundary surface $\{w=0\}$.  Similarly,  there are Lagrange restpoints  $L'_{\pm}$ at $(r,\t,v,w) = (0,-\t^*,\pm v^*,0)$ and a corresponding homothetic orbit in the left wall of $R_{III}$ which is also the right wall of $R_{II}$.  Finally there are Eulerian restpoints $E_{\pm}$ at $(r,\t,v,w) = (0,0,\pm \sqrt{2V(0)},0)$ and an Eulerian homothetic orbit in the right wall of $R_{III}$.

Let $Z$ be the part of the zero velocity curve in $R_I$.   The proof will follow a subset of $Z$ forward under the flow through these three regions to obtain a curve in the right wall of $R_{III}$, i.e., in the syzygy set at $\t=0$.  It will be shown that this final curve crosses the plane $v=0$ and the point of crossing will determine the required periodic brake orbit.  The next lemma shows how the flow can be used to carry orbits across the various regions.

\begin{lemma}\label{lemma_regioncrossing}
Regions $R_{I}$ and $R_{III}$ are positively invariant sets for the flow while $R_{II}$ is negatively invariant. With the exception of the Lagrange homothetic orbits, orbits cross these regions as follows: any orbit beginning in the left wall of region $R_{I}$,  crosses the region and exits at the right wall.  The same hold for $R_{III}$ except for orbits in the stable manifold of $E_{+}$ (which is contained in the triple collision manifold $\{r=0\}$).

Similarly, except for the Lagrange orbit, any backward-time orbit beginning in the right wall of $R_{II}$ can be followed back to the  left wall.  Finally, forward orbits beginning in the left wall of $R_{II}$ either leave $R_{II}$ through the right wall, leave $R_{II}$ through the top surface $\{w=0\}$ or converge to one of the Lagrange restpoints $L'_{\pm}$ in the right wall as $s\into\infty$.
\end{lemma}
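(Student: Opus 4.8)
The plan is to base the whole lemma on two elementary facts about the half energy manifold $\{w\ge 0\}$: the monotonicity of the shape angle $\theta$, and a sign computation on the top surface $\{w=0\}$. First, the third equation of (\ref{eq_isosceles}) reads $\theta' = \smfr14 w(1+\sin^2\theta)^2$, so on $\{w\ge 0\}$ we have $\theta'\ge 0$, with strict inequality whenever $w>0$. Thus $\theta$ is non-decreasing along any forward orbit that stays in $\{w\ge 0\}$, strictly increasing off the top surface. Already this shows that in forward time the left wall of each region can only be an entrance and the right wall is the only candidate vertical exit. Second, I would compute $w'$ on the top surface. By (\ref{eq_energyisosceles}) the surface $\{w=0\}$ is exactly $\{2r+v^2=2V(\theta)\}$ away from binary collision, and writing $W'=-2\sin\theta\cos\theta\,V+\cos^2\theta\,V'$ in the fourth equation of (\ref{eq_isosceles}) and setting $w=0$ gives $w'=\cos^2\theta\,V'+\sin\theta\cos\theta(2r+v^2-2V)$. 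On $\{w=0\}$ the last parenthesis vanishes, leaving the identity $w'|_{w=0}=\cos^2\theta\,V'(\theta)$, which drives everything.

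Next I would read off the sign of $V'$ from the qualitative shape of $V(\theta)$: it is even and $\pi$-periodic, blows up at the binary collisions $\theta=\pm\smfr\pi2\bmod\pi$, attains its minima at the equilateral points $\pm\theta^*+k\pi$, and has local maxima at the Euler points $k\pi$. Hence $V'>0$ on the interiors of the $\theta$-ranges of $R_I$ and $R_{III}$, while $V'<0$ on the interior of the range of $R_{II}$. Combining with the identity: on the top surface of $R_I$ and $R_{III}$ one has $w'=\cos^2\theta\,V'>0$, so the field points into $\{w>0\}$ and the top is not a forward exit; together with $\theta'\ge 0$ this yields positive invariance, the right wall being the only forward exit. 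For $R_{II}$ instead $w'<0$ on the top surface, so the reversed flow satisfies $-w'>0$ there and $-\theta'\le 0$, making $R_{II}$ positively invariant for the reversed flow, i.e.\ negatively invariant.

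The crossing statements I would obtain by an $\omega$-limit argument. In $R_I$ an orbit from the left wall has $\theta$ strictly increasing and bounded by $-\smfr\pi2$, so $\theta\to\theta_\infty\le-\smfr\pi2$; if $\theta_\infty$ were interior the $\omega$-limit set would lie in $\{w=0,\ \theta=\theta_\infty\}$, which is impossible because $w'>0$ there and there are no restpoints in the open interval (the restpoints of (\ref{eq_isosceles}) on $\{r=0\}$ occur only at equilateral and Euler shapes, i.e.\ on the walls). Thus $\theta\to-\smfr\pi2$ and the orbit exits the right wall; the Lagrange homothetic orbit, which is the whole curve $\{\theta=\theta^*-\pi,\ w=0\}$, is the lone exception. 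The identical argument applies to $R_{III}$, whose right wall $\theta=0$ is the Euler shape where $V'(0)=0$ and $E_\pm$ sit: an orbit with $\theta\to 0$ either crosses in finite rescaled time or has $\omega$-limit in $\{E_+,E_-\}$, and by proposition \ref{prop_linearization} one has $W^s(E_+)\subset\{r=0\}$, so every orbit with $r>0$ crosses and the only exceptions are the $\{r=0\}$ orbits asymptotic to $E_+$. For $R_{II}$ the top surface is now a genuine forward exit ($w'<0$), producing alternative (b); an orbit that avoids the top has $\theta$ increasing to $-\theta^*$ and then either crosses the right wall (a) or converges to a restpoint there, which can only be $L'_\pm$ (c). Finally, applying the same monotonicity-plus-$\omega$-limit reasoning to the reversed flow (for which the top of $R_{II}$ is an entrance) gives the backward-crossing statement from the right wall of $R_{II}$ to its left wall, with the Lagrange homothetic orbit at $\theta=-\theta^*$ the only exception.

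The main obstacle is entirely at the degenerate endpoints. At each wall $V'$ either vanishes (equilateral and Euler walls) or blows up (binary-collision walls), so the clean ``$w'$ has a fixed sign'' dichotomy breaks down precisely where an orbit is trying to exit, and the decision between ``crosses the wall'' and ``converges to a restpoint'' cannot be made from the sign of $w'$ alone. Resolving this requires the hyperbolic structure and stable/unstable dimension count of proposition \ref{prop_linearization}; the $\theta=0$ Euler wall, where the $W^s(E_+)\subset\{r=0\}$ exception appears, is the delicate case, and the binary-collision walls must be handled after the Levi--Civita regularization so that $\theta=\pm\smfr\pi2$ is a smooth crossing rather than a singularity.
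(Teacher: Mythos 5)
Your setup---monotonicity of $\theta$ from $\theta'=\tfrac14 w(1+\sin^2\theta)^2\ge 0$, the identity $w'|_{w=0}=\cos^2\theta\,V'(\theta)$ derived from (\ref{eq_isosceles}) and the energy relation (\ref{eq_energyisosceles}), and the sign of $V'$ on the three $\theta$-intervals---is exactly the paper's argument for the invariance statements, and your $\omega$-limit scheme for the crossing statements also follows the paper. But there is a genuine gap at the step ``thus $\theta\to-\tfrac{\pi}{2}$ and the orbit exits the right wall'' for $R_I$ (and in the mirror-image backward statement for $R_{II}$). Near the binary-collision walls $\theta=\pm\tfrac{\pi}{2}$ the energy manifold is \emph{unbounded}: the Hill region is $0\le r\le V(\theta)$ and $V(\theta)\to\infty$ there, so $r$ and $v$ admit no a priori bound. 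Your $\omega$-limit argument correctly excludes an interior limit $\theta_\infty$ (where the slab $\{\theta\le\theta_\infty\}$ of the energy manifold is compact, so the limit set is nonempty), but once $\theta_\infty=-\tfrac{\pi}{2}$ you must still exclude the possibility that the orbit escapes to infinity, $2r+v^2\to\infty$, approaching the wall only asymptotically with empty $\omega$-limit set. (For orbits that stay bounded the conclusion does follow: the energy relation at $\theta=-\tfrac{\pi}{2}$ forces $w^2=2W(-\tfrac{\pi}{2})>0$, so the wall contains no invariant set, a nonempty limit set is impossible, and the orbit must cross in finite rescaled time.)

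Ruling out that escape is exactly where the paper puts its technical effort: it sets $\lambda=\sqrt{2r+v^2}$, computes $\lambda\lambda'=\tfrac18 vw^2(1+\sin^2\theta)^2$ from (\ref{eq_isosceles}) and the energy relation, reparametrizes by $\theta$ (legitimate since $\theta'>0$ off the top surface), and uses $|v|\le\lambda$ to obtain the bound $|d\lambda/d\theta|\le\tfrac12 w$ of (\ref{eq_lambdaestimate}); since $W=\cos^2\theta\,V$ is analytic and hence bounded on the $\theta$-interval of $R_I$ (this is what the Levi--Civita regularization buys), $w$ is bounded by the energy relation, so $\lambda$ remains bounded over the finite $\theta$-interval and escape to infinity is impossible. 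This estimate is not an optional refinement: the paper reuses it in the proof of Lemma \ref{lemma_regionI}, and without it (or some substitute) your proof of the $R_I$ and backward $R_{II}$ crossings does not close. Two smaller points: in $R_{III}$ you must also exclude convergence to $E_-$, which the paper does by noting that $W^s(E_-)$ consists only of the restpoint and the Euler homothetic orbit (which lies in the right wall, hence is never reached from the left wall); and the hyperbolicity/dimension facts you invoke should be the isosceles analogues stated in Section \ref{SecPeriodicBrake}, not Proposition \ref{prop_linearization} itself, which concerns the planar problem.
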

\begin{proof}
By definition, an orbit in any of the three regions satisfies $\t'\ge 0$ so $\t(s)$ is non-decreasing.   Referring to figure~\ref{fig_energymanifold}, the lower boundary surface $\{r=0\}$ is invariant.  On the upper boundary surface, $w=0$  and
$$w'=  W'(\t)+\sin\t\cos\t\left(2r+v^2\right) = W'(t)+2\sin\t\cos\t W(\t) = \cos^2\t\, V'(\t).$$
Now $R_{I}, R_{III}$ are are defined by $\t$-intervals where $V'(\t)\ge 0$ so for these regions, $w'\ge 0$ on the top boundary.  This proves positive invariance and also that it is only possible to leave through the right wall.  Similarly, in $R_{II}$ we have $w'\le 0$ on the top wall so the region is negatively invariant and backward orbits can only leave through the left wall.

Assume for the sake of contradiction, that an orbit other than the Lagrange homothetic orbit remains in $R_{I}$ for all time $s\ge 0$. Then $\t(s)$ converges monotonically to some limit $\t_\infty\in (\t^*-\pi,-\smfr{\pi}{2}]$ (it  cannot be $\t^*-\pi$ since all points in this plane but not on the Lagrange orbit have $w>0$ so they initially move to the right).  The omega limit set is either empty or else it must be a nonempty invariant subset of $\{\t = \t_\infty\}$.  However, these planes don't contain any nontrivial invariant sets.  So the omega limit set must be empty which is only possible if $\t_\infty=-\smfr{\pi}{2}$ and if  the orbit leaves every compact subset of the energy manifold.  To show that this is impossible, let $\l = \sqrt{2r+v^2}$ so that the energy equation becomes
$$\smfr12 \l^2 \cos^2\t +   \smfr18 w^2(1+\sin^2\t)^2= W(\t).$$
We will show that $\l(s)$ remains bounded to complete the argument.  From (\ref{eq_isosceles}) we find
$\l\l' = \smfr18 vw^2(1+\sin^2\t)^2$.  Since $\t(s)$ is increasing, we can reparametrize by $\t$ to get
$\fr{d\l}{d\t} = \smfr12 \frac{vw}{\l}$ and since $|v|\le \l$ we have
\begin{equation}\label{eq_lambdaestimate}
|\fr{d\l}{d\t}|  \le \smfr12 w.
\end{equation}
As $w(s)$ is bounded by the energy relation, we get a bound for $|\fr{d\l}{d\t}|$.  It follows that $\l(s)$ is bounded along the part of the orbit in $R_{I}$ as required.

The same argument applies to backward-time orbits in $R_{II}$, showing that they must reach the right wall.  The argument for forward orbits in $R_{III}$ is easier since this region is compact.   The omega limit set of an orbit remaining in $R_{III}$ for all time would have to be a nonempty invariant set in a plane $\t = \t_\infty \in (-\smfr{\pi}{2},0]$.  The only invariant sets are the Eulerian restpoints and homothetic orbit and so the omega limit set would have to be one of the restpoints.  However,  the stable manifold of $E_{-}$ is just the restpoint itself and the homothetic orbit, so the orbit must be in the stable manifold of $E_{+}$ which, it so happens, is contained in the collision manifold. 
\end{proof}

In addition to this lemma about region-crossing,  it will be necessary to use some facts about the flow on the isosceles triple collision manifold.  Setting $r=0$ in (\ref{eq_isosceles}) gives the dynamics on the triple collision manifold.  The energy equation (\ref{eq_energyisosceles}) gives
$$\smfr12 v^2\cos^2\t +  \smfr18 w^2(1+\sin^2\t)^2-W(\t) = 0.$$
Using this to eliminate $w$ gives a flow on part of the $(\t, v)$ plane satisfying $v^2 \le 2 V(\t)$.  This is shown in figure~\ref{fig_collisionwu} for the case $m_3=1$.  An important property of the flow is that it is gradient-like with respect to the variable $v$.  Indeed, using the energy equation gives $v' = \smfr18 w^2(1+\sin^2\t)^2\ge 0$ and it can be shown that $v(s)$ is strictly increasing except at the restpoints.
The restpoints $L_{\pm}, L'_{\pm}$ are saddle points.  

Certain properties of their stable and unstable manifolds will be used in the proof.  Let $\g, \g'$ denote the branches of $W^u(L_{-}), W^u(L'_{-})$ in  $\{w>0\}$ (bold lines in figure~\ref{fig_collisionwu}).
 The key properties needed to complete the existence proof refer to the intersections of these branches with the syzygy lines at $\t = -\fr{\pi}{2}$ and $\t = 0$.  We require that $\g$ remains in $\{w>0\}$ at least until it crosses these two syzygy lines and that  the intersection points are $(\t,v) = (-\fr{\pi}{2},v_1)$ where $v_1<0$ and $(\t,v) = (0,v_2)$ with $v_2>0$.   Furthermore we require that $\g'$ remains in $\{w>0\}$ at least until it crosses $\{\t=0\}$ at a point $(\t,v) = (0,v_3)$ with $v_3<0$.  Call a mass parameter $m_3$ {\em admissible} if these hypotheses hold.  The figure indicates that $m_3=1$ is admissible and the next lemma guarantees that this is so.  The proof will be given later.

\begin{figure}
\scalebox{.6}{\includegraphics{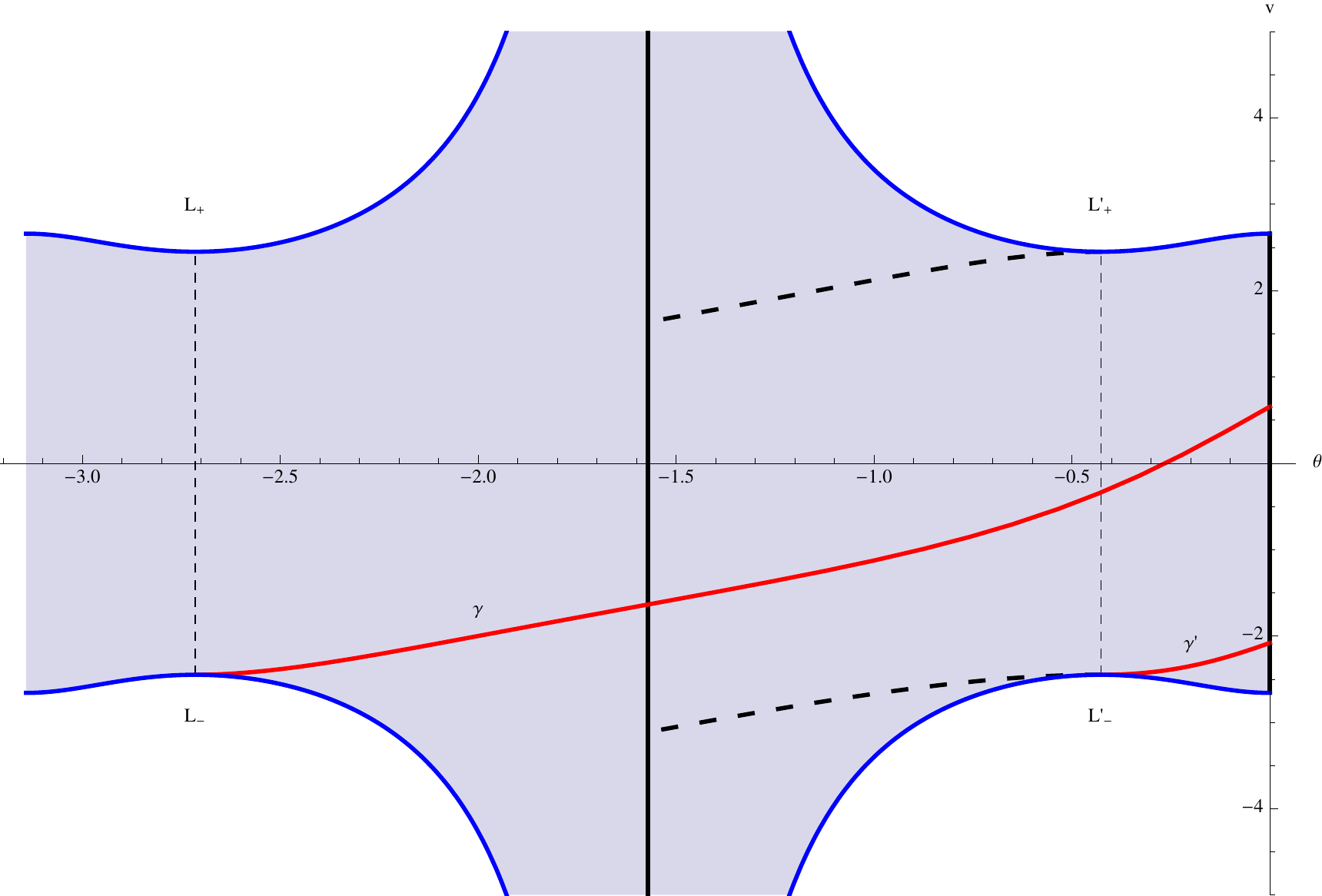}}
\caption{Flow on the $w>0$ part of the collision manifold in the equal mass case (the ``floor'' in figure~\ref{fig_energymanifold}). The crucial branches $\g, \g'$ of $W^u(L_{-}), W^u(L'_{-})$ are shown.  $\g$ intersects the line $\t = -\fr{\pi}{2}$ with $v<0$ and  $\t=0$ with $v>0$ and $\g'$ intersects $\t=0$ with $v<0$.  Two shorter branches of stable manifolds which play a role in the proof are also shown.}\label{fig_collisionwu}
\end{figure}

\begin{lemma}\label{lemma_admissiblemasses}
There is a nonempty open set of admissible masses which including the equal mass value $m_3=1$.
\end{lemma}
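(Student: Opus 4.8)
\textbf{Proof proposal for Lemma \ref{lemma_admissiblemasses}.}

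The plan is to split the statement into a soft \emph{openness} part, valid for all masses, and a single \emph{base-case verification} at $m_3=1$. First I would record that every clause in the definition of admissibility is an open condition on the flow of the collision manifold $\{r=0\}$. Setting $r=0$ in (\ref{eq_isosceles}) and using the energy relation (\ref{eq_energyisosceles}) gives, on the half $\{w>0\}$,
$$\theta' = \smfr14 w(1+\sin^2\theta)^2>0, \qquad v' = \smfr18 w^2(1+\sin^2\theta)^2>0,$$
so both $\theta$ and $v$ increase strictly along $\gamma$ and $\gamma'$ as long as $w>0$. Hence each branch is a graph $v=v(\theta)$, and dividing the two equations it solves the scalar ODE
$$\frac{dv}{d\theta}=\frac{w}{2}=\frac{|\cos\theta|}{\sqrt2}\sqrt{V(\theta)-\smfr12 v^2},$$
with $w$ eliminated through (\ref{eq_energyisosceles}); the initial data are $v=-\sqrt{2V(\theta^*)}$ at $\theta=\theta^*-\pi$ for $\gamma$ and at $\theta=-\theta^*$ for $\gamma'$ (note $V$ is even and $\pi$-periodic, so $V(\theta^*-\pi)=V(-\theta^*)=V(\theta^*)$). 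Admissibility then reads: the graph of $\gamma$ stays in the strip $v^2<2V$ on $[\theta^*-\pi,0]$ with $v<0$ at $\theta=-\pi/2$ and $v>0$ at $\theta=0$, and the graph of $\gamma'$ stays in the strip on $[-\theta^*,0]$ with $v<0$ at $\theta=0$. All of these are strict inequalities and transversal crossings, so the admissible set will be open once continuity in $m_3$ is in hand.

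For the continuity, observe that $L_-$ and $L'_-$ are hyperbolic saddles of the two-dimensional collision-manifold flow (the isosceles analogue of Proposition \ref{prop_linearization}), depending real-analytically on $m_3$; their one-dimensional unstable manifolds, hence the branches $\gamma,\gamma'$ and the crossing values $v_1,v_2,v_3$, therefore depend continuously on $m_3$ over the compact $\theta$-intervals in question, by the standard dependence of stable/unstable manifolds on parameters. Combined with the previous paragraph, this shows the set of admissible $m_3$ is open, so it suffices to verify that $m_3=1$ is admissible.

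It remains to check the sign conditions at $m_3=1$. Here I would use the reflection symmetries of the collision flow $\sigma:(\theta,v,w)\mapsto(-\pi-\theta,v,-w)$ and $\sigma_0:(\theta,v,w)\mapsto(-\theta,v,-w)$ — genuine symmetries for every $m_3$, since $W(\theta)=\cos^2\theta\,V(\theta)$ depends only on $\sin^2\theta,\cos^2\theta$, and a direct substitution checks that each preserves the restriction of (\ref{eq_isosceles}) to $\{r=0\}$ without reversing time — to organize the phase portrait of figure \ref{fig_collisionwu}: $\sigma$ interchanges $L_-$ and $L'_-$ and realizes the left--right symmetry of the picture about the binary-collision line $\theta=-\pi/2$. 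With the picture reduced to the single scalar ODE above, the short run defining $\gamma'$ (from $-\theta^*$ to $0$) and the first leg of $\gamma$ (from $\theta^*-\pi$ to $-\pi/2$) are both handled by the crude comparison bound $0\le \frac{dv}{d\theta}\le \frac{|\cos\theta|}{\sqrt2}\sqrt{V(\theta)}$, whose $\theta$-integral is finite (the integrand stays bounded as $\theta\to-\pi/2$ because $\cos^2\theta\,V\to\sqrt2$) and bounds the total increase of $v$, keeping $v<0$ there. The one genuinely quantitative fact is the sign change $v(-\pi/2)<0<v(0)$ along $\gamma$; I would establish it either by a rigorous interval-arithmetic integration of the scalar ODE with the explicit $V(\theta)$ at $m_3=1$, or by matching explicit two-sided quadrature bounds across $[-\pi/2,0]$.

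\textbf{Main obstacle.} The openness is soft and the estimates for $\gamma'$ and the first leg of $\gamma$ are routine one-sided quadratures; the real difficulty is the non-perturbative sign change $v(-\pi/2)<0<v(0)$ on $\gamma$. Proving $v(0)>0$ requires a \emph{lower} bound on $dv/d\theta$ over $[-\pi/2,0]$, hence an upper bound on $v$ that must be propagated simultaneously, so the two sides are coupled and no soft argument suffices. This step — a sharp two-sided enclosure of a specific trajectory tracked across an $O(1)$ range of $\theta$ — is where I expect the bulk of the work to lie.
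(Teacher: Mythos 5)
Your overall architecture --- soft openness in $m_3$ plus a base-case verification at $m_3=1$ --- is the same as the paper's, and your openness paragraph is fine (indeed more detailed than the paper, which simply asserts that the admissible masses form an open set). The problem is that the entire mathematical content of the lemma is the base case, and you never prove it: the sign conditions $v_1<0$, $v_2>0$, $v_3<0$ at $m_3=1$ are deferred to an unexecuted ``rigorous interval-arithmetic integration'' or ``two-sided quadrature bounds.'' The paper's proof \emph{is} precisely these estimates, carried out by hand: for $\gamma'$, the bound $V\le V(0)=5/\sqrt2$ on $[-\theta^*,0]$ gives $v_3\le -v^*+\sqrt{2V(0)}\,\theta^*\le -1.3$; for $\gamma$, a two-stage estimate, splitting at $\theta=-\tfrac{3\pi}{4}$ and retaining the $-v^2$ term via $\sqrt{8-v^2}$, gives $v_1<0$; and then a comparison of (\ref{eq_dvdtheta}) with the exactly integrable minorant $2|\cos\theta|\sqrt{v^{*2}-v^2}/(1+\sin^2\theta)$ (using $2V\ge v^{*2}$ and the primitive $2\arctan(\sin\theta)$, with $\arctan(\sqrt2-1)=\pi/8$) yields first $v(-\tfrac{\pi}{2})\ge -v^*/\sqrt{2}=-\sqrt3$ and then, arguing by contradiction --- the assumption $v_2\le 0$ itself supplies the upper bound $v^2\le 3$ on $[-\tfrac{\pi}{2},0]$ --- the conclusion $v_2\ge(\tfrac{\pi}{2}-1)\sqrt3>0$. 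As written, your proposal establishes openness of a set it has not shown to be nonempty.

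Two technical points would derail your plan even if executed. First, your scalar ODE is wrong: eliminating $w$ from (\ref{eq_energyisosceles}) with $r=0$ leaves the $\theta$-dependent factor $(1+\sin^2\theta)^{-1}$, which you have replaced by the constant $\tfrac12$; your right-hand side differs from the paper's (\ref{eq_dvdtheta}) by the factor $4/(1+\sin^2\theta)\in[2,4]$, so any enclosure of its solutions says nothing about $\gamma,\gamma'$. (There is, incidentally, a factor-of-$2$ discrepancy between (\ref{eq_dvdtheta}) and what (\ref{eq_isosceles})--(\ref{eq_energyisosceles}) literally give, but under either resolution your equation is incorrect, because the $\theta$-dependence is lost --- and the structure $|\cos\theta|/(1+\sin^2\theta)$, whose primitive is $\arctan(\sin\theta)$, is exactly what makes the paper's comparisons computable in closed form.) Second, you have misplaced the delicacy. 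Your claim that the leg of $\gamma$ from $\theta^*-\pi$ to $-\tfrac{\pi}{2}$ is ``handled by the crude comparison bound'' dropping $v^2$ does not survive inspection: at $L_-$ one has $v^2=2V$, so the true slope vanishes there while the crude bound is of order one, and the paper's own accounting shows how thin the margin is --- $v(-\tfrac{3\pi}{4})\le -1.6$ (a comparison decided by $0.4459$ versus $0.4456$), plus an increase of at most $1.559$ on $[-\tfrac{3\pi}{4},-\tfrac{\pi}{2}]$, i.e.\ $v_1\le -0.04$. Discarding the $-v^2$ term there destroys the estimate. Conversely, the step you single out as requiring a ``sharp two-sided enclosure,'' namely $v_2>0$, is the one the paper settles softly by the exact comparison just described.
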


These lemmas can be applied to follow a subset of $Z$ through $R_I$ to a curve $Z_I$ in the right wall. 
The left endpoint of $Z$ is the brake initial condition with equilateral shape, $\t = \pi -\t^*$.  Call this point $p_0$.  The corresponding orbit is the Lagrange homothetic orbit, so it remains in $R_I$ for all $t\ge 0$ and converges to the restpoint  $L_{-}$.  It follows from lemma~\ref{lemma_regioncrossing}  that $Z\setminus p_0$ can be followed through $R_I$ to the right wall.  It will be important to understand the image curve $Z_I$ whose three-dimensional projection lies in the half-plane $\{(r,\t,v):r\ge 0 ,\t = -\smfr{\pi}{2}\}$. 
Use $(v,r)$ as coordinates in this half-plane  and let $p_I $ be the point $(v,r) = (v_1,0)$ where the branch $\g$ of $W^u(L_{-})$ meets the right wall.

\begin{lemma}\label{lemma_regionI}
Assume $m_3$ is admissible.   Then the image curve $Z_I$ of $Z\setminus p_0$ is a continuous open arc in the right wall of $R_I$.  One end converges to $p_I = (v_1,0)$ and at the other end $|(r,v)|\rightarrow \infty$.
\end{lemma}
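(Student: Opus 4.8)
The plan is to present $Z_I$ as the image of the open interval $(\theta^*-\pi,-\tfrac{\pi}{2})$ under the \emph{first-hit map} $\Psi$ that sends each brake point $p\in Z\setminus p_0$, parametrized by its shape angle $\theta_0$, to the first crossing of its forward orbit with the right wall $\{\theta=-\tfrac{\pi}{2}\}$, recorded in the $(v,r)$ coordinates there. I would establish three facts: (i) $\Psi$ is well defined and continuous, so $Z_I$ is a continuous open arc; (ii) $\Psi(\theta_0)\to p_I=(v_1,0)$ as $\theta_0\to(\theta^*-\pi)^+$; and (iii) $|\Psi(\theta_0)|\to\infty$ as $\theta_0\to(-\tfrac{\pi}{2})^-$.

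For (i), Lemma~\ref{lemma_regioncrossing} gives that $R_I$ is positively invariant and that every orbit except the Lagrange homothetic one exits through the right wall; since $p_0$ is the only brake point on that homothetic orbit, each $p\in Z\setminus p_0$ does reach $\{\theta=-\tfrac{\pi}{2}\}$, so $\Psi$ is defined. I would then check the crossing is transverse: evaluating the energy relation (\ref{eq_energyisosceles}) at $\theta=-\tfrac{\pi}{2}$, where $\cos^2\theta=0$ and $W(-\tfrac{\pi}{2})=\sqrt{2}$, forces $w^2=2\sqrt{2}>0$ there, so $\theta'=\tfrac14 w(1+\sin^2\theta)^2>0$ at every finite crossing point, independent of $(v,r)$. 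Transversality together with smooth dependence on initial conditions makes the first-hit time and point depend continuously on $\theta_0$ on the open interval, so $Z_I$ is a continuous arc; it is open because its two ends are limits that are not attained.

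For (iii), I would reuse the a priori estimate from the proof of Lemma~\ref{lemma_regioncrossing}. Writing $\lambda=\sqrt{2r+v^2}$, that proof yields $|d\lambda/d\theta|\le\tfrac12 w$, while the energy relation bounds $w^2\le 8W(\theta)/(1+\sin^2\theta)^2\le 8W_{\max}$ uniformly on the compact $\theta$-interval of $R_I$. Integrating over the bounded $\theta$-range of $R_I$ (of length $\tfrac{\pi}{2}-\theta^*$) shows $\lambda$ changes by at most a fixed constant $C$ along any orbit in $R_I$. The brake point at $\theta_0$ has $\lambda_0=\sqrt{2V(\theta_0)}$, and $V(\theta_0)\to\infty$ as $\theta_0\to-\tfrac{\pi}{2}$ (binary collision), so at the crossing $\lambda\ge\lambda_0-C\to\infty$, i.e.\ $|(r,v)|\to\infty$.

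The delicate step, and where I expect the main work to lie, is (ii), the behavior near the Lagrange restpoint $L_-$. The orbit of $p_0$ converges to $L_-$, and by admissibility the branch $\gamma$ of $W^u(L_-)$ in $\{w>0\}$ crosses the right wall transversally at $p_I=(v_1,0)$ while staying in $\{w>0\}$. For $\theta_0$ near $\theta^*-\pi$ the brake orbit shrinks nearly homothetically, so by continuous dependence it enters an arbitrarily small neighborhood $U$ of $L_-$ with $\theta$ still near $\theta^*-\pi$, hence before any right-wall crossing; since among brake points only $p_0$ lies in $W^s(L_-)$ (the only brake orbit ending in Lagrange triple collision is the homothetic one), it must eventually leave $U$. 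I would invoke the local hyperbolic structure at the saddle $L_-$ (inclination lemma): as the entry points converge to $W^s_{loc}(L_-)$, the exit points converge to $W^u_{loc}(L_-)$, and because the orbit stays in $\overline{\{w\ge 0\}}$ (positive invariance, as $w'=\cos^2\theta\,V'\ge 0$ on $\{w=0\}\cap R_I$) it shadows the branch $\gamma$ rather than its $w<0$ counterpart. Continuity of the first-hit map in a flow-box along the arc of $\gamma$ from $\partial U$ to its transverse crossing then yields $\Psi(\theta_0)\to p_I$. The main obstacle is making this shadowing uniform all the way from the passage near $L_-$ to the right wall; this is exactly where admissibility is needed, guaranteeing $p_I$ is finite with $v_1<0$ and that $\gamma$ remains in $\{w>0\}$ up to the crossing.
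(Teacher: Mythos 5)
Your proof is correct and takes essentially the same route as the paper's: transversality of the flow at the wall $\{\theta=-\tfrac{\pi}{2}\}$ (where the energy relation forces $w>0$) gives continuity of the first-hit map, shadowing of the unstable branch $\gamma$ past the saddle $L_-$ handles the end converging to $p_I$, and the estimate $|d\lambda/d\theta|\le\tfrac{1}{2}w$ from the proof of Lemma~\ref{lemma_regioncrossing} handles the unbounded end. The paper states these three steps more tersely; your version merely supplies details it leaves implicit (the explicit value $w^2=2\sqrt{2}$ at the wall and the inclination-lemma bookkeeping near $L_-$).
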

\begin{proof}
The initial curve $Z\setminus p_0$ is a continuous open arc in the zero velocity curve from $p_0$ to $(r,\t,v,w) = (\infty, -\smfr{\pi}{2},0,0)$.   The flow across the right wall of $R_I$ is transverse since $w>0$ there.  So $Z_I$ is a continuous open arc.
Initial conditions near $p_0$ on $Z$ will follow the Lagrange homothetic orbit to a neighborhood of the restpoint $L_{-}$ and then follow the branch $\g$ of the unstable manifold to meet the right wall near the point $p_I$.

Next consider an initial point near the other end of the curve.  Initially the quantity  $\l = \sqrt{2r+v^2}$ is large and $\t\approx -\smfr{\pi}{2}$.  It follows from (\ref{eq_lambdaestimate}) that $\l$ is still large when the orbit meets the right wall.
\end{proof}

Now part of the curve $Z_{I}$ will be followed across region $R_{II}$.  Unfortunately, this region is not positively invariant (solutions can leave by $w$ becoming negative, i.e., $\theta$ begins to decrease).  It turns out, however, that part of the curve $Z_{I}$ is trapped  inside $R_{II}$ by an invariant surface, namely, the stable manifold $W^s(L'_-)$.   From the linearization at the restpoint, it follows that $W^s(L'_-)$ has dimension 2.  One of the orbits in the stable manifold is the Lagrange homothetic orbit which lies in the right wall of $R_{II}$ connectin $L'_+$ to $L'_-$.   It is known that $W^u(L'_+)$ and $W^s(L'_-)$ intersect transversely along this orbit \cite{SimoLLibre}.  $W^s(L'_-)$ also contains two orbits in the collision manifold, one of which lies in region $R_{II}$.   The one-dimensional manifold $W^s(L'_+)$ also contains an orbit in $R_{II}$.  These two branches of stable manifolds are shown as dashed curves in figure~\ref{fig_collisionwu}.

Consider the  ``quadrant'' of the surface $W^s(L'_-)$  in region $R_{II}$.  One edge is the orbit in the collision manifold just described and the other is the Lagrange homothetic orbit in the right wall.  It follows from  lemma~\ref{lemma_regioncrossing} that  with the exception of the homothetic orbit itself, orbits in this quadrant can be followed backward under the flow to reach the left wall of the region.  The intersection of the surface and the wall will be a curve.  One endpoint of the curve arises from the branch of  $W^s(L'_-)$ in the collision manifold.   Since $v$ is decreasing for backward orbits in the collision manifold, this endpoint will be of the form $(v,r) = (v_0,0)$ with $v_0<-v^*<v_1 <0$.    To find the other endpoint, note that backward orbits in the quadrant near the homothetic orbit will follow the homothetic orbit back near the restpoint $L'_+$ and then follow the branch of $W^s(L'_+)$  in $R_{II}$.  This branch of stable manifold is related by symmetry to the branch $\g$ of $W^u(L_-)$.  Hence it meets the left wall of $R_{II}$ at $(v,r) = (-v_1,0)$, $-v_1>0$.  

So the surface $W^s(L'_-)$ intersects the left wall of $R_{II}$ in a curve connecting the two points $(v_0,0), (-v_1,0)$ in the collision manifold but otherwise lying in $\{r>0\}$.  It follows from lemma~\ref{lemma_regionI} that $Z_{I}$ crosses this curve.  Let $Z'_{I}$ denote the part of $Z_{I}$ below the stable manifold from $(v,r) = (-v_1,0)$ to the first intersection, call it $q_I$, with $W^s(L'_-)$.  Points in $Z'_{I}\setminus q_I$ can be followed forward across region $R_{II}$ to its right wall.

Indeed if $p\in Z'_{I}\setminus q_I$, the forward orbit of $p$ remains in the part of $R_{II}$ below the invariant manifold $W^s(L'_-)$.  By lemma~\ref{lemma_regioncrossing}, it either reaches the right wall or converges to $L'_{\pm}$.   By construction, the only point of $Z'_{I}$ in $W^s(L'_{\pm})$ is $q_I$.  Points of $Z'_{I}$ near $q_I$ will reach the right wall of $R_{II}$ near $L'_-$.  The other endpoint  of $Z'_{I}$  is the point $p_I$ with $(v,r) = (v_1,0)$ in the branch of $W^u(L_-)$ described in the definition of admissible mass.  This point continues to follow that branch to its intersection point with the right wall.
Thus the image of $Z'_{I}\setminus q_i$ is an open arc $Z_{II}$ in the right wall  of $R_{II}$ connecting $L'_-$ to  $\g$  and otherwise lying in $\{r>0\}$.

Since $Z_{II}$ does not intersect the Lagrange homothetic orbit or the stable manifold of $E_+$, lemma~\ref{lemma_regioncrossing} shows that it can be followed across region $R_{III}$ to the right wall to form an open arc $Z_{III}$.   The endpoints of $Z_{III}$ are at the point $(v,r) =(v_2,0)$ and  $(v_3,0)$ where the branches of unstable manifolds $\g$ and $\g'$ cross the wall.  Since $m_3$ is admissible, we have $v_3<0<v_2$, so the is at least one point of $Z_{III}$ with $v=0$.  Thus there is a point $p$ of the zero velocity curve $Z$ which can be followed through all three regions to reach $\t=0$ with $v=0$ (the curve in figure~\ref{fig_energymanifold}).  Using the reflection symmetries we get a periodic brake orbit, completing the proof of Theorem~\ref{theorem_isosbrake}.

\subsection{Proof of Lemma~\ref{lemma_admissiblemasses}}
The behavior of the stable and unstable manifolds on the isosceles triple collision manifold as the mass ratio $m_3$ varies has been studied using a combination of analytical and numerical methods by Sim\'o (\cite{Simo}).  His results imply that the admissible masses, $m_3$, are those in the open interval  $(0,2.6620)$.  In this section we will only prove that $m_3=1$ is admissible.  The admissible masses form an open set so they will then include some open interval around 1.  
Using $(\t,v)$ as coordinates on the collision manifold and using the energy equation with $r=0$ in (\ref{eq_isosceles}) gives
\begin{equation*}
\begin{aligned}
v' &= \smfr18 w^2(1+\sin^2\t)^2\\
\t'  &= \smfr14 w(1+\sin^2\t)^2.
\end{aligned}
\end{equation*}
Any orbit segment with $w>0$ can be parametrized by $\t$ and we have
\begin{equation}\label{eq_dvdtheta}
\fr{dv}{d\t} =  \smfr12 w = \fr{2\sqrt{2W(\t)-v^2\cos^2\t}}{1+\sin^2\t} = \fr{2|\cos \t|\sqrt{2V(\t)-v^2}}{1+\sin^2\t}.
\end{equation}
Let $m_3=1$.  We need to follow the $w>0$ branches $\g, \g'$ of the unstable manifolds $W^u(L_{-}), W^u(L'_{-})$. These are 
solutions of (\ref{eq_dvdtheta}) beginning at $L_-, L'_-$ whose coordinates are $(\t,v) = (\pi-\t^*, -v^*),  (-\t^*, -v^*)$ where $v^*=\sqrt{2V(\t^*)} = \sqrt{6}$.  

The branch $\g'$ is short and easy to understand using crude estimates.  Let $v(\t)$ denote the corresponding solution of (\ref{eq_dvdtheta}).  We have  $v(-\t^*) = -v^* = -\sqrt{6}$ and we need to show that $v(0)<0$.  In the interval $[-\t^*,0]$ we have 
$V(\t) \le V(0) =  \smfr{5}{\sqrt{2}}$.  From (\ref{eq_dvdtheta}) we get $\fr{dv}{d\t} \le \sqrt{2V(0)}$ and
$$v(0) \le -v^* + \sqrt{2V(0)}\, \t^*.$$
For $m_3=1$ we have $\t^* = \sin^{-1}(\sqrt{2}-1)$.  Evaluating these constants numerically gives $v(0) \le -1.3 <0$ as required.

The branch $\g$ requires more work.  First, it must be shown that $\g$ remains in the $w>0$ part of the energy manifold at least until it crosses the line $\t=0$.  Thus is equivalent to $v(\t)$ being defined for $\t\in [\t^*-\pi,0]$.  Then we also require that  that $v_1=v(-\smfr{\pi}{2})<0$ and $v_2 = v(0)>0$.  Recall that solutions can only leave $\{w>0\}$ in region $R_{II}$.  If  $v(\t)$ is defined for $\t\in [\t^*-\pi,-\smfr{\pi}{2}]$ and if $v(-\smfr{\pi}{2})<0$ then in $R_{II}$, $\g$ is trapped between two branches of stable manifolds and cannot reach the boundary curve $\{w=0\}$ (see figure~\ref{fig_collisionwu}).  Thus it suffices to show that $v(-\smfr{\pi}{2})<0$ and $v(0)>0$.

To show that $v(-\smfr{\pi}{2})<0$.  Recall that the initial value is $v(\t^*-\pi) = -v^*$ where 
$v^* = \sqrt{6} \approx 2.44949$.   First we show $v(-\smfr{3\pi}{4})\le -1.6$ then we show that the change of $v(\t)$ on 
$[-\smfr{3\pi}{4},-\smfr{\pi}{2}]$ is at most $1.56$.  For the first part, use the estimate $V(\t)\le V(-\smfr{3\pi}{4}) <4$ for 
$\t\in[\t^*-\pi,-\smfr{3\pi}{4}]$.   From (\ref{eq_dvdtheta}) we have
$$
\fr{dv}{d\t} \le \fr{2|\cos \t|\sqrt{8-v^2}}{1+\sin^2\t}.
$$
If $v(-\smfr{3\pi}{4})> -1.6$ we would have
$$\int_{-v^*}^{-1.6}\fr{dv}{\sqrt{8-v^2}} < \int_{\t^*-\pi}^{-\smfr{3\pi}{4}}\fr{2|\cos\t|\,d\t}{1+\sin^2\t}.$$
Both sides can be integrated exactly and the resulting formulas evaluated numerically to arrive at the contradiction
$0.4459<0.4456$.  For $\t\in[-\smfr{3\pi}{4},-\smfr{\pi}{2}]$ we use the crude approximation
$$\fr{dv}{d\t} \le \fr{2\sqrt{2W(\t)}}{1+\sin^2\t}.$$
Symbolic differentiation of the right-side shows that it is strictly decreasing on $[-\pi,-\smfr{\pi}{2}]$ which implies that its integral can be easily bounded using upper and lower Riemann sums.  In particular, the upper sum  for $[-\smfr{3\pi}{4},-\smfr{\pi}{2}]$ with
$100$ steps gives an upper bound of $1.559<1.56$ for the change of $v$ over this interval.

Finally, we show that $v(0)>0$.   From (\ref{eq_dvdtheta}) we have
$$
\fr{dv}{d\t} \ge \fr{2|\cos \t|\sqrt{v^{*2}-v^2}}{1+\sin^2\t}.
$$
This can be used to show that  $v(-\smfr{\pi}{2}) \ge -\smfr{v*}{\sqrt{2}}$.  If this were false we would have
$$\int_{-v^*}^{-\smfr{v*}{\sqrt{2}}}\fr{dv}{\sqrt{v^{*2}-v^2}} > \int_{\pi-\t^*}^{-\smfr{\pi}{2}}\fr{2|\cos\t|\,d\t}{1+\sin^2\t}.$$
The left side is $\smfr{\pi}{4}$ and using $\sin(\t^*) = -\sin(\pi-\t^*) = \sqrt{2}-1$ one finds that the right side is also $\smfr{\pi}{4}$, a contradiction.  Therefore, $v(\smfr{\pi}{2}) \ge -\smfr{v*}{\sqrt{2}}$.

Since $v(\t)$ is increasing we have $v(\t)\ge v(-\smfr{\pi}{2}) \ge -\smfr{v*}{\sqrt{2}} = -\sqrt{3}$ for $\t\in [-\smfr{\pi}{2},0]$.  If we had $v(0)\le 0$ then we would have $v(t)^2\le 3$ for $\t\in [-\smfr{\pi}{2},0]$. 
Then we would have
$$
\fr{dv}{d\t} \ge \fr{2|\cos \t|\sqrt{v^{*2}-v(t)^2}}{1+\sin^2\t} \ge \fr{2|\cos \t|\sqrt{3}}{1+\sin^2\t}.
$$
Integration shows that $v(\t)$ increases by at least $\smfr{\pi}{2}\sqrt{3}$ for  $\t\in [-\smfr{\pi}{2},0]$ giving the estimate
$$v(0) \ge v(-\smfr{\pi}{2}) + \smfr{\pi}{2}\sqrt{3} \ge ( \smfr{\pi}{2}-1) \sqrt{3} >0$$
a contradiction to the assumption that $v(0)\le 0$.  Hence $v(0)>0$ as required.

\section{Acknowledgements.}
The authors would like to acknowledge helpful discussions and correspondences with Alan Weinstein and Alain Chenciner. 
We would also like to thank Greg Laughlin for pointing us to the story of the Pythagorean
3-body problem.  

\bibliographystyle{amsplain}

\end{document}